\newcommand{\comment}[1]{}
\def\S{{{\mathbb S}^1}}
\def\ad#1{{\scriptstyle \left[ \frac{1}{#1} \right]}}
\def\Ncy{N^{\mathrm{cy}}}
\def\tNcy{\widetilde{N}^{\mathrm{cy}}}
\def\smsh{\wedge}
\def\cdh{{\mathrm{cdh}}}
\def\zar{{\mathrm{zar}}}
\def\triqui{\vartriangleleft}
\def\oo{\otimes}
\newcommand{\Q}{\mathbb{Q}}
\newcommand{\bH}{{\mathbb{H}}}
\newcommand{\C}{\mathbb{C}}
\newcommand{\Z}{\mathbb{Z}}
\newcommand{\perf}{\operatorname{perf}}
\def\cA{\mathcal A}
\def\cB{\mathcal B}
\def\cC{\mathcal C}
\def\cE{\mathcal E}
\def\cF{\mathcal F}
\def\cI{\mathcal I}
\def\cO{\mathcal O}
\def\cMpctf{{\mathcal M}_{\text{pctf}}}
\def\sn{\mathrm{sn}}
\def\cK{\mathcal K}
\def\cKH{\mathcal KH}
\def\Spec{\operatorname{Spec}}
\def\Sch{\operatorname{Sch}}
\def\MSpec{\operatorname{MSpec}}
\def\map#1{\, {\buildrel #1 \over \lra}\, }
\def\lmap#1{\,{\buildrel #1 \over \longleftarrow}\,}
\def\lra{\longrightarrow}
\def\onto{\twoheadrightarrow}
\def\tOmega{\widetilde{\Omega}}
\DeclareMathOperator*{\colim}{colim}
\newcommand{\fc}{{\mathfrak c}}
\newcommand{\fp}{{\mathfrak p}}
\newcommand{\fm}{{\mathfrak m}}
\numberwithin{equation}{section}
\theoremstyle{plain} 
\newtheorem{thm}[equation]{Theorem}
\newtheorem{cor}[equation]{Corollary}
\newtheorem{lem}[equation]{Lemma}
\newtheorem{prop}[equation]{Proposition}
\newtheorem{substuff}{Remark}[equation]
\newtheorem{DilationThm}[equation]{Dilation Theorem}
\theoremstyle{definition}
\newtheorem{defn}[equation]{Definition}
\newtheorem{ex}[equation]{Example}
\theoremstyle{remark}
\newtheorem{rem}[equation]{Remark}
\newtheorem{subrem}[substuff]{Remark} 
\newtheorem{subex}[substuff]{\bf Example} 
\newtheorem{notation}[equation]{Notation}
\begin{document}

\title[$K$-theory of toric schemes in mixed characteristic]
{The $K$-theory of toric schemes over regular rings of mixed characteristic}
\date{\today}

\author{G. Corti\~nas}
\thanks{Corti\~nas' research was supported by Conicet
and partially supported by grants UBACyT 20021030100481BA, PIP
112-201101-00800CO, PICT 2013-0454, and MTM2015-65764-C3-1-P (Feder funds).}
\address{Dept.\ Matem\'atica-Inst.\ Santal\'o, FCEyN,
Universidad de Buenos Aires,
Ciudad Universitaria, (1428) Buenos Aires, Argentina}
\email{gcorti@dm.uba.ar}

\author{C. Haesemeyer}
\thanks{Haesemeyer's research was partially supported by the University of
  Melbourne Research Grant Support Scheme and ARC Discovery Project grant
  DP170102328} 
\address{School of Mathematics and Statistics, University of Melbourne,
Victoria 3010, Australia}
\email{christian.haesemeyer@unimelb.edu.au}

\author{Mark E. Walker}
\thanks{Walker's research was partially supported by a grant from the
Simons Foundation (\#318705).}
\address{Dept.\ of Mathematics, University of Nebraska - Lincoln,
  Lincoln, NE 68588, USA}
\email{mark.walker@unl.edu}

\author{C. Weibel}
\thanks{Weibel's research was supported by NSA and NSF grants.}
\address{Dept.\ of Mathematics, Rutgers University, New Brunswick,
NJ 08901, USA} \email{weibel@math.rutgers.edu}

\begin{abstract}
We show that if $X$ is a toric scheme over a 
regular commutative ring $k$
then the direct limit of the $K$-groups of $X$ taken over any
infinite sequence of nontrivial dilations is homotopy invariant.
This theorem was previously known for regular commutative rings 
containing a field. The affine 
case of our result was conjectured by Gubeladze.
We prove analogous results when $k$ is replaced by an
appropriate $K$-regular, not necessarily commutative $k$-algebra. 
\end{abstract}

\subjclass[2000]{14F20,19E08, 19D55}

\keywords{Toric variety, algebraic $K$-theory, monoid scheme,
topological Hochschild homology}

\maketitle

\section*{Introduction}\label{introduction}

Let $A = (A, \cdot)$ be a commutative monoid. For each integer $c\ge2$,
 the $c$-th power map $\theta_c:A\to A$, $\theta_c(a)=a^c$, is
an endomorphism of $A$; it is called a {\it dilation}.  
For any ring $k$, $\theta_c$ induces an endomorphism
of the monoid ring $k[A]$, its $K$-theory $K_*(k[A])$, and its
homotopy $K$-theory $KH_*(k[A])$.

The affine {\it Dilation Theorem} says that the monoid of dilations
acts nilpotently on the reduced $K$-theory of $k[A]$, at least
when $A$ is a submonoid of a torsionfree abelian group,
$A$ has no nontrivial units and $k$ is an appropriately regular ring.  

\begin{DilationThm}\label{Main-intro}
Let $A$ be a submonoid of a torsionfree abelian group.
Assume that $A$ has no non-trivial units. Let $\fc = (c_1,c_2,\dots)$
be a sequence of integers with $c_i \ge2$ for all $i$, and let
$\Lambda$ be either
\begin{enumerate}
\item[a)] a regular commutative ring; 
\item[b)] a commutative $C^\ast$-algebra;
\item[c)] an associative regular ring that admits the structure of a  flat algebra over a 
regular commutative ring $k_0$ of finite Krull dimension; 
\item[d)] an associative ring that is $K$-regular and that admits the structure of a flat algebra over a 
regular commutative ring $k_0$ of finite Krull dimension having infinite residue fields; or
\item[e)] an associative ring that is $K$-regular and that admits the  structure of a flat algebra over a  regular commutative ring $k_0$ of
finite Krull dimension such that for every prime $\fp\subset k_0$ having finite residue field, the ring $\Lambda\oo_{k_0}k'$ is $K$-regular for every
\'etale $(k_0)_{\fp}$-algebra $k'$.
\end{enumerate}
Then the canonical map is an isomorphism:
\begin{equation}\label{map:intromain}
K_*(\Lambda) \map{\cong} \varinjlim\nolimits_{\theta_{c}} K_*(\Lambda[A]).
\end{equation}
\end{DilationThm}

Recall that an associative ring $\Lambda$ is called (right) \emph{regular} if it is (right) Noetherian and every finitely generated
(right) $\Lambda$ module has finite projective dimension. It is
called {\em   $K$-regular} if the canonical map $K_*(\Lambda) \to K_*(\Lambda[x_1, \dots, x_n])$ is an isomorphism for all $n \geq 0$.      
Also recall that, up to canonical isomorphism, a (unital) commutative $C^*$-algebra is the ring $C(T)$ of complex-valued continuous functions on a compact Hausdorff space $T$. All rings
 considered in this article are unital; in particular, nonunital $C^*$-algebras are not considered.

Case (a) of Theorem \ref{Main-intro} verifies a conjecture of Gubeladze (\cite[1.1]{Gu05}).

Gubeladze verified the conjecture for $\Q$-algebras in \cite{Gu05} 
and \cite{Gu08}; the conjecture was verified for $k$ of positive 
characteristic by the authors in \cite{chww-p}. Gubeladze also verified the 
conjecture when $A$ is a ``simplicial'' monoid for all commutative regular rings $k$
 in \cite{Gu03}. Further, his result \cite[Theorem 3.2.2]{Gu94} implies that if $A$ is simplicial and $k$ is a not necessarily commutative $K$-regular ring, then the map
 \eqref{map:intromain}  is an isomorphism whenever the sequence $\fc$ is constantly equal to a prime number.

As we showed in \cite{chww-monoid}, it is useful to pass from
abelian monoids to pointed abelian monoids (adding an element `0'),
and to generalize even further to {\it monoid schemes}, associating the
affine monoid scheme $\MSpec(A)$ to a pointed monoid $A$.
A monoid scheme $X$ has a {\it $k$-realization} $X_k$ over any
commutative ring $k$; if $X=\MSpec(A)$ then $X_k$ is $\Spec(k[A])$.
Again there are dilation maps $\theta_c: X\to X$, defined
locally as the $c$-th power map on affine open subschemes, and $\theta_c$
induces dilations of both $X_k$ and its $K$-theory.

Even if $\Lambda$ is a noncommutative ring, we can still make sense
out of $K(X_\Lambda)$, although the scheme $X_\Lambda$ is not defined. If $X=\MSpec(A)$ then $K(X_\Lambda)$ is 
$K(\Lambda[A])$; in general, the spectrum $K(X_\Lambda)$ may either be defined 
using Zariski descent on $X$,
or equivalently as the $K$-theory of the dg category $\perf(X_k)\oo_k^L\Lambda$;
see Example \ref{ex:KHC} for details. 

Theorem \ref{intro-Dilation} below is the {\it Dilation Theorem} for
monoid schemes; the hypotheses on $X$ are satisfied whenever it is a toric
monoid scheme or, more generally, a partially cancellative torsionfree (pctf)
monoid scheme of finite type. (The definitions of all these
terms are recalled below, at the end of this introduction.) They are
also satisfied by $X=\MSpec(A)$ when $A$ is a monoid of finite type
satisfying the assumptions  in Theorem \ref{Main-intro}. 
Observe also
that since every abelian monoid is the filtered colimit of its
finitely generated submonoids, and since $K$-theory commutes with such
colimits, the finitely generated case of Theorem \ref{Main-intro}
implies the general case.  We remark also that for $A$ and $\Lambda$ as in
Theorem \ref{Main-intro}, $K_*(\Lambda)=KH_*(\Lambda)=KH_*(\Lambda[A])$. 
Thus Theorem \ref{Main-intro} follows from Theorem \ref{intro-Dilation}.

\begin{thm}\label{intro-Dilation}
Let $X$ be a separated, partially cancellative, torsionfree monoid scheme of
finite type, let $\fc=(c_1,c_2,\dots)$ be a sequence of integers with $c_i \ge2$ for all $i$ and
let $\Lambda$ be an associative ring satisfying one of (a) through (e) in Theorem \ref{Main-intro}. 
Then the canonical map is an isomorphism:
\begin{equation}\label{map:dilate}
\varinjlim\nolimits_{\,\theta_{c}} K_*(X_\Lambda) \map{\cong}
\varinjlim\nolimits_{\,\theta_{c}} KH_*(X_\Lambda).
\end{equation}
\end{thm}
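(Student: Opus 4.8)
The plan is to study the homotopy fiber $\cF(X_\Lambda)$ of the natural transformation $K(X_\Lambda)\to KH(X_\Lambda)$ and to prove that it is annihilated in the colimit, i.e.\ that $\varinjlim_{\theta_c}\pi_*\cF(X_\Lambda)=0$; since the telescope $\varinjlim_{\theta_c}$ is exact, this is equivalent to \eqref{map:dilate} by the long exact sequence of the fibration. Because $X$ is separated of finite type, it admits a finite cover by affine monoid schemes $\MSpec(A_i)$ with each $A_i$ finitely generated, pctf and torsionfree, and each dilation $\theta_c$ preserves this cover (it is the $c$-th power map, which respects localizations $A\to A_f$, $f\mapsto f^c$). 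Thus $\theta_c$ acts compatibly on the Zariski Mayer--Vietoris descent tower for $\cF$. Since filtered colimits commute with the finite homotopy limits computing that tower, it suffices to prove $\varinjlim_{\theta_c}\pi_*\cF(\MSpec(A)_\Lambda)=\varinjlim_{\theta_c}\pi_*\cF(\Lambda[A])=0$ in the affine case, for $A$ finitely generated, pctf and torsionfree.

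The next step is to identify $\cF$ in a way that exposes the weight grading by $A^{\mathrm{gp}}$. First, under each of the hypotheses (a)--(e) one has $KH\simeq K^{\cdh}$, so that $\cF$ is the cdh-fiber $\mathrm{fib}(K\to K^{\cdh})$; verifying this input is precisely where the five cases differ, requiring $K$-regularity, the flatness of $\Lambda$ over $k_0$, and (in the $C^\ast$ case) the homotopy $K$-regularity of $C(T)$. Second, the cdh-fiber of $K$ is identified, via the cyclotomic trace and the relative theorem of Dundas--Goodwillie--McCarthy, with the cdh-fiber of $\mathrm{TC}$, and rationally with that of $\mathrm{HC}$. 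The payoff is that $\mathrm{THH}$, $\mathrm{TC}$ and $\mathrm{HC}$ of $\Lambda[A]$ are computed from the cyclic nerve $\Ncy(A)$ and therefore split as a direct sum over weights $w\in A^{\mathrm{gp}}$; the reduced, positive-weight part is exactly what measures the failure of homotopy invariance.

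It then remains to run the weight argument. The dilation $\theta_c$ carries the weight-$w$ summand to the weight-$cw$ summand, and on the reduced cyclic-nerve complex its effect in each internal degree is multiplication by a power of $c$ combined with this multiplicative weight shift. Because every $c_i\ge 2$, a fixed nonzero-weight class is pushed to ever higher weights along the tower $A\xrightarrow{\theta_{c_1}}A\xrightarrow{\theta_{c_2}}\cdots$, and a continuity/telescope argument---the integral analogue of the nilpotence of dilations on reduced $K$-theory asserted in the affine Dilation Theorem---shows that the colimit of the reduced part vanishes. Feeding this back through the identifications of the previous paragraph gives $\varinjlim_{\theta_c}\pi_*\cF(\Lambda[A])=0$, and hence the theorem.

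I expect the main obstacle to be the mixed-characteristic identification and vanishing of the cdh-fiber of $\mathrm{TC}$: in the previously known equal-characteristic cases one could argue entirely with cyclic homology (when a field of characteristic $0$ is present) or by a direct $\bmod\,p$ computation (in positive characteristic), whereas here the rational and $p$-adic pieces, together with their interaction, must be controlled simultaneously through $\mathrm{THH}$ of the monoid algebra and its cyclotomic structure. A secondary difficulty is keeping the cyclic-nerve computations functorial in $\Lambda$ across the noncommutative cases (c)--(e), where $K(X_\Lambda)$ is defined via the dg category $\perf(X_k)\oo_k^L\Lambda$ and one must ensure that $\mathrm{THH}$, the weight decomposition, and cdh descent all base-change correctly along $k_0\to\Lambda$.
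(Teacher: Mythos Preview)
Your overall architecture---identify the fiber of $K\to KH$ with a $cdh$-fiber, compare it to trace invariants, and then kill the trace side under dilation---is the same as the paper's. But two of your three steps have real gaps.

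\medskip

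\textbf{The weight argument is not an argument.} Saying that $\theta_c$ ``pushes a fixed nonzero-weight class to ever higher weights'' does not make anything vanish in a filtered colimit: a class survives in $\varinjlim$ unless some transition map kills it, and your description of $\theta_c$ on the cyclic nerve is just a bijection on basis simplices, not ``multiplication by a power of $c$''. The paper does \emph{not} argue this way. Instead it introduces the dilated cyclic bar construction $\tNcy(A)=\bigvee_{a}\Ncy(A\ad{a};a)$ and the presheaf $\tOmega$, proves that $|\Ncy|^\fc\simeq\tOmega^\fc$ as $\S$-spaces, and then shows (Theorem~\ref{MT2p}) that $\bH_\zar(-,\tOmega^{T,r})$ satisfies $cdh$ descent on $\cMpctf$ for any $\S$-spectrum $T$. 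This is what yields $cdh$ descent for $TC^\nu(-_\Lambda;p)^\fc$ and, via the $SBI$ sequence, for $HN(-_{\Lambda\otimes\Q})^\fc$. No weight-shifting telescope argument will substitute for this.

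\medskip

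\textbf{The trace comparison is not Dundas--Goodwillie--McCarthy, and the finite residue field case is missing.} You cannot simply ``identify the $cdh$-fiber of $K$ with the $cdh$-fiber of $TC$''. The paper shows instead that the homotopy fibers of $K\otimes\Q\to HN_{\Lambda\Q}$ and of $K/p^n\to\{TC^\nu(-;p)/p^n\}_\nu$ (as a \emph{pro}-spectrum) satisfy weak $cdh$ descent, using excision (Corti\~nas, Geisser--Hesselholt) and nil-invariance (Goodwillie, McCarthy); see Propositions~\ref{Qfiber} and~\ref{pfiber}. These inputs, together with $KH\simeq\bH_{\cdh}(-,\cK)$, require $k_0$ to be regular of finite Krull dimension with \emph{infinite} residue fields. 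You never say how to remove that last hypothesis. The paper handles it separately: after reducing to local $k_0$ (Lemma~\ref{lem:reducetolocal}), one builds a tower of \'etale extensions $k_0\subset k_1\subset\cdots$ of $l$-power rank with infinite residue field in the limit (Lemma~\ref{lem:tower}), checks that the relevant $K$-regularity or regularity hypothesis persists along the tower (Lemma~\ref{lem:gammasep} for case~(c), the hypothesis itself for case~(e)), and uses transfer to conclude that $\pi_*\cF^\fc(k_0)$ is $l$-primary torsion for every prime $l$, hence zero. Without this \'etale tower and transfer step your proof does not cover cases (a), (c), or (e) when $k_0$ has a finite residue field.

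\medskip

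Your reduction to affines in the first paragraph is harmless but unnecessary: the paper works with the presheaves $\cK_\Lambda$, $\cKH_\Lambda$, $HN_{\Lambda\Q}$, $\{TC^\nu/p^n\}$ directly on $\cMpctf$ and never needs to pass to a cover.
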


The particular case of this theorem when $\Lambda$ is a commutative 
regular ring containing a field was proven
by the authors in \cite{chww-toric} and \cite{chww-p}, and was used to verify
Gubeladze's conjecture for these rings.  The proof of the dilation theorem
presented in this paper follows the strategy used in \cite{chww-p}. The basic
outline is as follows:

\noindent{\bf Step 1.} We show that the singularities of any
nice monoid scheme may be resolved by a finite sequence of blow-ups
along smooth normally flat centers. This is based upon a theorem of
Bierstone-Milman (see \cite{BM}).
Even if we begin with a monoid, the blow-ups of its monoid scheme
take us out of the realm of (affine) monoids. This step  is carried out in
Section \ref{sec:nf}, and is preceded by a short calculation with monoids in
Section \ref{sec:rivalfree}. This step is independent of $\Lambda$.

\noindent {\bf Step 2.} In \cite{chww-monoid}, we introduced the notion of
$cdh$ descent for presheaves on pctf monoid schemes, and showed that the
functor $X\mapsto KH(X_\Lambda)$ from pctf monoid schemes to spectra satisfies
$cdh$ descent when $\Lambda$ is a commutative regular ring 
of finite Krull dimension that contains a field.
In Section \ref{sec:KHdescent}, we review and use Step\,1 to modify this
proof, replacing the assumptions on $\Lambda$ given above by the assumptions that $\Lambda$ is a 
flat associative algebra over a commutative regular ring of finite Krull dimension all of whose
residue fields are infinite; see
Corollaries \ref{cor:khdescent} and \ref{cor:khCdescent}.  
Also in this section, we apply the arguments
of \cite{chww-p} to prove that both the fiber of the Jones-Goodwillie
Chern character $K(X_\Lambda)\otimes\Q\to HN(X_{\Lambda\otimes\Q})$ 
and the fiber of the $p$-local cyclotomic trace with $\Z/p^n$-coefficients to
the pro-spectrum $\{TC^\nu(X_\Lambda;p)\}_\nu$ satisfy $cdh$ descent whenever
$\Lambda$ is as in Theorem \ref{Main-intro}
(see Propositions \ref{Qfiber} and \ref{pfiber}).

\noindent{\bf Step 3.} For a presheaf of spectra $E$ on monoid schemes, 
write $\cF_E$ for the homotopy fiber of the map from $E$ to its $cdh$-fibrant
replacement. From Step ~2 we conclude that if $\Lambda$ is a flat algebra over a
commutative regular ring $k_0$ of finite Krull dimension all of whose residue fields are 
infinite, then $\cF_K$ is equivalent to the fiber of the map $K\to KH$, its
rationalization is equivalent to $\cF_{HN(-\otimes\Q)}$, and for each $n$ the
cofiber $\cF_K/p^n$ of multiplication by $p^n$ is equivalent to
the pro-spectrum
$\{ \cF_{E_\nu} \}$ for $E_\nu = TC^\nu(-;p)/p^n$.
Making use of the constructions developed in
\cite[Section 6]{chww-p}, we prove that for an arbitrary ring $\Lambda$,
monoid scheme $X$ and $\nu\ge 1$, 
both $\cF_{HN(-\otimes\Q)}(X)$ and $\cF_{TC^\nu(-;p)}(X)$ 
become contractible upon taking the colimit over any infinite sequence of
non-trivial dilations. The case of $TC^\nu(-;p)$ is considered in Section
\ref{sec:dilatetc} (see Theorem \ref{thm:TCdilated}) and that of $HN(-\oo\Q)$
in Section \ref{sec:dilatehn} (see Corollary \ref{cor:HNdilated}).

\noindent{\bf Step 4.} Theorem \ref{intro-Dilation} is finally obtained in
Section \ref{sec:reduction}:   Theorem \ref{thm:main}
gives parts (c) and (e) and the other parts are consequences. 
The particular case when $\Lambda$ 
is commutative and contains a field is \cite[Theorem 8.3]{chww-p}. 
The case when $\Lambda$ is flat over a commutative regular ring $k_0$ of finite Krull dimension
all of whose residue fields are infinite follows using Steps~2-3 and 
the argument of the proof of \cite[Theorem 8.3]{chww-p}. We
show further that, if  $k_0$ has finite 
Krull dimension and $\Lambda\supset k_0$ satisfies the hypothesis of the theorem, then the map \eqref{map:dilate} is an isomorphism 
if and only if this happens for every local ring of $k_0$ 
(Lemma \ref{lem:reducetolocal}). Then we show
that for $k_0$ a regular local ring, the case when the residue field is
infinite implies the case of finite residue field. This establishes parts (c) and (e) of the theorem. 
Part (d) is a special case of part (e) and part (b) follows from 
the fact that commutative $C^*$-algebras are $K$-regular (\cite[Theorem 8.2]{CortinasThom}). 
Finally we observe  
that every commutative regular ring $k$ with 
$\Spec(k)$ connected is a flat extension of either a 
finite field or a localization of the ring of integers; this proves (a).

\goodbreak

\bigskip

\noindent{\bf Some notation:} Given a presheaf $E$ from some full
subcategory of the category of monoid schemes to spectra (or spaces,
or chain complexes, or equivariant spectra or spaces) and a sequence
$\fc = (c_1,c_2,c_3,\dotsc)$ of integers larger than $1$, we
write $E^\fc$ for the (sequential) homotopy colimit
$\varinjlim\nolimits_{\,\theta_{c_i}}\!E$, again a presheaf of spectra
(or spaces, or ...) on the same category of monoid schemes. If our
presheaf is obtained by sending the monoid scheme $X$ to the spectrum
(or space, etc.\!) $E(X_\Lambda)$ for a ring $\Lambda$ we will sometimes 
(when no confusion is possible) abuse notation and
write $E^\fc$ for the presheaf 
$X\mapsto\varinjlim\nolimits_{\,\theta_{c_i}}\!E(X_\Lambda)$.

\noindent{\bf Monoid terminology:} 
Unless otherwise stated, a {\em monoid} is a pointed
abelian monoid written multiplicatively, i.e., it is an abelian
monoid with unit\,1 and a basepoint $0$ satisfying $a\cdot0=0$ 
for all $a\in A$.  If $B$ is an unpointed monoid, adjoining an element
$0$ yields a pointed monoid $B_+$. If $A$ is pointed monoid, we write $A'$ for the subset $A\setminus\{0\}$. 
We say that $A$ is {\em cancellative} if
$ab=ac$ implies $b=c$ or $a=0$ for any $a,b,c\in A$.
In particular, if $A$ is cancellative, then 
$A'$ is an unpointed submonoid of $A$. 
We say $A$ is {\em torsionfree} if whenever $a^n=b^n$ for $a,b\in A$ and
some $n\ge1$, we have $a=b$. A monoid is cancellative and torsionfree
if and only if it is a submonoid of the pointed monoid $T_+$ associated to a
torsionfree abelian group $T$.

An {\em ideal} $I$ of a monoid $A$ is a subset containing $0$ and satisfying 
$AI\subseteq I$. In this case, the quotient monoid $A/I$ is defined
by collapsing $I$ to $0$; the product in $A/I$ is the unique one 
making the canonical surjection $A \onto A/I$ a morphism.
A monoid $A$ is said to be {\it partially cancellative} if it is
a quotient $C/I$ of a cancellative monoid $C$ by some ideal $I$; 
if $C$ is both cancellative and torsionfree, we say that $A$ is {\it pctf}.

The prime ideals in $A$ form a space $\MSpec(A)$, with a sheaf of monoids. 
A {\it monoid scheme} is a space $X$ with a sheaf of monoids $\cA$
that is locally isomorphic to $\MSpec(A)$ for some $A$.  
A closed subscheme $Z$ is {\it equivariant} if its structure sheaf
is $\cA/\cI$ for a sheaf of ideals $\cI$.

\goodbreak
\section{Free $A$-sets} \label{sec:rivalfree}

Given a monoid $A$, an {\em $A$-set} is a pointed set $X$, with
basepoint $0$, together with a function $A \smsh X \to X$, written
$(a,x) \mapsto a \cdot x$, such that $a \cdot (a' \cdot x) = aa' \cdot
x$ for all $a,a' \in A$ and $x \in X$. The $k$-realization $k[X]$ of
an $A$-set is a $k[A]$-module. The monoid $A$ is an $A$-set in an obvious way,
and given any collection of $A$-sets, their wedge sum is again an
$A$-set.  A subset $B$ of an $A$-set $X$ is a {\em basis} if $X=\bigvee_{b\in B} Ab$ and if $ab = a'b'$, for $a,a' \in A'$ and $b,b' \in B$, implies that either 
$b =b'$ and $a = a'$ or $a=a'=0$. We say that an $A$-set $X$ is {\em free} if it has a basis, or equivalently if it is isomorphic
to a wedge sum of copies of $A$, $\bigvee_{i \in I} A$. 

The goal of this section is to prove the following result.

\begin{thm}\label{rivalthm:free A-set}
Let $A$ be a cancellative monoid, $X$ a finitely
generated $A$-set and $k$ a commutative ring.  If $k[X]$ is a free
$k[A]$-module, then $X$ is a free $A$-set.
\end{thm}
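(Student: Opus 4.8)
The plan is to peel $X$ apart into its ``orbits'' under the fraction relation and to show, using that $k[X]$ is not merely free but that each orbit's realization is a direct summand of it, that every orbit is a single free orbit. First I would reduce to the case that $k$ is a field: since $k[X]$ is free over $k[A]$, for any homomorphism $k\to\kappa$ the base change $\kappa[X]=\kappa\otimes_k k[X]$ is free over $\kappa[A]=\kappa\otimes_k k[A]$, so I may replace $k$ by a residue field. Recall that $k[A]$ is the ordinary monoid algebra of the cancellative unpointed monoid $A'=A\setminus\{0\}$; write $G=(A')^{\mathrm{gp}}$ for its group completion, so $A'\into G$ and $k[A][S^{-1}]=k[G]$ for the multiplicative set $S=A'\subseteq k[A]$ of monomials. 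A first consequence of freeness is that the $A'$-action on $X'=X\setminus\{0\}$ has no zeros and is cancellative: if $ax=0$, or $ax=a'x$ with $a\neq a'$ in $A'$, then $x$ would be a torsion element of the torsion-free module $k[X]$.

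Next I would organize $X'$ into components by declaring $x\sim y$ when $ax=by$ for some $a,b\in A'$; cancellativity makes this an equivalence relation, so $X=\bigvee_j X_j$ with each $X_j$ an $A'$-stable subset, and correspondingly $k[X]=\bigoplus_j k[X_j]$ as $k[A]$-modules. The key module-theoretic input is that each $k[X_j]$, being a direct summand of the free module $k[X]$, is a finitely generated projective $k[A]$-module. Inverting $S$ turns each $X_j$ into a transitive $G$-set $G/H_j$ and exhibits $k[X][S^{-1}]=\bigoplus_j k[G/H_j]$ as a free $k[G]$-module of rank equal to the number of components; comparing this with the projective summands $k[X_j]$ pins the generic rank of each $k[X_j]$ to one and forces every stabilizer $H_j$ to be trivial, so that each $X_j$ is identified with a finitely generated monomial $A'$-submodule $M_j\subseteq G$ whose set of fractions is all of $G$.

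It therefore remains to prove that each such $M_j$ is \emph{principal}, $M_j=g\,A'$; this integral step is the crux and the one I expect to be the main obstacle. Equivalently, I must show that a finitely generated monomial (hence $G$-graded) projective $k[A']$-module is graded-free of rank one — that is, that torus-equivariant invertible modules on the affine monoid scheme $\Spec k[A']$ are trivial. The mechanism is that a non-principal $M_j$ produces a $k[M_j]$ that is either non-invertible (so non-projective, contradicting that it is a summand of a free module) or a nontrivial equivariant line bundle (which cannot exist on an affine monoid scheme). To reach the ``connected graded'' situation where graded projective implies graded free, I would first split off the unit group $U=A^\times$: restricting the module structure along $k[U]\to k[A]$ shows $k[X]$ is free over the group ring $k[U]$, and the group-ring case shows $X$ is a free $U$-set, allowing one to descend to $A'/U$ and assume $A$ has no nontrivial units, so $k[A']$ is positively graded with degree-zero part $k$; a minimal-degree generator of the graded module $k[M_j]$ then generates it. The genuinely delicate point here is that the triviality-of-stabilizers argument above secretly used torsion-freeness of free modules, while $G$ (equivalently $A$) need not be torsionfree and $k[G]$ need not be a domain; handling this requires running the stabilizer and principality arguments componentwise over $\Spec k[G]$, or a direct graded argument, rather than a naive appeal to torsion-freeness.

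Finally I would assemble the pieces: once each component $M_j$ is principal we have $X_j\cong A'$ as an $A'$-set, so $X=\bigvee_j X_j\cong\bigvee_j A$ is a free $A$-set, with a basis given by one generator per component. Thus the freeness of $k[X]$ over $k[A]$ propagates back to freeness of $X$ over $A$, as claimed.
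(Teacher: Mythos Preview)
Your strategy is broadly correct and converges on the same core mechanism the paper uses: establish that $X$ is a cancellative $A$-set, reduce to the case $A^\times=\{1\}$, and then run a graded Nakayama argument over the ``graded-local'' ring $k[A]$. The paper, however, organizes this more economically. Rather than decomposing $X$ into orbits and arguing that each $k[X_j]$ is a rank-one graded projective (hence graded-free, hence principal), it works with the free module $k[X]$ all at once: after the unit reduction it takes the finite set $B$ of minimal elements of $X$ under the divisibility order and shows, via a graded Nakayama lemma (Lemma~\ref{lem1-7b} and Corollary~\ref{cor1-7}), that $B$ is already a $k[A]$-basis of $k[X]$, hence an $A$-basis of $X$. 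This avoids ever needing ``graded projective $=$ graded free'' on individual pieces, and it makes your reduction to a field unnecessary.

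The gap you flag is real, and your suggested patches (``componentwise over $\Spec k[G]$'' or ``a direct graded argument'') are too vague to count as a fix: when $A$ has torsion, $a-a'$ can be a zerodivisor in $k[A]$, so your torsion-freeness argument for cancellativity and for trivial stabilizers does not go through as written. The paper handles this cleanly (Lemma~\ref{lem:Xcancels}): cancellativity of $A$ makes every element of $S=A'$ a non-zerodivisor in $k[A]$, so localizing the \emph{free} module $k[X]$ at $S$ is injective; hence $X\to X_G:=S^{-1}X$ is injective on underlying sets. Since $k[X_G]$ is free over $k[G]$, the group case (Example~\ref{ex:G-sets}) shows $X_G$ is a free $G_+$-set, and $X$ inherits a cancellative $A$-action. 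This is exactly your stabilizer-triviality claim, proved without assuming $k[A]$ is a domain. Your unit-splitting step is also tangled: you do not need $X$ to be a free $U$-set. One only needs that $k[\overline{X}]$ is free over $k[\overline{A}]$ (automatic by base change, Lemma~\ref{lem:barA}) and that, for cancellative $X$, freeness of $\overline{X}$ over $\overline{A}$ lifts back to freeness of $X$ over $A$ (Lemma~\ref{lem1-7}).
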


\begin{ex}\label{ex:G-sets}
Suppose that $A=G_+$ for an abelian group $G$.  Then $X\setminus\{0\}$
is a disjoint union of orbits $G/H_i$, and $k[X]$ is the direct sum of
the $k[G/H_i]$.  If $k[X]$ is a free $k[A]$-module, then each $H_i$
is trivial, so $X$ is a free $A$-set.
\end{ex}

The proof requires a sequence of preliminary results.

Let $A^\times$ denote the group of units of $A$ and define the
quotient monoid $\overline{A} = A/\sim$ where $a \sim b$ if and only
if $a = ub$ for some $u \in A^\times$. Similarly, 
$\overline{X} = X\smsh_A\overline{A}$ is the $\overline{A}$-set
$X/\sim$, where $x \sim y$ if and only if $x = uy$ for some $u\in A^\times$. 

\begin{lem}\label{lem:barA}
For any monoid $A$ and any commutative ring $k$, 
$k[\overline{A}] \cong k[A]\oo_{k[A^\times]}k$.
If $k[X]$ is free as a $k[A]$-module, then 
$k[\overline{X}]$ is free as a $k[\overline{A}]$-module.
\end{lem}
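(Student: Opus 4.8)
The plan is to treat the two assertions in turn, deducing the statement about free modules from the isomorphism of algebras.

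For the isomorphism $k[\overline{A}] \cong k[A]\oo_{k[A^\times]}k$, I would work with the augmentation ideal. The inclusion of the group of units into $A$ induces a ring map $k[A^\times]\to k[A]$, and the augmentation $k[A^\times]\to k$ sending every unit to $1$ makes $k$ into a $k[A^\times]$-module; thus $k[A]\oo_{k[A^\times]}k = k[A]/(I\cdot k[A])$, where $I=\ker(k[A^\times]\to k)$ is spanned by the elements $u-1$ with $u\in A^\times$, so that $I\cdot k[A]$ is the $k$-span of the elements $ua-a$ for $u\in A^\times$ and $a\in A'$. On the other hand, the surjection $A\onto\overline{A}$ realizes to a surjection $k[A]\onto k[\overline{A}]$ whose kernel is spanned by the differences $a-b$ with $a,b\in A'$ lying in a common $A^\times$-orbit. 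Since any two elements of one orbit differ by a unit, these two submodules of $k[A]$ coincide, and I would conclude that the induced map $k[A]/(I\cdot k[A])\to k[\overline{A}]$ is a bijection; checking that it respects the algebra structure is immediate from the definitions.

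For the statement about free modules, I would use that the realization functor $X\mapsto k[X]$ from pointed $A$-sets to $k[A]$-modules is strong monoidal and carries the relative smash product $\smsh_A$ to the relative tensor product $\oo_{k[A]}$. Applying this to $\overline{X}=X\smsh_A\overline{A}$ gives a natural isomorphism $k[\overline{X}]\cong k[X]\oo_{k[A]}k[\overline{A}]$. Now if $k[X]$ is free over $k[A]$, say $k[X]\cong\bigoplus_{i\in I}k[A]$, then base change along $k[A]\to k[\overline{A}]$ yields $k[\overline{X}]\cong\bigoplus_{i\in I}k[\overline{A}]$, which is free over $k[\overline{A}]$ as claimed.

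The one point that requires care is the compatibility of $k$-realization with the relative smash product, i.e., the identity $k[X\smsh_A\overline{A}]\cong k[X]\oo_{k[A]}k[\overline{A}]$; this is a formal consequence of $k[-]$ being a monoidal left adjoint, but one must keep track of the pointed-set convention under which the basepoint of every $A$-set realizes to $0$. Alternatively, and perhaps more transparently, both assertions can be proved uniformly by decomposing $k[X]$ (respectively $k[A]$) as a direct sum of permutation $k[A^\times]$-modules indexed by the $A^\times$-orbits on $X\setminus\{0\}$ (respectively $A'$) and computing coinvariants one orbit at a time, each transitive orbit contributing a single copy of $k$; I would include this as a cross-check, since combining the two assertions also gives the clean formula $k[\overline{X}]\cong k[X]\oo_{k[A^\times]}k$.
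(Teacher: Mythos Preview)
Your proof is correct and follows essentially the same approach as the paper. For the first assertion, the paper chooses orbit representatives to decompose $k[A]$ as $\bigoplus_i k[A^\times a_i]$ and then reduces modulo $I$ orbit by orbit, while you compare spanning sets of the two candidate kernels directly; these are two phrasings of the same computation, and indeed your closing ``cross-check'' via orbit decomposition is exactly the paper's argument. For the second assertion, both you and the paper invoke the identity $k[X\smsh_A\overline{A}]\cong k[X]\oo_{k[A]}k[\overline{A}]$ (the paper cites \cite{chww-monoid} for it, you appeal to $k[-]$ being a monoidal left adjoint) and then base-change a free module.
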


\begin{proof}
The kernel $I$ of $k[A^\times]\to k$ is generated by $\{u-1:
u\in A^\times\}$; we have to show that the surjection
$k[A]/Ik[A]\to k[\overline{A}]$ is an isomorphism.
Choose a set of representatives $\{a_i\}$ for $\sim$, so that
$A$ is the disjoint union of $0$ and the sets $A^\times a_i$.
Then $k[A]$ is the direct sum of the $k[A^\times a_i]$,
and $k[A]/Ik[A]$ is the direct sum of the $k[a_i]$, i.e.,
$k[\overline{A}]$.  The result for free modules follows
from the equation below (see \cite[paragraph below diagram 1.8]{chww-monoid})
\[ 
k[\overline{X}]=k[X\smsh_A\overline{A}] \cong
k[X] \oo_{k[A]} k[\overline{A}].\qedhere\]
\end{proof}

We will say that an $A$-set $X$ is {\em cancellative} if
$ax=bx$ implies $a=b$ for any nonzero $x\in X$ and $a,b\in A$.

\begin{lem} \label{lem1-7}
Assume $A$ is a cancellative monoid and $X$ is a cancellative $A$-set.
Then $X$ is free as an $A$-set if and only if  $\overline{X}$ is free 
as an $\overline{A}$-set.
\end{lem}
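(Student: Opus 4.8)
The plan is to prove the two implications separately, with essentially all of the content concentrated in the ``if'' direction, where cancellativity of $X$ is what makes the argument work.

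For the ``only if'' direction, suppose $X$ is free, say $X \cong \bigvee_{i\in I} A$. Using the definition $\overline{X} = X\smsh_A\overline{A}$ together with the fact that $-\smsh_A\overline{A}$ commutes with wedge sums and that $A\smsh_A\overline{A}\cong\overline{A}$ (as recorded in the proof of Lemma \ref{lem:barA}), I obtain $\overline{X}\cong\bigvee_{i\in I}\overline{A}$, so $\overline{X}$ is free over $\overline{A}$. Concretely, the image in $\overline{X}$ of a basis of $X$ is a basis of $\overline{X}$. This direction uses no cancellativity.

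For the ``if'' direction, suppose $\overline{X}$ is free and choose a basis $\{\overline{x}_j\}_{j\in J}$ of $\overline{X}$ over $\overline{A}$, indexed so that distinct $j$ give distinct (nonzero) basis elements, and for each $j$ pick a lift $x_j\in X$ mapping to $\overline{x}_j$; note $x_j\neq 0$. I claim $B=\{x_j\}_{j\in J}$ is a basis of $X$. For generation, I first observe that the canonical surjection $X\to\overline{X}$ sends nonzero elements to nonzero elements, since $x\sim 0$ forces $x=0$. Given $0\neq x\in X$, I write $\overline{x}=\overline{c}\cdot\overline{x}_j$ for some $j$, and since $\overline{x}\neq 0$ I may choose a nonzero lift $c\in A'$, so that $\overline{x}=\overline{c\cdot x_j}$ and hence $x=u\cdot(c\cdot x_j)=(uc)\cdot x_j$ for some unit $u\in A^\times$; thus $X=\bigcup_j Ax_j$. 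For the freeness condition, suppose $a\cdot x_j=a'\cdot x_{j'}$ with $a,a'\in A'$. Projecting to $\overline{X}$ gives $\overline{a}\cdot\overline{x}_j=\overline{a'}\cdot\overline{x}_{j'}$ with $\overline{a},\overline{a'}\in\overline{A}'$, and since $\{\overline{x}_j\}$ is a basis of $\overline{X}$ this forces $j=j'$. Then $a\cdot x_j=a'\cdot x_j$ with $x_j\neq 0$, so cancellativity of the $A$-set $X$ yields $a=a'$. This is exactly the basis condition, and taking $j\neq j'$ it also shows $Ax_j\cap Ax_{j'}=\{0\}$, so the union $X=\bigvee_j Ax_j$ is a wedge. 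Hence $B$ is a basis and $X$ is free.

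The main obstacle is the final step of the ``if'' direction. Reducing modulo units only recovers the coefficient up to a unit, i.e.\ it yields $\overline{a}=\overline{a'}$, whereas the basis condition for $X$ demands the genuine equality $a=a'$. Cancellativity of the $A$-set $X$ is precisely what bridges this gap, eliminating the ambiguity introduced by the action of $A^\times$; this is why the hypothesis that $X$ (and not merely $A$) is cancellative is needed in the statement.
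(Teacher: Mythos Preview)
Your proof is correct and follows essentially the same approach as the paper: for the forward direction both use $\overline{\bigvee_I A}=\bigvee_I\overline{A}$, and for the converse both lift a basis of $\overline{X}$ to $X$ and invoke cancellativity of $X$ to upgrade the equality $\overline{a}=\overline{a'}$ to $a=a'$. Your version simply supplies more detail (the generation step and the verification that nonzero elements stay nonzero under $X\to\overline{X}$) where the paper says ``it is clear.''
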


\begin{proof}
The forward direction holds since 
$\overline{\bigvee_{I} A} = \bigvee_{I} \overline{A}$.
Conversely, suppose that $\overline{X}$ is free as an 
$\overline{A}$-set with basis $\overline{B}$.  Let $B$ be a subset of $X$
given by choosing one representative for each element of
$\overline{B}$.  It is clear that $B$ generates $X$, and is an
$A$-basis of $X$ because $X$ is cancellative: 
if $ab=a'b$ in $X$ then $a=a'$.
\end{proof}

\begin{rem}
In Lemma \ref{lem1-7}, the hypothesis that $X$ be cancellative is needed, as
the example $A=\{0,1,-1\}$ and $X=\{0,1\}$ shows; in this case
$\overline{A}=\overline{X}$.
\end{rem}

\begin{rem}\label{poset}
If $A$ is cancellative and has no non-trivial units, 
we  define a partial ordering on $A$ by $b\le c$ if $c=ab$ for
some $a$ in $A$. This partial ordering respects the group
operation on $A$: if $a\le b$ then $ac\le bc$ for all $c \in A$. 

The group completion $G_+$ of such a monoid $A$ is also partially ordered: $h\le g$
if $g=ah$ for some $a$ in $A$. The inclusion $A\subset G_+$
respects the partial ordering.
\end{rem}

We say that a ring $R$ is {\it $A$-graded} if $R = \bigoplus_{a\in A} R_a$ 
with $R_aR_b\subseteq R_{ab}$ and $R_0=\{0\}$. 
If $A$  has no non-trivial units, then
$R_{\ne 1}=\bigoplus_{a\ne1} R_a$ is an ideal of $R$, with $R_1=R/R_{\ne 1}$. 

\begin{ex}\label{ex:kAgraded}
For any monoid $A$ and ring $k$, the monoid ring $k[A]$ is $A$-graded with
$k[A]_a := k \cdot a$. If $A$ has no non-trivial units,
$k[A]_{\ne1}=k[\fm]$ where $\fm=\fm_A$ is the
unique maximal ideal of $A$.  
\end{ex}

\begin{lem} \label{lem1-7b}
Let $A$ be a  
cancellative monoid with no non-trivial units.
Suppose $R$ is an $A$-graded ring.  
If $T$ is a finitely generated nonzero $A$-graded $R$-module such that $T_0 = 0$,
then $R_{\ne 1}T \ne T$.
\end{lem}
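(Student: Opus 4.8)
The plan is to prove this as a graded Nakayama lemma, the only twist being that $A$ carries no descending chain condition, so finite generation must be exploited carefully. Suppose toward a contradiction that $R_{\ne1}T=T$. First I would pick finitely many nonzero homogeneous generators $t_1,\dots,t_n$ of $T$, of degrees $d_1,\dots,d_n\in A$; such a set exists because $T$ is graded and finitely generated, and since $T_0=0$ each $d_i\ne0$. Write $S=\{a\in A:T_a\ne0\}$ for the support of $T$. Since $t_i\ne0$ lies in $T_{d_i}$, each $d_i\in S$; and since $T=\sum_i Rt_i$, every $a\in S$ satisfies $a\ge d_i$ for some $i$, where $\le$ is the divisibility partial order of Remark \ref{poset} (so $b\le c$ means $c=ab$ for some $a\in A$).

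The heart of the matter is to produce a \emph{minimal} element of $S$, and this is where I expect the only real obstacle. One cannot simply take a degree of smallest ``size'': the order on $A$ need not satisfy the descending chain condition (for instance a dense submonoid of $(\Q_{\ge0},+)$ is cancellative with no nontrivial units yet has infinite descending chains). The remedy is that finite generation forces a minimum to live among the generator degrees. Concretely, I would choose $d_j$ minimal in the finite set $\{d_1,\dots,d_n\}$ for divisibility, and claim it is minimal in all of $S$: if $c\in S$ with $c\le d_j$, then $c\ge d_i$ for some $i$, so $d_i\le c\le d_j$; minimality of $d_j$ among the $d$'s forces $d_i=d_j$, and then antisymmetry of the order (valid because $A$ is cancellative with no nontrivial units) gives $c=d_j$.

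Finally I would extract the degree-$d_j$ component of the assumed identity $R_{\ne1}T=T$. As $R_{\ne1}$ is a graded ideal and $T$ a graded module, $R_{\ne1}T$ is graded with $(R_{\ne1}T)_{d_j}=\sum R_bT_c$, the sum ranging over factorizations $d_j=bc$ with $b\ne1$. For any such factorization one has $c\le d_j$ and $c\ne d_j$: indeed $c=d_j$ would give $bd_j=d_j=1\cdot d_j$, and cancellativity (using $d_j\ne0$) would force $b=1$. Hence $c<d_j$, so $T_c=0$ by minimality of $d_j$ in $S$, and therefore $(R_{\ne1}T)_{d_j}=0$. But $T_{d_j}\ne0$ since $d_j\in S$, contradicting $R_{\ne1}T=T$. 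Thus $R_{\ne1}T\ne T$. Everything beyond the minimality claim is routine degreewise bookkeeping; the one genuinely load-bearing step is converting finite generation into a minimal support degree in the absence of any chain condition on $A$ itself.
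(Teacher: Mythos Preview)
Your proof is correct and follows essentially the same strategy as the paper's: both pick homogeneous generators, observe that a degree minimal among the generator degrees is minimal in the support $S$ (using that every support element dominates some generator degree), and then derive the contradiction at that minimal degree. The only cosmetic difference is in the last step: the paper picks a single nonzero $t\in T_{d_j}$ and expands it in the generators to see that each summand has degree strictly above some $s_i$, whereas you compute the whole graded piece $(R_{\ne 1}T)_{d_j}$ and note it vanishes because $T_c=0$ for every $c<d_j$. Your remark that the absence of a descending chain condition on $A$ is the one point requiring care, and that finite generation supplies the needed minimum, is exactly right.
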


\begin{proof} 
Set $S = \{a\in A': T_a \ne 0\}$, and suppose
that $T$ is generated by homogenous elements $x_1,\dots,x_l$
of degrees $s_1,\dots,s_l$ ($s_i\in S$).
Let $S_0$ denote the set of minimal elements in $\{s_1,\dots,s_l\}$ with respect to the partial ordering in Remark \ref{poset}. 
Then every element of $S_0$ is minimal in $S$.
If $s\in S$, there is a non-zero homogeneous element $t$ in $T_s$. 
Write $t=\sum_i r_i x_i$. Grouping the sum by common degrees, we may 
assume the $r_i$ are homogeneous and that $r_i = 0$ unless $s=|r_ix_i|$.
Since $t\ne0$, there is an $i$ so that $s=|r_ix_i|\ge s_i$.
This shows that $s \geq s_0$ for some $s_0 \in S_0$.

Fix $s \in S_0$ and let $t$ be a non-zero element of $T_s$. If 
$t\in R_{\ne 1}T$ then $t = \sum_i r_i x_i$ with $r_i \in R_{\ne 1}$.
As before, we may assume all the $r_i$ are homogenous and nonzero,
and that $r_i x_i \in R_s$ for all $i$. Since $r_i \in R_{\ne 1}$, 
we have $|r_i| > 1$ and so $|r_i x_i| > 1 |x_i|=s_i$.
This contradicts the minimality of $s$, showing that $T\ne R_{\ne 1}T$.
\end{proof}

\begin{cor} \label{cor1-7} 
Let $A$ be a cancellative monoid with no nontrivial units.
Suppose $R$ is an $A$-graded ring and $T$ is a finitely generated, free graded
$R$-module. If $Y$ is any set of homogeneous elements in $T$ 
whose image in $T/R_{\ne 1}T$ is a basis of the free $R_1$-module
$T/R_{\ne 1}T$, then $Y$ is a basis of $T$ as an $R$-module.
\end{cor}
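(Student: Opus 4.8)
The plan is to read this as a graded Nakayama lemma: the hypothesis says exactly that $Y$ reduces to a basis modulo the graded analogue of the Jacobson radical $R_{\ne 1}$, and the goal is to lift this to the statement that $Y$ is a genuine basis of $T$. Since $T$ is finitely generated, $T/R_{\ne 1}T$ is a finitely generated free $R_1$-module, so its basis is finite; choosing one homogeneous representative in $Y$ for each basis element, I index $Y = \{y_1,\dots,y_m\}$ and form the free graded module $F = \bigoplus_{j=1}^m R\,\epsilon_j$ with $\deg\epsilon_j = |y_j|$, together with the graded $R$-linear map $\phi\colon F \to T$, $\epsilon_j \mapsto y_j$. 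It then suffices to prove that $\phi$ is an isomorphism. I note at the outset that any free graded $R$-module has trivial degree-$0$ part: a homogeneous generator has nonzero degree (as $R_0 = 0$), and $|r|\,d = 0$ with $d \ne 0$ forces $|r| = 0$ by cancellativity. In particular $F_0 = T_0 = 0$, which is what is needed to invoke Lemma \ref{lem1-7b}.

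First I would show $\phi$ is surjective. Let $C = \operatorname{coker}\phi$, a finitely generated graded $R$-module with $C_0 = 0$. Because the images of the $y_j$ generate $T/R_{\ne 1}T$, the composite $F \to T \to T/R_{\ne 1}T$ is onto, so $\image\phi + R_{\ne 1}T = T$, i.e.\ $R_{\ne 1}C = C$. Were $C$ nonzero this would contradict Lemma \ref{lem1-7b}, so $C = 0$ and $\phi$ is surjective.

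Next I would prove injectivity by a splitting argument. Write $K = \ker\phi$, giving a short exact sequence $0 \to K \to F \map{\phi} T \to 0$ of graded $R$-modules. Since $T$ is graded-free, this splits in the graded category (lift a homogeneous basis of $T$ to homogeneous preimages of the same degree), so $F \cong T \oplus K$ as graded modules; thus $K$ is a direct summand of the finitely generated module $F$, hence finitely generated, and $K_0 = 0$. Applying $-\oo_R R_1$ keeps the split sequence exact, yielding $0 \to K/R_{\ne 1}K \to F/R_{\ne 1}F \to T/R_{\ne 1}T \to 0$. Here $F/R_{\ne 1}F$ is free over $R_1$ on the $\bar\epsilon_j$, and the right-hand map sends $\bar\epsilon_j \mapsto \bar y_j$; since the $\bar y_j$ are by hypothesis a basis of $T/R_{\ne 1}T$, this map carries a basis bijectively to a basis and is therefore an isomorphism. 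Consequently $K/R_{\ne 1}K = 0$, i.e.\ $R_{\ne 1}K = K$, and a second application of Lemma \ref{lem1-7b} to the finitely generated graded module $K$ with $K_0 = 0$ forces $K = 0$. Hence $\phi$ is an isomorphism and $Y$ is a basis of $T$.

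The routine verifications are simply that every module in sight is graded, finitely generated, and concentrated in nonzero degrees so that Lemma \ref{lem1-7b} applies. The one place requiring genuine care is the injectivity step, where the essential input is that $T$ is \emph{free}: projectivity is what lets me split the presentation and conclude that $K$ is finitely generated, making the second Nakayama argument available. I expect this splitting-and-reduce maneuver, together with the check that the reduced map $F/R_{\ne 1}F \to T/R_{\ne 1}T$ is an isomorphism and not merely a surjection, to be the main (if modest) obstacle.
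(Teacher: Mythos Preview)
Your proof is correct and follows essentially the same route as the paper's: build the free graded module on $Y$, kill the cokernel with Lemma~\ref{lem1-7b}, split the resulting surjection using graded-freeness of $T$ so that the kernel is finitely generated, and apply Lemma~\ref{lem1-7b} once more. The paper's version is terser (it simply asserts $L_0=T_0=0$ and that the reduced map is an isomorphism ``by assumption''), but the logical skeleton is identical.
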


\begin{proof} The classical proof applies; here are the details:

First note that $Y$ must be finite, since it maps mod $R_{\ne 1}$ to a basis of a free, finitely generated $R_1$-module.
Let $L$ denote the free graded $R$-module on the set $Y$. The
canonical map $\pi:L\to T$ is graded and $L_0 = T_0 = 0$;
hence its kernel $K$ and cokernel $C$ are both graded modules, and $K_0 = C_0 = 0$.
Since applying $- \otimes_R R/R_{\ne 1}$ to this map results in an
  isomorphism by assumption, $C/R_{\ne 1}C = 0$. By Lemma \ref{lem1-7b}, 
$C = 0$ and hence $\pi$ is surjective. Since $T$ is free graded, 
$\pi$ splits (in the graded category). In particular, $K$ is a direct summand of a finitely generated free module and hence finitely generated. It follows that $K/R_{\ne 1}K = 0$
as well, whence $K=0$ using the Lemma again.
\end{proof}

\begin{lem}\label{lem:Xlocalize}
Suppose $A$ is a monoid, $X$ is an $A$-set and $S\subseteq A$ is a multiplicatively closed subset. Then the natural homomorphism of $k[A]$-modules $S^{-1} k[X] \to k[S^{-1} X]$ is an isomorphism.
\end{lem}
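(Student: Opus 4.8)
The plan is to realize both sides as the very same colimit, exploiting two facts: that $k$-realization is a left adjoint, and that localization — of $A$-sets and of $k[A]$-modules alike — is computed by a colimit indexed by $S$.

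First I would record that $k[-]\colon A\text{-Sets}\to k[A]\text{-Mod}$ is left adjoint to the forgetful functor $U$ sending a $k[A]$-module to its underlying pointed set with $A$-action. Indeed, since the nonzero elements $X'$ form a $k$-basis of $k[X]$, a $k[A]$-linear map $k[X]\to M$ is the same datum as a map of pointed sets $X\to UM$ compatible with the $A$-action. In particular $k[-]$ preserves all colimits.

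Next I would describe localization as a colimit on both sides over a common index category $\mathcal{I}_S$: its objects are the elements of $S$, with one morphism $t\colon s\to s'$ for each $t\in S$ satisfying $ts=s'$, composition being multiplication. There is a functor $\mathcal{I}_S\to A\text{-Sets}$ sending every object to $X$ and the morphism $t$ to the map $x\mapsto tx$, and I claim $\colim_{\mathcal{I}_S}X\cong S^{-1}X$, the class of $x$ in the copy indexed by $s$ corresponding to $x/s$. The analogous functor $s\mapsto k[X]$, $t\mapsto(\cdot\,t)$ into $k[A]\text{-Mod}$ is precisely $k[-]$ applied to the first, and the standard colimit description of module localization gives $\colim_{\mathcal{I}_S}k[X]\cong S^{-1}k[X]$. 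Both identifications reduce to checking that the colimit relation — which identifies the images of $x$ under two parallel arrows $t_1,t_2\colon s\to s'$ exactly when $t_1 s=t_2 s$ — matches the defining relation of the localization (the ``$\exists u\in S$'' clause). The basepoint is handled automatically, since $x/s$ is the basepoint iff $ux=0$ for some $u\in S$, which is precisely when $x$ reaches the absorbing basepoint in the colimit. This relation-matching is the one place demanding care, because $A$ is not assumed cancellative, so the maps $\cdot\,t_1$ and $\cdot\,t_2$ need not agree on the nose; I expect this to be the main, though minor, obstacle.

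Combining the three identifications yields $k[S^{-1}X]\cong k[\colim_{\mathcal{I}_S}X]\cong\colim_{\mathcal{I}_S}k[X]\cong S^{-1}k[X]$, the middle isomorphism because $k[-]$ preserves colimits. I would then confirm that this composite is the natural map of the statement by tracing a generator $x/s$ through the two colimit identifications. As an alternative fitting the paper's style, one can instead identify $S^{-1}X\cong X\smsh_A S^{-1}A$ and invoke the realization formula $k[X\smsh_A S^{-1}A]\cong k[X]\otimes_{k[A]}k[S^{-1}A]$ (as used in Lemma \ref{lem:barA}) together with the affine case $k[S^{-1}A]\cong S^{-1}k[A]$; there the main obstacle is the same naturality check, now supplemented by a direct verification of the affine case on bases.
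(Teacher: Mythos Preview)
Your approach is correct but considerably more elaborate than the paper's, which dispatches the lemma in a single sentence. The paper simply observes that there is a natural map of $A$-sets $S^{-1}X \to S^{-1}k[X]$ (sending $x/s$ to the fraction $x/s$), and the adjunction you recorded at the outset converts this into a $k[A]$-module map $k[S^{-1}X] \to S^{-1}k[X]$; that map is visibly inverse to the natural map of the statement on generators. Your route---presenting both localizations as colimits over $\mathcal{I}_S$ and invoking that $k[-]$, being a left adjoint, preserves them---uses the same adjunction but only indirectly, trading a one-line construction of the inverse for a structural identification of both sides. What your approach buys is a transparent explanation of \emph{why} the isomorphism holds (both sides compute the same universal object), at the cost of the relation-matching you flagged in the non-cancellative case; the paper's approach buys brevity, at the cost of leaving the verification that the two maps compose to the identity entirely to the reader. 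Your alternative via $X\smsh_A S^{-1}A$ and the realization formula is also sound, though still a step removed from the paper's direct construction of the inverse.
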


\begin{proof}
The inverse homomorphism is obtained from the natural map of $A$-sets $S^{-1} X \to S^{-1} k[X]$ by adjunction. 
\end{proof}

\begin{lem}\label{lem:Xcancels}
If $A$ is a cancellative monoid and $X$ an $A$-set so that $k[X]$ is a free $k[A]$-module, then $X$ is cancellative.
\end{lem}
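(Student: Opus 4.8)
The plan is to reduce to the group case treated in Example \ref{ex:G-sets} by localizing $A$ at the set $S = A'$ of all nonzero elements. First I would record that $A'$ is a cancellative (unpointed) submonoid of $A$: if $a,b \in A'$ then $ab \neq 0$, since $ab = 0 = a\cdot 0$ would force $b = 0$ by cancellation. Thus $S = A'$ is multiplicatively closed, and the localization $S^{-1}A$ is the pointed group $G_+$, where $G$ is the group completion of $A'$; consequently $S^{-1}k[A] = k[S^{-1}A] = k[G]$ is a group ring. Moreover the localization map $A \to S^{-1}A = G_+$ is injective, because $a/1 = b/1$ means $ta = tb$ for some $t \in A'$, and cancellation then gives $a = b$.

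Next I would invoke Lemma \ref{lem:Xlocalize} to identify $S^{-1}k[X] \cong k[S^{-1}X]$ as $S^{-1}k[A]$-modules. Since $k[X]$ is free over $k[A]$ and localization carries free modules to free modules, $k[S^{-1}X]$ is free over $S^{-1}k[A] = k[G_+]$. Now Example \ref{ex:G-sets} applies to the $G_+$-set $S^{-1}X$ and shows that $S^{-1}X$ is a free $G_+$-set; in particular $S^{-1}X$ is cancellative, since a free set over a cancellative monoid is cancellative.

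It then remains to transport cancellativity from $S^{-1}X$ back to $X$. Suppose $ax = bx$ in $X$ with $x \neq 0$; I want $a = b$. Passing to $S^{-1}X$ gives $a\cdot(x/1) = b\cdot(x/1)$. The key point is that $x/1 \neq 0$ in $S^{-1}X$: were it zero, then $sx = 0$ in $X$ for some $s \in A'$, whereas multiplication by the monomial $s$ is injective on $k[A]$ (the map $a \mapsto sa$ is an injection of $A'$ into itself by cancellation, so $s$ is a non-zerodivisor), hence injective on the free module $k[X]$, forcing the basis element $x$ of the free $k$-module $k[X]$ to vanish --- a contradiction. Given $x/1 \neq 0$, cancellativity of $S^{-1}X$ yields $a/1 = b/1$ in $G_+$ (this also covers the case $ax = bx = 0$, since cancellation is applied to the nonzero element $x/1$), and injectivity of $A \to G_+$ gives $a = b$.

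The main obstacle I anticipate is precisely the last step: one must check that nonzero elements of $X$ stay nonzero after localization, and this is the one place where freeness of $k[X]$ --- rather than mere cancellativity of $A$ --- is essential, via the non-zerodivisor property of monomials. The remaining ingredients (that $S^{-1}A = G_+$, that localization preserves freeness, and that free $G_+$-sets are cancellative) are routine, so I would dispatch them quickly and concentrate the write-up on the localization identification and the non-vanishing of $x/1$.
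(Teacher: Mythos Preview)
Your proof is correct and takes essentially the same approach as the paper: localize at $S=A'$, identify $S^{-1}X$ with a free $G_+$-set via Lemma~\ref{lem:Xlocalize} and Example~\ref{ex:G-sets}, and pull cancellativity back to $X$. The only cosmetic difference is that the paper packages your non-zerodivisor argument as the module inclusion $k[X]\subseteq k[X]\otimes_{k[A]}k[G]=k[X_G]$ (valid because $k[X]$ is free and $k[A]\subseteq k[G]$), from which $X\subseteq X_G$ follows at once.
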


\begin{proof}
Consider the group completion of $A$, which is isomorphic to
$G_+$ for some abelian group $G$. Since $k[A]\subseteq k[G]$ and $k[X]$ is free, we have $k[X] \subseteq k[X]\otimes_{k[A]} k[G]$. By Lemma \ref{lem:Xlocalize} and \cite[Lemma 5.5]{chww-monoid}, $k[X]\otimes_{k[A]} k[G]  = k[X_G]$, where $X_G$ is the localization of $X$ at $A'$.
It follows that $X\subseteq X_G$, and the latter is a free $G_+$-set by Example \ref{ex:G-sets}.
Thus, $X$ is a cancellative $A$-set as asserted. 
\end{proof}

\begin{proof}[Proof of Theorem \ref{rivalthm:free A-set}]
By Lemma \ref{lem:Xcancels}, $X$ is a cancellative $A$-set. 
Thus Lemmas \ref{lem:barA} and \ref{lem1-7} imply that we may assume that $A$ has no
non-trivial units.

Define a partial ordering on $X$ by $x\le y$ if and only if $y=ax$ 
for some $a \in A$.  The reflexive and transitive properties are
clear. Since $X$ is cancellative, it is also symmetric:
if $x\le y$ and $y\le x$ there are $a,b\in A$ with $y = ax$, $x = by$
and hence $x = abx$. This implies that $ab=1$; since $A$ has no
nontrivial units, we have $a=b=1$ and hence $x=y$. (When $X = A$, this is the partial ordering of Remark \ref{poset}.)
As $X$ is finitely generated, there is a finite set $B$
of minimal elements of $X$ with respect to the partial ordering. In
fact, $B$ is the unique smallest collection of generators of $X$. We
will prove that $B$ forms a basis of $X$.

Let $F$ denote the free graded $A$-set on $B$. We need to show that the canonical map $F\map{\pi} X$ sending $a[b]$ to $ab$ is a bijection.
The map is surjective since $B$ generates $X$ by construction. 
Passing to $k$-realization gives a surjection of graded $k[A]$-modules,
$k[F]\onto k[X]$.
By Corollary \ref{cor1-7}, with $R=k[A]$ and $Y = B$,
it suffices to prove that this map becomes an isomorphism after 
applying $-\oo_{k[A]} k$, where $k=k[A]/k[A]_{\ne 1}$; this will show that $k[F]\to k[X]$ and hence $F\to X$, are injective. 
By construction, there is a canonical isomorphism 
$k[F]\oo_{k[A]}k\cong k[B_+]$, and, by Example \ref{ex:kAgraded},
$$
k[X] \oo_{k[A]} k = k[X] \oo_{k[A]} k[A/\fm_A] = k[X/\fm_AX].
$$
Since the natural map $B_+\to X/\fm_AX$ induced by $\pi$ is an isomorphism,
the induced map $k[B_+]\to k[X/A_+X]$ is also an isomorphism.
Thus $B$ is a basis of $X$.
\end{proof}

\goodbreak
\section{Normal flatness for monoid schemes}
\label{sec:nf}

Recall from \cite{chww-monoid} that a finitely generated
pointed monoid $A$ is {\it smooth}
if $A$ is the wedge product of a finitely generated free pointed monoid $T$ by $\Gamma_+$,
where $\Gamma$ is a finitely generated free abelian group. That is, for some $d$ and $r$:
\begin{equation*}
A = \langle t_1,\dotsc, t_d, s_1, 1/s_1,\dotsc, s_r, 1/s_r \rangle.
\end{equation*}
Note that $k[A]$ is the Laurent polynomial ring
$k[t_1,\dotsc, t_d, s_1, 1/s_1,\dotsc, s_r, 1/s_r]$ for every ring $k$.

Recall that a commutative
ring $R$ is said to be {\it normally flat} along
an ideal $J$ if each quotient $J^n/J^{n+1}$ is a projective module
over the ring $R/J$. A scheme is said to be normally flat along
a closed subscheme if locally the coordinate ring is normally flat
along the ideal defining it.

\begin{defn}
A monoid $A$ is said to be {\it normally flat} along
an ideal $I$ if each quotient $I^n/I^{n+1}$ is a free $A/I$-set.
Let $X$ be a monoid scheme of finite type with structure sheaf $\cA$;
we say that $X$ is {\it normally flat} along an equivariant closed
subscheme $Z$ if for every $x\in X$ the monoid $\cA_x$
is normally flat along the stalk of the ideal defining $Z$.
\end{defn}

\begin{prop}\label{nflat-I}
Let $I$ be a finitely generated ideal in a monoid $A$, with $A/I$ smooth.
The following are equivalent:
\begin{enumerate}
\item[1)] $A$ is normally flat along $I$.
\item[2)]  $k[A]$ is normally flat along the ideal $k[I]$
 for every commutative ring $k$.
\item[3)] $\Q[A]$ is normally flat along the ideal $\Q[I]$.
\end{enumerate}
\end{prop}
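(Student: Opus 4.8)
The plan is to establish the cyclic chain $(1)\Rightarrow(2)\Rightarrow(3)\Rightarrow(1)$, with the common technical input being a term-by-term identification of the monoid data with the ring data. Since $k[I]$ is the $k$-span of $I\setminus\{0\}$ inside $k[A]$, and a product of $n$ such generators either vanishes or lands in $I^n\setminus\{0\}$, one gets $k[I]^n=k[I^n]$ and hence
\begin{equation*}
k[I]^n/k[I]^{n+1}\;\cong\;k[I^n]/k[I^{n+1}]\;\cong\;k[I^n/I^{n+1}]
\end{equation*}
as $k[A]/k[I]=k[A/I]$-modules, for every $n\ge1$ and every commutative ring $k$. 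With this in hand, $(1)\Rightarrow(2)$ is immediate: the $k$-realization of a free $A/I$-set is a free, hence projective, $k[A/I]$-module, so freeness of each $I^n/I^{n+1}$ gives $(2)$ for all $k$ at once. The implication $(2)\Rightarrow(3)$ is just the case $k=\Q$.

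The substance is $(3)\Rightarrow(1)$, where the smoothness of $A/I$ is used decisively. Smoothness makes $A/I$ cancellative and makes $\Q[A/I]$ a Laurent polynomial ring $\Q[t_1,\dots,t_d,s_1^{\pm1},\dots,s_r^{\pm1}]$. Because $I$ is finitely generated, each $I^n/I^{n+1}$ is a finitely generated $A/I$-set, so $\Q[I^n/I^{n+1}]$ is a finitely generated $\Q[A/I]$-module, and it is projective by $(3)$ together with the displayed isomorphism. I would then invoke the Quillen--Suslin--Swan theorem that finitely generated projective modules over a Laurent polynomial ring over a field are free, upgrading ``projective'' to ``free'' over $\Q[A/I]$. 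Feeding this into Theorem \ref{rivalthm:free A-set}, applied to the cancellative monoid $A/I$, the finitely generated $A/I$-set $I^n/I^{n+1}$, and $k=\Q$, shows that $I^n/I^{n+1}$ is a free $A/I$-set. Since $n$ was arbitrary, $A$ is normally flat along $I$, which is $(1)$.

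The main obstacle is exactly this last step: Theorem \ref{rivalthm:free A-set} requires a genuinely free module, not merely a projective one, so the heart of the matter is promoting projectivity to freeness---precisely what smoothness provides via the (Laurent) polynomial ring structure and the Quillen--Suslin--Swan theory. I would route the argument through Theorem \ref{rivalthm:free A-set} rather than directly through the graded Lemma \ref{lem1-7b} and Corollary \ref{cor1-7}, since $A/I$ carries nontrivial units and so those graded tools do not apply to it verbatim; the reduction handling units is already built into the proof of Theorem \ref{rivalthm:free A-set}. The only routine points to check carefully are the ideal-power identity $k[I]^n=k[I^n]$ and the compatibility of the $A/I$-grading with the module identifications.
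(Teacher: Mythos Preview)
Your proof is correct and follows essentially the same route as the paper: the identification $k[I]^n/k[I]^{n+1}\cong k[I^n/I^{n+1}]$ gives $(1)\Rightarrow(2)\Rightarrow(3)$, and for $(3)\Rightarrow(1)$ the paper likewise uses Swan's theorem that projectives over the Laurent polynomial ring $\Q[A/I]$ are free, then appeals to Theorem~\ref{rivalthm:free A-set}. Your explicit checks that $I^n/I^{n+1}$ is finitely generated and that $A/I$ is cancellative (both needed for Theorem~\ref{rivalthm:free A-set}) are points the paper leaves implicit.
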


\begin{proof}
The $k$-realization functor commutes
with colimits, since it is left adjoint to the forgetful
functor, so $k[I]^n/k[I]^{n+1}\cong k[I^n/I^{n+1}]$. Hence (1) implies (2). It is clear that (2) implies (3).
Suppose that (3) holds, and write $J$ for $\Q[I]$.
Then each quotient $J^n/J^{n+1} \cong \Q[I^n/I^{n+1}]$ is a projective module over
the ring $\Q[A]/J=\Q[A/I]$, and, 
since all projective $\Q[A/I]$-modules are free \cite{Swan}, 
Theorem \ref{rivalthm:free A-set} implies that each $A/I$-set
$I^n/I^{n+1}$ is free. Hence (1) holds.
\end{proof}

\begin{cor}\label{nflat-Z}
Let $Z$ be a smooth equivariant closed subscheme of $X$,
a monoid scheme of finite type.  The following are equivalent:
\begin{enumerate}
\item[1)] $X$ is normally flat along $Z$.
\item[2)] $X_k$ is normally flat along $Z_k$
 for every commutative ring $k$.
\item[3)] $X_{\Q}$ is normally flat along  $Z_\Q$.
\end{enumerate}
\end{cor}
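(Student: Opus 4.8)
The plan is to reduce everything to the affine statement of Proposition \ref{nflat-I} and to invoke the locality of normal flatness in each of its three guises. Since $X$ is of finite type, I would first choose a finite cover of $X$ by affine charts $U_j = \MSpec(A_j)$ with each $A_j$ finitely generated. Because $Z$ is an equivariant closed subscheme, $Z \cap U_j = \MSpec(A_j/I_j)$ for an ideal $I_j \subseteq A_j$, finitely generated since $A_j$ is; and the hypothesis that $Z$ is smooth says precisely that each $A_j/I_j$ is smooth. Applying the $k$-realization functor to this cover, $X_k$ is covered by the affine schemes $\Spec(k[A_j])$ with $Z_k \cap \Spec(k[A_j]) = \Spec(k[A_j]/k[I_j])$, and likewise for $k=\Q$. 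Thus each of conditions (1), (2), (3) may be checked chart by chart.

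On a single chart I claim each condition of the Corollary coincides with the corresponding condition of Proposition \ref{nflat-I} for the pair $(A_j, I_j)$. For (2) and (3) this is the statement that normal flatness of an affine scheme $\Spec(R)$ along a closed subscheme $V(J)$ is the same as normal flatness of the ring $R$ along $J$; here $R = k[A_j]$ and $J = k[I_j]$, and using the identification $k[I_j]^n/k[I_j]^{n+1} \cong k[I_j^n/I_j^{n+1}]$ (a finitely generated graded $R/J$-module) one sees that pointwise projectivity of these conormal modules over $R/J$ coincides with their projectivity as $R/J$-modules. Granting this together with the analogous locality statement for (1), Proposition \ref{nflat-I} gives that the three conditions are equivalent on every chart $U_j$; since each condition holds on $X$ (resp. on $X_k$, resp. on $X_\Q$) if and only if it holds on every member of the cover, the three conditions are equivalent globally. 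The implication $(2)\Rightarrow(3)$ is in any case immediate by taking $k=\Q$.

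The main obstacle is the locality statement for (1): that the stalkwise definition of normal flatness of $X$ along $Z$, restricted to the chart $U_j$, is equivalent to the global assertion that $A_j$ is normally flat along $I_j$. The easy direction is that global normal flatness localizes, since each free $A_j/I_j$-set $I_j^n/I_j^{n+1}$ remains free after localizing at a prime $\fp$, compatibly with $(I_j^n/I_j^{n+1})_\fp \cong (I_{j,\fp})^n/(I_{j,\fp})^{n+1}$, which follows from Lemma \ref{lem:Xlocalize}. For the converse I would evaluate the stalkwise hypothesis at the closed point: when $A_j/I_j$ has no nontrivial units the stalk there is $A_j$ itself, so the hypothesis already yields the global statement, while in general $A_j/I_j$ is smooth with unit group a free abelian group $\Gamma$, and one first passes to the quotient by $\Gamma$ using Lemmas \ref{lem:barA} and \ref{lem1-7}, reducing freeness of the conormal sets over $A_j/I_j$ to freeness after killing units and hence to a condition visible at the closed point. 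The delicate ingredient in this last step is supplying the cancellativity of the conormal sets needed to apply Lemma \ref{lem1-7}, which can be extracted from their $k$-realizations in the spirit of Lemma \ref{lem:Xcancels}; reconciling the point set of the monoid scheme with that of its realization is the only genuinely subtle point.
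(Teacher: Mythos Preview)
Your approach works, but it is more elaborate than the paper's two-line argument and introduces a detour you then have to undo. The paper does not pass through affine charts at all: it works directly at stalks. Given $x\in X$, set $A=\cA_x$ and let $I$ be the stalk at $x$ of the ideal sheaf of $Z$; then condition~(1) at $x$ is literally the statement that $A$ is normally flat along $I$, while conditions~(2) and~(3), being preserved under the localization $X_k \to \Spec(k[\cA_x])$, specialize to the ring-theoretic statements for $k[A]$ and $k[I]$. Proposition~\ref{nflat-I} then gives the equivalence at each $x$, and (1)$\Rightarrow$(2)$\Rightarrow$(3) are trivial anyway.

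The ``main obstacle'' you identify dissolves once you recall that every affine monoid scheme $\MSpec(A_j)$ has a unique closed point, namely $\fm_{A_j}$, and the stalk there is $A_j$ itself (localizing at $\fm_{A_j}$ inverts only the elements of $A_j^\times$). This closed point lies in $Z$ since $I_j\subseteq\fm_{A_j}$. Evaluating the stalkwise hypothesis~(1) at that single point therefore already yields that $A_j$ is normally flat along $I_j$; no case distinction on the units of $A_j/I_j$ is needed, and Lemmas~\ref{lem:barA}, \ref{lem1-7}, and~\ref{lem:Xcancels} play no role. In effect your chartwise reduction, once streamlined, is exactly the paper's stalkwise reduction specialized to the closed point of each chart.
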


\begin{proof}
Again, it suffices to prove that (3) implies (1).
As the conditions are local, it suffices to pick $x\in X$
and assume that $X=\MSpec(A)$ and $Z=\MSpec(A/I)$ for
$A=\cA_x$, with $A/I$ smooth.
This case is covered by Proposition \ref{nflat-I}.
\end{proof}

\begin{thm}\label{thm:ros}
Let $X$ be a separated cancellative torsionfree monoid scheme of
finite type, embedded as an equivariant closed subscheme in a 
smooth toric scheme.
Then there is a sequence of blow-ups along smooth equivariant
centers $Z_i\subset X_i$, $0\le i\le n-1$,
\[
Y = X_n\to \dotsm \to X_0 = X
\]
such that $Y$ is smooth  and each $X_i$ is normally flat along $Z_i$.
In addition, for any commutative ring $k$,
each $(X_i)_k$ is normally flat along $(Z_i)_k$.
\end{thm}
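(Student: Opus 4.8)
The plan is to apply canonical embedded resolution of singularities to the $\Q$-realization $X_\Q$ and then transport the outcome to the monoid scheme, and to every realization $(X_i)_k$, by means of Corollary \ref{nflat-Z}. Since $X$ is separated, cancellative and torsionfree, the given embedding $X\hookrightarrow W$ into a smooth toric monoid scheme realizes over $\Q$ as a torus-invariant closed immersion $X_\Q\hookrightarrow W_\Q$ of a (reduced, possibly singular) toric variety into a smooth one. Because $\Q$ has characteristic zero, I would invoke the Bierstone--Milman algorithm \cite{BM} for the pair $X_\Q\subset W_\Q$: it yields a finite sequence of blow-ups of $W_\Q$ along smooth centers contained in the successive strict transforms of $X_\Q$, terminating in a smooth strict transform, and---as is standard for Hironaka-style resolution---each such center is \emph{permissible}, meaning that the relevant strict transform is normally flat along it. Since the chosen centers lie in the strict transforms, the strict transform of $X_\Q$ at each stage is exactly the blow-up of the previous strict transform along the center, so over $\Q$ we obtain a tower of blow-ups $Y_\Q=(X_n)_\Q\to\dots\to(X_0)_\Q=X_\Q$ with $Y_\Q$ smooth.

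The key point is to make this tower equivariant, so that it descends to the monoid scheme. For this I would use the functoriality of the Bierstone--Milman algorithm with respect to smooth morphisms. Writing $T$ for the torus acting on $W_\Q$, the action map $\sigma$ and the projection $p$ from $T\times W_\Q$ to $W_\Q$ are both smooth, and $\sigma^{-1}(X_\Q)=p^{-1}(X_\Q)$ because $X_\Q$ is $T$-invariant; functoriality then forces the two pullbacks of the resolution tower to agree, which is precisely the statement that every center $(Z_i)_\Q$ is $T$-invariant. A $T$-invariant ideal sheaf on a toric variety is monomial, hence of the form $\Q[I]$ for a sheaf of monoid ideals $I$, so each smooth $T$-invariant center $(Z_i)_\Q$ is the $\Q$-realization of a smooth equivariant closed subscheme $Z_i\subset X_i$ of the monoid scheme. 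As the blow-up of a monoid scheme along an equivariant center is defined and commutes with realization, the tower lifts to a sequence $Y=X_n\to\dots\to X_0=X$ of blow-ups along smooth equivariant centers, with $Y$ smooth (smoothness being detectable on $Y_\Q$).

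Finally, normal flatness is transferred by Corollary \ref{nflat-Z}. At each stage $(X_i)_\Q$ is normally flat along the smooth subscheme $(Z_i)_\Q$ by permissibility, and $Z_i$ is a smooth equivariant closed subscheme of the finite-type monoid scheme $X_i$; the corollary then gives simultaneously that $X_i$ is normally flat along $Z_i$ and that $(X_i)_k$ is normally flat along $(Z_i)_k$ for every commutative ring $k$. This is where the combinatorial rigidity of monoid schemes is decisive: we carry out resolution only in characteristic zero, yet obtain normal flatness in all characteristics at no extra cost.

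The main obstacle I expect is not the existence of the resolution but the equivariance-and-descent step: one must check carefully that the Bierstone--Milman centers are genuinely $T$-invariant (so that the smooth-morphism functoriality argument applies cleanly) and that $T$-invariant permissible centers correspond---compatibly with blow-up and with $\Q$-realization---to smooth equivariant centers of the monoid schemes $X_i$. A secondary point requiring a precise citation is that the algorithm's centers are normally flat, not merely smooth; this permissibility is built into Hironaka-style resolution but should be pinned down in \cite{BM}.
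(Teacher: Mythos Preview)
Your proposal is correct and follows essentially the same route as the paper. The only difference is packaging: the paper cites \cite[Theorem 14.1]{chww-monoid} (in the case $k=\Q$) for the existence of the tower of monoid-scheme blow-ups along smooth equivariant centers $Z_i\subset X_i$ with each $(X_i)_\Q$ normally flat along $(Z_i)_\Q$, whereas you sketch that argument directly (Bierstone--Milman on $X_\Q\subset W_\Q$, equivariance via smooth-functoriality, descent of $T$-invariant centers to the monoid scheme). Both proofs then finish identically by invoking Corollary~\ref{nflat-Z} to promote normal flatness from $\Q$ to all $k$. The ``equivariance-and-descent'' obstacle you flag is precisely what \cite[Theorem 14.1]{chww-monoid} (and the supporting material in \cite{chww-monoid}) establishes, so in the paper's context there is nothing further to check.
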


\begin{proof}
The case $k=\Q$ of Theorem 14.1 in our paper \cite{chww-monoid}
states that there is a sequence of blowups along smooth equivariant
centers $Z_i\subset X_i$, such that
each $(X_i)_\Q$ is normally flat along $(Z_i)_\Q$.
By Corollary \ref{nflat-Z}, each $X_i$ is normally flat along $Z_i$,
and each $(X_i)_k$ is normally flat along $(Z_i)_k$.
\end{proof}

\goodbreak
\section{A descent theorem for functors via realizations}
\label{sec:KHdescent}

In this section, we recall the notion of $cdh$ descent,
establish a technical result generalizing
\cite[Theorem 14.3]{chww-monoid} to the present context,
and use it to promote several results in {\it op.\ cit.}\ to
our situation.

In more detail, let $\cMpctf$ denote the category of separated pctf monoid schemes of
finite type. Fix a commutative regular ring $k_0$ of finite Krull dimension all of whose residue
fields are  infinite,
and suppose $k_0 \subseteq k$ is a flat extension 
of commutative rings.
We will show that if $F$ is a presheaf of spectra on the category $Sch/k$ of 
separated $k$-schemes essentially of finite presentation
that  satisfies a weak version of $cdh$ descent (see Definition \ref{defn:weakcdh}), then
the presheaf $\cF$ on $\cMpctf$ defined by 
$\cF(X)=F(X_{k})$ satisfies $cdh$ descent; see Theorem \ref{thm:bigdescent}.

We first recall the necessary definitions from \cite{chww-monoid}
and \cite{VVcdh}.
By a {\it cd structure} on a category,
we mean a family of distinguished commutative squares
\begin{equation} \label{sq:cd}
\xymatrix{D \ar[r] \ar[d] & Y \ar[d]^p \\
C \ar[r]^e & X.}
\end{equation}

\goodbreak
\begin{defn}\label{defn:cdh}
A cartesian square \eqref{sq:cd} in $\cMpctf$ is called
\begin{enumerate}
\item
   an {\em abstract blow-up square} if $p$ is proper, $e$ is an
    equivariant closed immersion, and $p$ maps the open complement $Y
    \setminus D$ isomorphically onto $X \setminus C$;
    
\item
a {\em Zariski square} if $p$ and $e$ form an open cover of $X$;
\item 
a {\em $cdh$ square} if it is either a Zariski or an abstract blow-up square. 
\end{enumerate}
The $cdh$ topology on $\cMpctf$ is the topology generated by the
$cdh$ squares. It is a bounded, complete and regular $cd$ structure
by \cite[Theorems 12.7 and 12.8]{chww-monoid}.
\end{defn}

A presheaf of spectra $\cF$ on $\cMpctf$ satisfies the
{\em Mayer-Vietoris property} for some family $\cC$ of cartesian squares
if $\cF(\emptyset)=\ast$ and the application of $\cF$ to each member of the
family gives a homotopy cartesian square of spectra.

\begin{defn}\label{cdh-descent}
Let $\cF$ be a presheaf of spectra on $\cMpctf$. We say that $\cF$
satisfies {\em $cdh$ descent} on $\cMpctf$ if the canonical map
$\cF(X)\to\bH_{\cdh}(X,\cF)$ is a weak equivalence of spectra
for all $X$. Here $\bH_{\cdh}(-,\cF)$ is the fibrant replacement of
$\cF$ in the model structure of \cite{JardineSPS} and
\cite{JardineGen}. By \cite[Proposition 12.10]{chww-monoid}, this is
equivalent to $\cF$ satisfying the Mayer-Vietoris property for
the $cdh$ structure; i.e., for both
the family of Zariski squares and the family of abstract blow-up squares.
\end{defn}

Here is a restatement of \cite[Definition 13.8]{chww-monoid}.

\begin{defn}\label{defn:weakcdh}
Let $k$ be a commutative ring and let $Sch/k$ be the category of
separated $k$-schemes, essentially of finite presentation. 
A presheaf of spectra $F$ on $Sch/k$ is said to satisfy
{\em weak $cdh$ descent} if it satisfies the Mayer-Vietoris property
for all open covers, finite abstract blow-ups, and blow-ups along
regularly embedded subschemes.
\end{defn}

\begin{subex}\label{ex:KH-weakcdh}
The presheaf $\cKH$ satisfies weak $cdh$ descent.  This was
observed in \cite[Example 13.11]{chww-monoid}; the hypothesis there
that the ring $k$ be Noetherian is not needed.
Indeed, the Mayer-Vietoris property for finite abstract blow-ups
(excision for ideals and invariance under nilpotent extensions) is proved in
\cite{WeibelKH} for general rings, and Mayer-Vietoris for open covers as well
as Mayer-Vietoris for blow-ups along regular sequences applies to the category
of quasi-separated and quasi-compact schemes.

We will see other examples
in Theorems \ref{thm:bigdescent} and \ref{E_L} below.
\end{subex}

\begin{thm} \label{thm:descent}
Let $k$ be a commutative regular  ring of finite Krull dimension, with infinite residue fields,
and $F$ a presheaf of spectra on $Sch/k$.
If $F$ satisfies weak $cdh$ descent on $Sch/k$, then the presheaf
$\cF(X) = F(X_k)$ satisfies $cdh$ descent on $\cMpctf$.
\end{thm}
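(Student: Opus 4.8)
The plan is to reduce $cdh$ descent for $\cF$ to the Mayer--Vietoris property for the two generating families of the $cdh$ cd-structure and then to transport each family to $Sch/k$ along the realization functor $X\mapsto X_k$. By Definition \ref{cdh-descent}, since the $cdh$ cd-structure on $\cMpctf$ is complete, bounded and regular, it suffices to show that $\cF(X)=F(X_k)$ sends every Zariski square and every abstract blow-up square in $\cMpctf$ to a homotopy cartesian square of spectra. The Zariski case is immediate: realization carries an open cover of a monoid scheme to a Zariski open cover of $k$-schemes, so a Zariski square realizes to an open-cover square in $Sch/k$, and the Mayer--Vietoris property for open covers is part of weak $cdh$ descent (Definition \ref{defn:weakcdh}).

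The substance is the abstract blow-up case. An abstract blow-up square of monoid schemes realizes to an abstract blow-up square of $k$-schemes, but the realized $p_k$ is neither finite nor a regular blow-up in general, so weak $cdh$ descent does not apply directly; this is what forces a reduction. I would argue by Noetherian induction on $\dim X$ (legitimate because the cd-structure is bounded), assuming the Mayer--Vietoris property for $\cF$ on all monoid schemes of strictly smaller dimension. Given an abstract blow-up square with equivariant center $C\subset X$ and proper $p\colon Y\to X$ that is an isomorphism away from $C$, the idea is to dominate it by, and compare it with, the blow-ups furnished by Theorem \ref{thm:ros}: after embedding $X$ equivariantly in a smooth toric scheme, resolution produces a finite tower of blow-ups along smooth equivariant centers $Z_i$ along which each intermediate scheme is normally flat. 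Using this tower, the general square is reduced to two manageable types --- (i) squares arising from a single blow-up along a smooth normally flat center, and (ii) finite abstract blow-up squares (coming from reduced structures and closed covers) --- with all remaining contributions supported on the lower-dimensional loci $C$, $Z_i$, and the exceptional divisors, which are handled by the inductive hypothesis.

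For type (ii), realization gives a finite abstract blow-up square of $k$-schemes, and the corresponding clause of weak $cdh$ descent applies directly. For type (i), Corollary \ref{nflat-Z} --- which, crucially, now holds for an \emph{arbitrary} commutative ring $k$ because Proposition \ref{nflat-I} was reduced to the case of $\Q$ via Theorem \ref{rivalthm:free A-set} --- shows that $X_k$ is normally flat along $Z_k$. Since $Z$ is smooth, $Z_k$ is a smooth, hence regular (as $k$ is regular), closed subscheme, and comparing the blow-up of $X_k$ along $Z_k$ with the blow-up of the ambient smooth realization exhibits the realized square as, or reduces it to, a blow-up along a regularly embedded subscheme, to which the regular-center clause of weak $cdh$ descent applies.

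The main obstacle is precisely this abstract blow-up reduction: organizing the resolution data of Theorem \ref{thm:ros} into an induction that expresses the Mayer--Vietoris property for a \emph{general} abstract blow-up square in terms of the special squares (i) and (ii) together with strictly lower-dimensional data, and verifying that a normally flat blow-up along a smooth center genuinely realizes to a regular blow-up in $Sch/k$. It is in this geometric comparison, where one must place the realized center in good position, that the hypotheses on $k$ --- regularity, finite Krull dimension, and infinite residue fields --- enter, replacing the ``contains a field'' hypothesis of \cite[Theorem 14.3]{chww-monoid}.
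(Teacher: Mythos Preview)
Your outline is essentially correct and follows the same strategy as the paper: the new content over \cite[Theorem~14.3]{chww-monoid} is precisely that Corollary~\ref{nflat-Z} holds over any base ring and that Theorem~\ref{thm:ros} supplies the resolution tower without assuming a base field. The paper's proof is organized somewhat differently from your direct Noetherian induction: it first establishes the Mayer--Vietoris property for the ``nice'' blow-up squares of \cite[Definition~13.5]{chww-monoid} (this is \cite[Lemma~13.9]{chww-monoid}), then deduces $scdh$ descent via \cite[12.13]{chww-monoid}, and finally upgrades to full $cdh$ descent via \cite[Theorem~14.2]{chww-monoid} combined with Theorem~\ref{thm:ros}.

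Your sketch is vaguest exactly where you acknowledge the obstacle lies. The assertion that a normally flat blow-up along a smooth center ``exhibits\dots or reduces\dots to a blow-up along a regularly embedded subscheme'' is not a formality: a smooth $Z_k$ inside a singular $X_k$ is not regularly embedded, and the comparison with the ambient smooth blow-up is the substantive content of \cite[Lemma~13.9]{chww-monoid}. It is in that lemma, not merely in ``placing the center in good position,'' that the three hypotheses on $k$ are consumed for distinct reasons: regularity makes $X_k$ Cohen--Macaulay via Hochster's theorem, finite Krull dimension is needed for convergence of the local--global spectral sequence, and the infinite residue field hypothesis enters through \cite[Lemma~13.7]{chww-monoid}. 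You have identified the correct inputs and the correct architecture, but you should be aware that the step you flag as the main obstacle really does require unpacking those references rather than a one-line reduction.
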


\begin{proof}
The proof is almost identical to the proof of Theorem 14.3 in our 
earlier paper \cite{chww-monoid}, which assumed that $k$ contains a field.

We merely point out the adjustments necessary for the proof to work for $k$.
Although the proof in {\it loc.\,cit.} makes no direct use
of the hypothesis that $k$ contains a field, this hypothesis
is buried in the references to Lemma 13.9 and Theorem 14.2 of
\cite{chww-monoid}. (Theorem 13.3 and Proposition 13.6 are referred to,
but apply as stated).

Although the hypothesis of Lemma 13.9 is that $k$ is a commutative
regular domain containing an infinite field, 
the proof goes through if we only assume
that $k$ is a commutative regular  ring of finite Krull dimension
such that every residue
field of $k$ is infinite.  The hypothesis that $k$ is
regular is needed so that $X_k$ is a Cohen-Macaulay scheme for every
toric monoid scheme $X$ (by \cite{Hochster}).  The finite Krull dimension 
hypothesis is needed in Lemma 13.9 in order that the
local-global spectra sequence converges (and so the proof of Lemma
13.9 is flawed as written; see the remark below for the correction). 

The last hypothesis (which holds in our setting since we assume every residue
field of $k$ is infinite) is needed to use Lemma ~13.7. 

Finally, the hypothesis in Theorem 14.2 that $k$ contains a field is
required to make use of the Bierstone-Milman theorem
\cite[Theorem 14.1]{chww-monoid}.  Replacing it by the more general Theorem
\ref{thm:ros} in this paper makes the proof go through.
\end{proof}

\begin{subrem} 
As explained in the proof above, the hypothesis that $k$ have  finite Krull dimension
is missing in Lemma~13.9 and Theorem 14.3 of \cite{chww-monoid}. 
This however does not affect the main results of
\cite{chww-monoid}, since $K$-theory commutes with filtering colimits
and, by Popescu's theorem \cite{popescu}, every regular ring containing a
field is a filtering colimit of regular rings of finite Krull dimension. 
\end{subrem}

\begin{thm}\label{thm:bigdescent}
Let $k_0$ be a commutative regular ring of finite Krull dimension
all of whose residue fields are infinite,
let $i:k_0\subset k$ be a flat extension of commutative rings.
If $F$ is a presheaf of spectra on $Sch/k$
satisfying weak $cdh$ descent on $Sch/k$, then the presheaf
$\cF_k$ on $\cMpctf$ defined by $\cF_k(X) = F(X_k)$   satisfies $cdh$ descent. 
\end{thm}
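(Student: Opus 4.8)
The plan is to reduce the statement to Theorem \ref{thm:descent} by base change along the flat map $i\colon k_0\subset k$. Since $i$ is flat and separated schemes essentially of finite presentation over $k_0$ pull back to separated schemes essentially of finite presentation over $k$, base change defines a functor $(-)\times_{\Spec k_0}\Spec k\colon Sch/k_0\to Sch/k$. Precomposing $F$ with this functor produces a presheaf of spectra $G$ on $Sch/k_0$, given on objects by $G(Z)=F(Z\times_{\Spec k_0}\Spec k)$. The key observation will be that the auxiliary presheaf $G$ sees the ring $k_0$, to which Theorem \ref{thm:descent} applies, while still computing the values of $\cF_k$.

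The first and principal step is to check that $G$ satisfies weak $cdh$ descent on $Sch/k_0$ in the sense of Definition \ref{defn:weakcdh}. Here the point is that flat base change carries each of the three kinds of distinguished squares to a square of the same kind: the base change of a cartesian square is cartesian, an open cover pulls back to an open cover (with $(U\cap V)\times_{\Spec k_0}\Spec k=U_k\cap V_k$), and properness together with the property of being a closed immersion are stable under base change, so finite abstract blow-up squares are preserved. The only nonformal verification is that blow-ups along regularly embedded subschemes are preserved, and this is exactly where flatness of $i$ is used: a regular immersion pulls back under a flat morphism to a regular immersion, and blow-up commutes with flat base change, so the base change of such a square is again a blow-up square along a regular embedding. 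Consequently $(-)\times_{\Spec k_0}\Spec k$ sends each distinguished square in $Sch/k_0$ to a distinguished square of the same type in $Sch/k$, on which $F$ has the Mayer-Vietoris property by hypothesis; hence $G$ has the Mayer-Vietoris property for all three families, i.e.\ $G$ satisfies weak $cdh$ descent on $Sch/k_0$.

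Since $k_0$ is a commutative regular ring of finite Krull dimension all of whose residue fields are infinite, Theorem \ref{thm:descent} now applies to $G$ over $k_0$, showing that the presheaf $X\mapsto G(X_{k_0})$ on $\cMpctf$ satisfies $cdh$ descent. It remains to identify this presheaf with $\cF_k$, which follows from the compatibility of realization with flat base change: for an affine monoid scheme $\MSpec(A)$ one has $\MSpec(A)_{k_0}\times_{\Spec k_0}\Spec k=\Spec\bigl(k_0[A]\oo_{k_0}k\bigr)=\Spec(k[A])=\MSpec(A)_k$, and this glues to a natural isomorphism $X_{k_0}\times_{\Spec k_0}\Spec k\cong X_k$ for every $X$ in $\cMpctf$. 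Therefore $G(X_{k_0})=F(X_{k_0}\times_{\Spec k_0}\Spec k)=F(X_k)=\cF_k(X)$, and so $\cF_k$ satisfies $cdh$ descent. The hard part is the second step, and within it the single delicate input is the stability of regular-embedding blow-up squares under flat base change; once $G$ is recognized as a weak-$cdh$-descent presheaf on $Sch/k_0$, the rest of the argument is formal.
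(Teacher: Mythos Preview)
Your proposal is correct and is essentially the paper's own argument: both form the pushforward presheaf $i_*F=G$ on $Sch/k_0$, observe that flat base change preserves the three kinds of distinguished squares so that $G$ inherits weak $cdh$ descent, and then apply Theorem \ref{thm:descent}. Your write-up is somewhat more detailed (in particular you spell out the identification $G(X_{k_0})\cong\cF_k(X)$ and isolate the regular-embedding case as the one genuinely requiring flatness), but the strategy is identical.
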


\begin{proof}
Let $i_*F$ denote the direct image presheaf on $Sch/k_0$,
defined by $i_*F(S)=F(S_k)$.
Since the flat 
basechange $i_*$ preserves open immersions, closed immersions,
surjective morphisms, finite morphisms, regular closed immersions, and
blow-ups, it also preserves open covers, finite abstract blow-ups, and
blow-ups along regular closed immersions.
Because $F$ satisfies weak $cdh$ descent on $Sch/k$,
$i_*F$ satisfies weak $cdh$ descent on $Sch/k_0$.
The result is now immediate from Theorem \ref{thm:descent}.
\end{proof}

Given a commutative ring $k$,
let $\cK_k$ and $\cKH_k$ denote the presheaves of spectra on $\cMpctf$
sending $X$ to $K(X_k)$ and $KH(X_k)$, respectively.
Here is the analogue of \cite[Corollary 14.5]{chww-monoid}.

\begin{cor}\label{cor:khdescent}
Let $k_0$ be a commutative regular ring of finite Krull dimension
all of whose residue fields are infinite, 
and let $k$ be a commutative flat 
$K$-regular $k_0$-algebra.  Then the presheaf of spectra 
$\cKH_k$ satisfies $cdh$ descent on $\cMpctf$, and the maps
\[
\cKH_k(X) \to \bH_{\cdh}(X, \cKH_k) \leftarrow \bH_{\cdh}(X, \cK_k)
\]
are weak  equivalences for all $X$ in $\cMpctf$.
\end{cor}

\begin{proof}
As observed in Example \ref{ex:KH-weakcdh},
the presheaf $\cKH$ on $Sch/k$ satisfies weak $cdh$ descent. 
Thus the first statement follows from Theorem \ref{thm:bigdescent}, 
and therefore the left arrow above is an equivalence.

For the other arrow, we mimick the argument we used in
\cite[Example 13.11]{chww-monoid}.
By \cite[Theorem 11.1]{chww-monoid},  every $cdh$ covering in
$\cMpctf$ admits a refinement consisting of smooth monoid schemes. If $X$ is smooth, 
then $X_k$ is locally the spectrum of a Laurent
polynomial ring over $k$. Since $k$ is assumed to be $K$-regular, 
we conclude that for smooth $X$, $\cK_k(X)\to \cKH_k(X)$ is an 
equivalence, using Mayer-Vietoris for open covers and the Fundamental Theorem
of $K$-theory.  The result for general $X$ in $\cMpctf$ now follows from the
Mayer-Vietoris property for $\bH_{\cdh}(X,\cK_k)$ and resolution of
singularities for monoid schemes.  
\end{proof}

\section{Presheaves of spectra and dg categories} \label{sec:dg} 

Let $k_0$ be a commutative ring and 
let $E$ be a functor from small dg $k_0$-categories to spectra. We may use $E$ in two different ways to obtain a functor on $\Sch/k_0$. First, 
by regarding a $k_0$-algebra as a dg category with just one object, 
we may restrict $E$ to a functor of commutative $k_0$-algebras. This restriction induces a presheaf $\cE$ on $\Sch/k_0$ by 
mapping $S\mapsto E(\cO(S))$ on $\Sch/k_0$, and  
its fibrant replacement is $\mathbb{H}_{\zar}(-,\cE)$. 
On the other hand, we may simply compose $E$ with the functor that sends a scheme $S$ to the dg $k_0$-category 
$\perf(S)$ of perfect complexes on $S$, obtaining the functor $E(\perf(-))$; 
see \cite[Example 2.7]{chsw} or \cite[Section 2.4]{RC} for a precise definition. 

Following Sections ~3 and ~5 of \cite{Keller}, we say $E$ is 
dg \emph{Morita invariant} if it sends dg Morita equivalences  
to weak equivalences; we say that $E$ \emph{localizing} if 
it sends short exact sequences of dg categories to fibration sequences. 

\begin{lem} \label{lem39}
If $E$ is a functor from small dg $k_0$-categories to spectra that is dg Morita invariant and localizing, then  
the functors $\mathbb{H}_{\zar}(-,\cE)$ and $E(\perf(-))$ are equivalent.
\end{lem}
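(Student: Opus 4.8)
The plan is to show that both functors $\mathbb{H}_{\zar}(-,\cE)$ and $E(\perf(-))$ define Zariski-local presheaves of spectra on $\Sch/k_0$ that agree on affines, and then conclude that they must agree everywhere. The guiding principle is that $\mathbb{H}_{\zar}(-,\cE)$ is by construction the universal Zariski-fibrant presheaf receiving a map from $\cE$, so it suffices to establish two things: (i) that $E(\perf(-))$ already satisfies Zariski descent, and (ii) that the natural comparison map $\cE \to E(\perf(-))$ is an equivalence on affine schemes. Once both hold, the map $\cE \to E(\perf(-))$ extends to a map from the Zariski-fibrant replacement $\mathbb{H}_{\zar}(-,\cE)$, and (i) forces this extension to be an equivalence.

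For step (ii), I would use that on an affine scheme $S = \Spec(R)$ the dg category $\perf(S)$ is dg Morita equivalent to $R$ regarded as a one-object dg category, since $R$ is a compact generator of $\perf(S)$. Because $E$ is assumed dg Morita invariant, $E(\perf(S)) \simeq E(R) = \cE(S)$, and this equivalence is natural in $S$. This identifies the comparison map $\cE \to E(\perf(-))$ on the affine site.

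The main obstacle, and the step where the localizing hypothesis does its work, is step (i): showing $E(\perf(-))$ satisfies Zariski descent. For an open cover $X = U \cup V$ with intersection $W = U \cap V$, the relevant input is the localization (or gluing) sequence relating $\perf$ of these schemes. The key fact is that $\perf(X)$, $\perf(U)$, $\perf(V)$, and $\perf(W)$ fit into short exact sequences of dg categories --- concretely, the fiber of the restriction $\perf(X) \to \perf(U)$ is the full subcategory of complexes supported on the closed complement, and restriction along the two pieces of the cover is compatible with this. Applying the localizing functor $E$ turns these exact sequences into fibration sequences of spectra, and a comparison of the resulting long exact sequences yields the homotopy cartesian square
\[
\xymatrix{
E(\perf(X)) \ar[r] \ar[d] & E(\perf(U)) \ar[d] \\
E(\perf(V)) \ar[r] & E(\perf(W)),
}
\]
which is exactly the Mayer--Vietoris property for the Zariski topology. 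I expect the care here to lie in correctly identifying the relevant short exact sequences of dg categories and checking that they are genuinely exact in the sense required to invoke the localizing property; this is where one leans on the references to \cite{Keller} and the standard theory of perfect complexes on schemes. With (i) and (ii) in hand, the universal property of Zariski-fibrant replacement completes the argument.
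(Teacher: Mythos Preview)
Your proposal is correct and follows essentially the same approach as the paper: use dg Morita invariance to identify $\cE$ with $E(\perf(-))$ on affines, use the localizing hypothesis to get Zariski descent for $E(\perf(-))$, and conclude via fibrant replacement. The only differences are cosmetic: the paper cites \cite[Theorem 3.1]{Tabuada} for the Zariski (in fact Nisnevich) descent step rather than sketching the Mayer--Vietoris argument directly, and it phrases the conclusion as a zigzag $\mathbb{H}_{\zar}(S,\cE) \map{\sim} \mathbb{H}_{\zar}(S,E(\perf(-))) \lmap{\sim} E(\perf(S))$ rather than invoking the universal property of fibrant replacement.
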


\begin{proof}
For a commutative ring  $R$, the functor $R \to \perf(R)$ is a dg Morita equivalence and thus
$E(R)\map{\sim} E(\perf(R))$ is an equivalence. It follows that
the natural transformation of presheaves on $\Sch/k_0$ from $E(\cO(-))$ to $E(\perf(\cO(-))$ 
is an equivalence Zariski locally. The induced functor on fibrant
replacements is thus also an equivalence.  Since $E$ is localizing, $S\mapsto E(\perf(S))$ satisfies Zariski descent (and even Nisnevich descent) by 
\cite[Theorem 3.1]{Tabuada}. 
We thus get a pair of natural equivalences
$$
\mathbb{H}_{\zar}(S, \cE) \map{\sim}  \mathbb{H}_{\zar}(S, E(\perf(-))) \lmap{\sim}  E(\perf(S))
$$
for all $S \in \Sch/k_0$.
\end{proof}

From now on, if $S\in\Sch/k_0$ and $E$ is a functor from small 
dg $k_0$-categories that is dg Morita invariant and localizing, 
by $E(S)$ we shall always mean $E(\perf(S))$.

\goodbreak
\smallskip

Given two small dg $k_0$-categories $\cA$
and $\cB$, we write $\cA\oo_{k_0}\cB$ for their
dg tensor product (as defined in \cite[Sec.\,2.3]{Keller}).

\begin{thm}\label{E_L}
Let $k_0$ be a commutative ring, $E$ a functor from small
dg $k_0$-categories to spectra and $\Lambda$ a flat 
(not necessarily commutative) $k_0$-algebra. Assume
\begin{enumerate}
\item $E$ is dg Morita invariant and
\item $E$ is localizing.
\end{enumerate}
Then the presheaf $E_\Lambda$ of spectra on $Sch/k_0$, sending $S$ to
$$
E_\Lambda(S) = E(\perf(S)\otimes_{k_0} \Lambda),
$$
satisfies the Mayer-Vietoris property with respect
to open covers (Zariski descent) and blow-ups of regularly embedded subschemes. 

Assume furthermore that
\begin{enumerate}
\item[(3)] the restriction of $E$ to $k_0$-algebras satisfies excision for ideals 

and is invariant under nilpotent ring extensions.
\end{enumerate}
Then $E_\Lambda$ satisfies weak $cdh$ descent on $\Sch/k_0$. In particular, by Theorem \ref{thm:descent}, 
the functor $\cE_\Lambda(X)=E_\Lambda(X_{k_0})$ satisfies $cdh$ descent on $\cMpctf$.
\end{thm}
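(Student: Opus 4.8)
The plan is to deduce both Mayer--Vietoris properties from the general behaviour of localizing invariants of dg categories, and to treat finite abstract blow-ups separately using the excision hypothesis. The first move is to reduce the whole statement to a property of a single localizing invariant. Set $E'(\cA) = E(\cA\oo_{k_0}\Lambda)$ for a small dg $k_0$-category $\cA$, so that $E_\Lambda(S)=E'(\perf(S))$. The key point is that, because $\Lambda$ is flat over $k_0$, the endofunctor $-\oo_{k_0}\Lambda$ of small dg $k_0$-categories preserves dg Morita equivalences and carries short exact sequences of dg categories to short exact sequences. Granting this, $E'$ inherits from $E$ the properties of being dg Morita invariant and localizing, and hypotheses (1),(2) are available for $E'$.

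\textbf{This preservation of short exact sequences is the main obstacle.} One must check that $-\oo_{k_0}\Lambda$ commutes with Drinfeld quotients (a colimit-type construction, so this is essentially formal) and, crucially, that it keeps a fully faithful dg embedding fully faithful after tensoring; this is exactly where flatness of $\Lambda$ is used, since it makes $-\oo_{k_0}\Lambda$ exact on the relevant mapping complexes. Once $E'$ is known to be localizing and dg Morita invariant, the first assertion follows from standard descent for localizing invariants of perfect complexes. Applying Lemma \ref{lem39} to $E'$ (more precisely, the Nisnevich descent for $E'(\perf(-))$ established in its proof via \cite{Tabuada}) shows that $E_\Lambda=E'(\perf(-))$ satisfies Nisnevich, hence Zariski, descent, which is the Mayer--Vietoris property for open covers. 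For a blow-up $Y\to S$ along a regularly embedded centre $Z$, the blow-up formula for perfect complexes---a semiorthogonal decomposition of $\perf(Y)$ with factors $\perf(S)$ and copies of $\perf(Z)$, compatible with the analogous decomposition for the exceptional divisor---together with the additivity of localizing invariants turns the blow-up square into a homotopy cartesian square (cf.\ \cite{chsw}). As $-\oo_{k_0}\Lambda$ preserves these semiorthogonal factors, applying $E'$ yields the Mayer--Vietoris property for blow-ups along regularly embedded subschemes.

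It remains to upgrade to weak $cdh$ descent, for which, by Definition \ref{defn:weakcdh}, only finite abstract blow-ups are still needed. I would first transport hypothesis (3) from $E$ to $E_\Lambda$ on affines: for $S=\Spec R$ the dg category $\perf(R)\oo_{k_0}\Lambda$ is dg Morita equivalent to $R\oo_{k_0}\Lambda$ (apply the preservation of Morita equivalences to the equivalence $R\to\perf(R)$), so $E_\Lambda(\Spec R)=E(R\oo_{k_0}\Lambda)$. Flatness makes $-\oo_{k_0}\Lambda$ exact, so it carries a Milnor (conductor) square of commutative $k_0$-algebras to a Milnor square of $k_0$-algebras, an ideal mapping isomorphically to an ideal to an ideal with the same property, and a nilpotent ideal to a nilpotent ideal. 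Hence $E_\Lambda$ satisfies excision for ideals and nilpotent invariance on affine schemes. The standard conductor-square argument---the same one used for $KH$ in \cite{WeibelKH} and \cite[Example 13.11]{chww-monoid}---then shows that excision together with nilpotent invariance yields the Mayer--Vietoris property for finite abstract blow-ups, the non-affine case being reduced to the affine one via the Zariski descent established above.

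Combining the open-cover, regular-blow-up, and finite-abstract-blow-up cases gives weak $cdh$ descent for $E_\Lambda$ on $\Sch/k_0$. The final assertion is then immediate: under the additional hypotheses on $k_0$ required there, feeding $F=E_\Lambda$ into Theorem \ref{thm:descent} shows that $\cE_\Lambda(X)=E_\Lambda(X_{k_0})$ satisfies $cdh$ descent on $\cMpctf$.
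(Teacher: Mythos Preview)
Your proposal is correct and follows essentially the same route as the paper's proof: both reduce to showing that $E'=E(-\oo_{k_0}\Lambda)$ is again localizing and dg Morita invariant (using flatness of $\Lambda$), then invoke Zariski/Nisnevich descent via \cite{Tabuada}, the blow-up formula from \cite{chsw} for regular embeddings, and excision plus nilpotent invariance (transported along $-\oo_{k_0}\Lambda$) for finite abstract blow-ups. The paper cites \cite[Lemma 1.5 and Theorem 3.12]{chsw} and \cite[Theorem 3.2]{CRC16} where you spell out the underlying semiorthogonal/conductor-square reasoning directly, but the argument is the same.
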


\begin{proof}
Because $\Lambda$ is flat, tensoring with $\Lambda$ preserves 
dg Morita equivalences and short exact sequences of
dg categories.
Hence $E_{\Lambda}$ is both dg Morita invariant and localizing whenever $E$ is. In particular it satisfies Zariski descent, by the discussion above. The Mayer-Vietoris property for blow-ups along
regularly embedded subschemes also follows from hypotheses (1) and (2) using \cite[Lemma 1.5]{chsw}, as pointed out in \cite[Theorem 3.2]{CRC16} and implicitly in \cite[Theorem 2.10]{chsw}. To finish the proof we must show that if $E$ also satisfies (3) then it has the Mayer-Vietoris property for finite abstract blow-ups. As observed in the proof of \cite[Theorem 3.12]{chsw}, the latter property follows from excision for ideals,
invariance under nilpotent ring extensions and Zariski descent.  
Tensoring with $\Lambda$ preserves Milnor squares of $k_0$-algebras; 
because $E$ satisfies excision for ideals, so does $E_\Lambda$.
If $I$ is a nilpotent ideal, so is $I\oo_{k_0}\Lambda$, so hypothesis (3) 
implies that $E_\Lambda$ is invariant under nilpotent ring extensions.
\end{proof}

\begin{subrem}\label{X_L}
If $E$ is a dg Morita invariant,
localizing functor from small dg $k_0$-categories to spectra,
and $k$ is a flat commutative $k_0$-algebra, then $E(S_k)$ is naturally
equivalent to the $E_k(S)$ of Theorem \ref{E_L}.  Indeed, the result holds
when $S=\Spec(R)$ is an affine scheme, since
$R_k\to\perf(R_k)$ and $\perf(R)\oo_{k_0}k\to \perf(R_k)$ are 
dg Morita equivalences; the result for arbitrary schemes follows from the 
fact that, by Theorem \ref{E_L}, both $E_k$ and $E(-_k)$ satisfy 
Zariski descent. In particular, it follows that the functors $\cE(X)=E(X_k)$
and $\cE_k(X)=E_k(X_{k_0})$ on $\cMpctf$ are naturally equivalent. 

Motivated by all this, we shall often write $E(X_\Lambda)$ for
$\cE_{\Lambda}(X)$ when $\Lambda$ is a flat but not necessarily
commutative $k_0$-algebra, even if no scheme $X_\Lambda$ is
defined. By Lemma \ref{lem39}, $E(X_\Lambda)$ is equivalent to the
functor defined by Zariski descent from the spectra $E(\Lambda[A])$ on
the affine opens $\MSpec(A)$ of $X$.
\end{subrem}

\goodbreak
\begin{ex}\label{ex:KHC}
For any dg category $\cA$, abusing notation a bit, we also write $\perf(\cA)$ for the (unenriched) category
whose morphisms are $\cA$-module homomorphisms; i.e., the category whose hom sets are the zero cycles of underlying dg category.
Then 
$\perf(\cA)$ may be regarded as a Waldhausen category;
the weak equivalences are quasi-isomorphisms and the cofibrations
are $\cA$-module homomorphisms which admit retractions as homomorphisms 
of graded $\cA$-modules; see \cite[Sec.\,5.2]{Keller} for example.

We define $K(\cA)$ to be the (Waldhausen) $K$-theory of $\perf(\cA)$.
As pointed out in {\it loc.\ cit.,} this functor is localizing and 
Morita invariant, and its restriction to $k_0$-algebras is naturally
equivalent to the usual $K$-theory of algebras.
Theorem \ref{E_L} applies to $KH$, which satisfies (3), showing that
$KH_\Lambda(S)$ satisfies $cdh$ descent on $Sch/k_0$, and
$\cKH_\Lambda(X)$ satisfies $cdh$ descent on $\cMpctf$.
\end{ex}

\begin{cor}\label{cor:khCdescent}
Let $k_0$ be a commutative regular ring of finite Krull dimension
all of whose residue fields are infinite,
and let $\Lambda$ be a flat  
$K$-regular associative $k_0$-algebra.  Then the presheaf of spectra 
$\cKH_\Lambda$ satisfies $cdh$ descent on $\cMpctf$, and the maps
\[
\cKH_\Lambda(X) \to \bH_{\cdh}(X, \cKH_\Lambda) \leftarrow \bH_{\cdh}(X, \cK_\Lambda)
\]
are weak  equivalences for all $X$ in $\cMpctf$.
\end{cor}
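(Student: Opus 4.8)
The plan is to follow the proof of Corollary \ref{cor:khdescent}, replacing the commutative $K$-regular ring $k$ there by the flat $K$-regular associative $k_0$-algebra $\Lambda$, and replacing the appeal to Example \ref{ex:KH-weakcdh} and Theorem \ref{thm:bigdescent} by the appeal to Example \ref{ex:KHC}. The first assertion is already in hand: since $KH$ is dg Morita invariant, localizing, and satisfies excision for ideals together with invariance under nilpotent extensions, Example \ref{ex:KHC}---which uses only that $\Lambda$ is flat over $k_0$, via Theorem \ref{E_L} and the standing hypotheses on $k_0$ needed for Theorem \ref{thm:descent}---shows that $\cKH_\Lambda$ satisfies $cdh$ descent on $\cMpctf$. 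By Definition \ref{cdh-descent} this is exactly the statement that the left-hand map $\cKH_\Lambda(X)\to\bH_\cdh(X,\cKH_\Lambda)$ is a weak equivalence for all $X$.

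For the right-hand arrow I must show that $\cK_\Lambda\to\cKH_\Lambda$ induces a weak equivalence $\bH_\cdh(X,\cK_\Lambda)\to\bH_\cdh(X,\cKH_\Lambda)$ for every $X$. As at the end of Corollary \ref{cor:khdescent}, the reduction is to the smooth case: once one knows that $\cK_\Lambda(X)\to\cKH_\Lambda(X)$ is an equivalence for smooth $X$, the Mayer-Vietoris property of $\bH_\cdh(-,\cK_\Lambda)$ for $cdh$ squares, combined with resolution of singularities for monoid schemes (Theorem \ref{thm:ros} and \cite[Theorem 11.1]{chww-monoid}), promotes this to the desired equivalence for all $X$ by induction along the abstract blow-up squares of a resolution.

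It therefore remains to treat smooth $X$. Here I would first use that both $\cK_\Lambda$ and $\cKH_\Lambda$ are defined by Zariski descent on $\cMpctf$ (Remark \ref{X_L}), reducing to the affine case $X=\MSpec(A)$ with $A=\langle t_1,\dots,t_d,s_1,1/s_1,\dots,s_r,1/s_r\rangle$ smooth, so that $\Lambda[A]$ is the Laurent polynomial ring $\Lambda[t_1,\dots,t_d,s_1^{\pm1},\dots,s_r^{\pm1}]$. The task is then to show $K(\Lambda[A])\to KH(\Lambda[A])$ is an equivalence. Since $\Lambda$ is $K$-regular, $K(\Lambda)\simeq KH(\Lambda)$ and $K$-regularity is inherited by the polynomial subring $\Lambda[t_1,\dots,t_d]$; the invertible variables $s_i$ are then handled by the Bass Fundamental Theorem of $K$-theory, which holds for arbitrary associative rings and whose $NK$-summands vanish at each stage by $K$-regularity, so that the decomposition of $K(\Lambda[A])$ matches the (clean) Fundamental Theorem decomposition of $KH(\Lambda[A])$.

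The main obstacle is precisely this last point: one must verify that $K$-regularity propagates along the polynomial and Laurent polynomial extensions packaged in a smooth monoid $A$, so that all the relevant $NK$-groups vanish and the comparison of Fundamental Theorem decompositions for $K$ and $KH$ becomes an isomorphism rather than merely a compatible map up to Nil terms. Granting this (and recording that the Fundamental Theorem is available for the possibly noncommutative ring $\Lambda$), the argument is formally identical to the commutative case of Corollary \ref{cor:khdescent}.
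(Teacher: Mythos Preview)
Your proposal is correct and follows essentially the same approach as the paper's proof: the paper also invokes Example \ref{ex:KHC} for the left arrow and then states that the argument for the right arrow is exactly that of Corollary \ref{cor:khdescent}, namely the reduction to smooth $X$ via the fact that every $cdh$ cover in $\cMpctf$ refines to one by smooth monoid schemes, together with $K$-regularity and the Fundamental Theorem on the affine smooth pieces. The ``main obstacle'' you flag---that $K$-regularity propagates to polynomial and Laurent polynomial extensions---is standard (polynomial extensions by the very definition of $K$-regularity, Laurent extensions by the Fundamental Theorem with vanishing $NK$-terms) and the paper does not spell it out either.
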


\begin{proof}
The left arrow is a weak equivalence by Example \ref{ex:KHC}.
The proof that the other arrow is an equivalence is exactly like
the corresponding proof of Corollary \ref{cor:khdescent}.
\end{proof}

\begin{ex}\label{ex:HC}
The Hochschild homology, cyclic homology and negative cyclic homology
of a dg category $\cA$ 
are defined using the mixed complex of 
$\perf(\cA)$; see \cite[Sec.\,5.3]{Keller}.  Moreover, the restrictions
of these functors to flat algebras (resp., schemes) are isomorphic 
to the usual homology theories of algebras (resp., of schemes).
\end{ex}

\begin{notation}
  Given a presheaf of spectra $\cF$ on some category we will write
  $\cF\oo\Q$ for the rationalization of $\cF$ and $\cF/n$ for the
  cofiber of multiplication by $n$ on $\cF$. We make an exception for
  the presheaf of spectra $HN(-_\Lambda)$ on $\cMpctf$ sending $X$ to
  negative cyclic homology $HN(X_\Lambda)$. By $HN_{\Lambda\Q}(X)$ we
  will mean $HN(X_{\Lambda\otimes\Q})$, which is defined as in Remark
  \ref{X_L}. The Jones-Goodwillie Chern character is a natural
  morphism on algebras, from $K(\Lambda)\otimes\Q$ to
  $HN({\Lambda\otimes\Q})$; see \cite[p.\,351]{Goodwillie86}. It
  extends to a natural transformation of functors on dg categories,
  and hence a natural morphism $\cK_\Lambda(X)\oo\Q \to
  HN_{\Lambda\Q}(X)$ on monoid schemes (see \cite[Example 9.10]{cita},
  \cite[Section ~4.4]{McChern} for the Chern character for dg
  categories).
\end{notation}

\begin{prop}\label{Qfiber}
Let $k_0$ be commutative regular ring of finite Krull dimension all of whose residue
fields are infinite, and let $\Lambda$ be a flat, associative $k_0$-algebra that  is
$K$-regular.  For any monoid scheme $X$ in $\cMpctf$, the following square
of spectra is homotopy cartesian:
\[\xymatrix{
\cK_\Lambda(X)\oo\Q \ar[r] \ar[d] & \cKH_\Lambda(X)\oo\Q \ar[d] \\
HN_{\Lambda\Q}(X) \ar[r]         & \bH_{\cdh}(X,HN_{\Lambda\Q}).
} \]
\end{prop}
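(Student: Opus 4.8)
The plan is to recognise the displayed square as the comparison, under the two horizontal $cdh$-localization maps, of the Jones--Goodwillie Chern character, and then to prove that the homotopy fibre of that Chern character satisfies $cdh$ descent on $\cMpctf$; the proposition will follow formally. Concretely, write $\mathcal P$ for the presheaf on $\cMpctf$ given by the homotopy fibre of the Chern character, so that there is a fibre sequence
\[
\mathcal P \to \cK_\Lambda(-)\oo\Q \to HN_{\Lambda\Q}(-).
\]
Using Corollary \ref{cor:khCdescent} (which is where $K$-regularity of $\Lambda$ enters), together with the fact that the $cdh$ cd-structure of Definition \ref{defn:cdh} is bounded, so that $\bH_{\cdh}$ commutes with the filtered colimit defining rationalization, one identifies the top-right corner $\cKH_\Lambda(X)\oo\Q$ with $\bH_{\cdh}(X,\cK_\Lambda\oo\Q)$, compatibly with the maps of the square. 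Thus the given square is, up to equivalence, the square whose rows are the $cdh$-localization units of $\cK_\Lambda\oo\Q$ and of $HN_{\Lambda\Q}$ and whose columns are (localizations of) the Chern character.

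Such a square is homotopy cartesian exactly when the map on horizontal homotopy fibres, that is, on the ``$cdh$-local fibres'' $\cF_{\cK_\Lambda\oo\Q}(X)\to\cF_{HN_{\Lambda\Q}}(X)$ in the notation of the introduction, is an equivalence. Since $cdh$-fibrant replacement is exact on presheaves of spectra, the operation of forming the fibre of a presheaf's $cdh$-localization unit is exact; applying it to the fibre sequence above yields a fibre sequence $\cF_{\mathcal P}\to\cF_{\cK_\Lambda\oo\Q}\to\cF_{HN_{\Lambda\Q}}$. Hence it suffices to show that $\cF_{\mathcal P}\simeq\ast$, i.e. that $\mathcal P$ satisfies $cdh$ descent.

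To prove this I would apply Theorem \ref{E_L} to the functor on small dg $k_0$-categories
\[
E(\cA)=\operatorname{fib}\big(K(\cA)\oo\Q \to HN(\cA\oo_{k_0}\Q)\big),
\]
the homotopy fibre of the Chern character at the level of dg categories. Both $K(-)\oo\Q$ and $HN(-\oo_{k_0}\Q)$ are dg Morita invariant and localizing (the latter because tensoring with $\Q$ is flat and $HN$ is Morita invariant and localizing), and the homotopy fibre of a natural transformation of such functors is again Morita invariant and localizing; this supplies hypotheses (1) and (2). Because $\Lambda$ is flat, $\perf(X_{k_0})\oo_{k_0}\Lambda$ computes $X_\Lambda$ and $\oo\Q$ passes through the tensor factor, so $E_\Lambda(X_{k_0})\simeq\mathcal P(X)$; Theorem \ref{E_L} then delivers $cdh$ descent for $\mathcal P$ on $\cMpctf$, as required.

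The crux is verifying hypothesis (3) of Theorem \ref{E_L} for $E$: that its restriction to $k_0$-algebras satisfies excision for ideals and is invariant under nilpotent extensions. This is exactly where the two deep inputs enter, just as in the characteristic-zero template \cite{chsw}. Nilinvariance is Goodwillie's theorem \cite{Goodwillie86}: for a nilpotent ideal $I$ in a ring $\Gamma$ the relative Chern character $K(\Gamma,I)\oo\Q\to HN(\Gamma\oo\Q,I\oo\Q)$ is an equivalence, so the relative term of $E$ vanishes. Excision is Corti\~nas' theorem that infinitesimal $K$-theory---precisely the fibre $E$ above---satisfies excision for ideals. The one point requiring care, and the main obstacle relative to \cite{chsw}, is that here $\Lambda$, hence the algebras to which $E$ is restricted, need not contain $\Q$, so these theorems must be invoked in their general, not-necessarily-$\Q$-algebra form, with $HN$ taken of the rationalization $(-)\oo\Q$; since this is how both Goodwillie's and Corti\~nas' theorems are originally stated, the mixed-characteristic case introduces no additional difficulty once $E$ is set up with $HN(-\oo_{k_0}\Q)$ as its target.
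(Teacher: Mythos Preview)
Your proof is correct and follows essentially the same route as the paper: both use Corollary~\ref{cor:khCdescent} to identify the right column with $cdh$-fibrant replacements, reduce the statement to showing that the fibre of the Chern character $\cK_\Lambda\oo\Q\to HN_{\Lambda\Q}$ satisfies $cdh$ descent on $\cMpctf$, and then apply Theorem~\ref{E_L} with hypothesis~(3) supplied by Corti\~nas' excision theorem \cite{CortKABI} and Goodwillie's nilinvariance theorem \cite{Goodwillie86}. Your exposition is somewhat more explicit about the formal reduction (horizontal versus vertical fibres, commutation of $\bH_{\cdh}$ with rationalization), but the substance is identical.
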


\begin{proof}
By Corollary \ref{cor:khCdescent} the presheaves in the right column satisfy
$cdh$-descent on $\cMpctf$. Thus to prove the assertion, we need to show that
the homotopy fiber $F$ of the left vertical map satisfies $cdh$ descent. 
Since $\cK_\Lambda\oo\Q$ and $HN_{\Lambda\Q}$ satisfy Zariski descent, 
so does $F$.  Given Examples \ref{ex:KHC} and \ref{ex:HC},
Theorem \ref{E_L} shows that
it suffices to show that $F$ (the corresponding
homotopy fiber on $Sch/k_0$) satisfies 
excision for ideals and invariance under nilpotent 
extensions.  The
first of these properties is the main theorem of \cite{CortKABI}; the
second is the scheme version of Goodwillie's \cite{Goodwillie86}. 
\end{proof}

The analogue of the Jones-Goodwillie Chern character in characteristic $p$
is the cyclotomic trace; it is a compatible family of morphisms
$tr^\nu:\cK(\Lambda)\to TC^\nu(\Lambda;p)$, where $\Lambda$ is an
associative ring and the pro-spectrum
$\{TC^\nu(\Lambda;p)\}_{\nu}$ is $p$-local topological cyclic homology.
For each $n\ge1$, the cyclotomic trace induces a map
$\cK(\Lambda)/p^n\to TC^\nu(\Lambda;p)/p^n$.
Letting $\nu$ vary, we get a strict map of pro-spectra.
Blumberg and Mandell showed in \cite[1.1, 1.2, 7.1, 7.3]{BlMa2008}
that $TC^\nu$ extends to a functor on dg categories that is
localizing, Morita invariant and satisfies Zariski descent on schemes.

As shown in \cite{BlMa2008} the cyclotomic trace extends to a natural transformation of functors on dg categories; in particular, if $X$ is a monoid scheme and $\Lambda$ is an associative, flat algebra over some commutative ring $k_0$, we obtain a natural map
$tr^\nu:\cK(X_\Lambda)/p^n\to TC^\nu(X_\Lambda;p)$.

Recall from \cite{GH2010} (or \cite[Section 14]{chww-monoid})
that a strict map $\{ E^\nu\}\to \{ F^\nu\}$  of pro-spectra
is said to be a {\it weak equivalence} if for every $q$ the induced map
$\{ \pi_q(E^\nu)\} \to \{\pi_q(F^\nu)\}$
is an isomorphism of pro-abelian groups.
Let 
\begin{equation}\label{square}
\xymatrix{
\{E^\nu\}\ar[r]\ar[d]&\{F^\nu\}\ar[d]\\
\{G^\nu\}\ar[r]&\{H^\nu\}
}
\end{equation}
be a square diagram of strict maps of pro-spectra. We say that \eqref{square}
is {\it homotopy cartesian} if the canonical map from the upper left
pro-spectrum to the level-wise homotopy limit of the other terms is a weak
equivalence.

\goodbreak
The presheaves of pro-spectra we shall consider come from pro-presheaves 
of spectra, that is, from inverse systems of presheaves of spectra. 
Let $\{F^\nu\}$ be a pro-presheaf of spectra on $Sch/k$; for $X$ in 
$\cMpctf$, write $\cF^\nu(X)$ for $F^\nu(X_k)$.

The pro-analogue of $\{F^\nu\}$ (or $\{\cF^\nu\}$)
having the Mayer-Vietoris property for a family of squares 
is the obvious one, as are the notions \ref{cdh-descent},
\ref{defn:weakcdh} of $\{F^\nu\}$ having $cdh$ descent or weak $cdh$ descent.
Here is the pro-analogue of Theorem \ref{thm:descent}.

\begin{thm}\label{pro-descent}
  Let $\{F^\nu\}$ be a pro-presheaf of spectra on $Sch/k$, where $k$
  is a commtative regular ring of finite Krull dimension all of whose
  residue fields are infinite.  If $\{F^\nu\}$, regarded as a presheaf
  of pro-spectra, satisfies weak $cdh$ descent on $Sch/k$, then the
  presheaf $\{\cF^\nu\}$ defined by $\cF^\nu(X)=F^\nu(X_k)$ satisfies
  $cdh$ descent on $\cMpctf$.
\end{thm}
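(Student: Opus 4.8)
The plan is to repeat the proof of Theorem \ref{thm:descent} verbatim, but carried out levelwise in the pro-category of spectra. Recall that by \cite[Proposition 12.10]{chww-monoid} (whose proof is formal and applies equally to presheaves of pro-spectra, the $cd$ structure being bounded, complete and regular), $cdh$ descent for a presheaf of pro-spectra on $\cMpctf$ is equivalent to the Mayer-Vietoris property for the family of Zariski squares together with the family of abstract blow-up squares. So I would reduce the theorem to checking these two pro-Mayer-Vietoris properties for $\{\cF^\nu\}$.

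First I would dispose of the Zariski squares. Since $\{F^\nu\}$ satisfies weak $cdh$ descent on $Sch/k$ it has the Mayer-Vietoris property for open covers, and because the realization functor $X\mapsto X_k$ sends a Zariski square in $\cMpctf$ to an open cover in $Sch/k$, the pro-spectrum $\{\cF^\nu\}$ inherits the Mayer-Vietoris property for Zariski squares at each level $\nu$. A levelwise homotopy cartesian square is homotopy cartesian in the pro-sense, since the canonical map to the levelwise homotopy limit is then a levelwise equivalence, hence a pro-weak equivalence in the sense defined before the statement.

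The substance is the abstract blow-up squares, and here I would run the d\'evissage of \cite[Theorem 14.3]{chww-monoid} exactly as modified in the proof of Theorem \ref{thm:descent}, replacing Theorem 14.2 of \cite{chww-monoid} by Theorem \ref{thm:ros}. The key geometric inputs---resolution of singularities for separated pctf monoid schemes and the normal flatness of the realizations $(X_i)_k$ along $(Z_i)_k$ (Corollary \ref{nflat-Z} and Theorem \ref{thm:ros})---are statements about $X$ alone and are independent of the coefficient system, so they carry over unchanged. The point of normal flatness is that the realization of a blow-up of $X$ along a smooth equivariant center $Z$ is the blow-up of $X_k$ along the \emph{regularly embedded} center $Z_k$; thus, after resolving, the realization of an abstract blow-up square is assembled out of blow-up squares along regularly embedded subschemes and finite abstract blow-up squares. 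By hypothesis $\{F^\nu\}$ has the pro-Mayer-Vietoris property for precisely these squares, so applying $\{F^\nu\}$ to the realizations produces the required homotopy cartesian squares of pro-spectra.

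The one place that requires genuine care, and the main obstacle, is the local-to-global argument buried in the analogue of \cite[Lemma 13.9]{chww-monoid}: there one passes from the local statement to arbitrary $X$ via a local-global spectral sequence, whose convergence was the reason for the finite Krull dimension hypothesis. In the pro-setting I would replace each spectral sequence by the corresponding pro-system of spectral sequences and argue in the abelian category $\ProAb$ of pro-abelian groups. What makes this work is that weak equivalences of pro-spectra are defined through pro-isomorphisms of homotopy groups, and pro-isomorphisms form a class satisfying the two-out-of-three property and detected on the associated graded of a bounded convergent filtration; the finite Krull dimension bound makes the local-global filtration uniformly bounded across the whole pro-system in $\nu$, so that the levelwise $\lim^1$-phenomena are absorbed upon passage to $\ProAb$. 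Granting this, the two-out-of-three property propagates homotopy cartesianness through the finitely many blow-up steps $Y=X_n\to\dots\to X_0=X$ of Theorem \ref{thm:ros}, and one concludes that $\{\cF^\nu\}$ has the pro-Mayer-Vietoris property for abstract blow-up squares, hence satisfies $cdh$ descent on $\cMpctf$.
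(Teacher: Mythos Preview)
Your proposal is correct and follows essentially the same route as the paper: adapt the proof of Theorem \ref{thm:descent} (the modification of \cite[14.3]{chww-monoid}) to the pro-setting, with Zariski and smooth blow-up Mayer--Vietoris holding levelwise and the pro-version of \cite[Lemma 13.9]{chww-monoid} handling the nice blow-up squares. The paper organizes the endgame slightly differently: rather than invoking a pro-analogue of \cite[12.10]{chww-monoid} to reduce $cdh$ descent to the Mayer--Vietoris property, it observes that each individual $\cF^\nu$ satisfies $scdh$ descent (levelwise Zariski plus smooth blow-up, via \cite[12.13]{chww-monoid}), combines this with the pro-Mayer--Vietoris for nice blow-ups, and then applies \cite[Theorem 14.2]{chww-monoid} (with Theorem \ref{thm:ros} supplying the resolution input, as in the proof of Theorem \ref{thm:descent}) to conclude directly that $\{\cF^\nu(X)\}\to\{\bH_\cdh(X,\cF^\nu)\}$ is a pro-weak equivalence. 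This sidesteps the need to justify that the equivalence in \cite[12.10]{chww-monoid} extends formally to presheaves of pro-spectra, which you assert but which is not entirely content-free.
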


\begin{proof}
We modify the proof of Theorem \ref{thm:descent} (which is in turn
a modification of the proof of \cite[14.3]{chww-monoid}).
By construction, each $\bH_\cdh(-,\cF^\nu)$ satisfies $cdh$ descent.
As noted in the proof of Theorem \ref{thm:descent},
the proof of Theorem 13.9 in \cite{chww-monoid} goes through to show that
the presheaf of pro-spectra $\{\cF^\nu\}$ has the Mayer-Vietoris property for
``nice'' blow-up squares (Definition 13.5 in \cite{chww-monoid}).
Now each $\cF^\nu$ has the Mayer-Vietoris property for Zariski
squares and smooth blow-up squares, so by \cite[12.13]{chww-monoid}
each $\cF^\nu$ satisfies $scdh$ descent (terminology of
\cite[12.12]{chww-monoid}).
Hence each $\{\cF^\nu(X)\}\to \{\bH_\cdh(X,\cF^\nu)\}$ 
is a weak equivalence of pro-spectra 
by Theorem 14.2 of \cite{chww-monoid}.
\end{proof}

\begin{cor}\label{pro-E_L}
Assume that $k_0$ is a commutative regular ring of finite Krull dimension
all of whose residue fields
are infinite, and that $\Lambda$ is a flat unital associative
$k_0$-algebra. Let $\{E^\nu\}$ be an inverse system of functors from the category of small
dg $k_0$-categories to spectra such that each $E^\nu$ is
localizing and dg Morita invariant. 

If the restriction of $\{E^\nu\}$ to $k_0$-algebras, regarded as a functor to pro-spectra, satisfies excision for
ideals and is invariant under nilpotent ring extensions, then
the presheaf of pro-spectra $\{E^\nu_\Lambda\}$, defined by
\[
E^\nu_\Lambda(S) = E^\nu(\perf(S)\otimes_{k_0} \Lambda),
\]
satisfies weak $cdh$ descent on $Sch/k_0$.

By Theorem \ref{pro-descent}, the functor $\{\cE^\nu_\Lambda\}$
satisfies $cdh$ descent on $\cMpctf$.
\end{cor}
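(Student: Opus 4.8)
The plan is to establish Corollary \ref{pro-E_L} as the pro-spectrum analogue of Theorem \ref{E_L}, transporting the three Mayer--Vietoris properties that make up weak $cdh$ descent (open covers, blow-ups along regularly embedded subschemes, and finite abstract blow-ups) from the given system $\{E^\nu\}$ to the system $\{E^\nu_\Lambda\}$ by flat base change along $\Lambda$, and then to invoke Theorem \ref{pro-descent} for the final sentence. Throughout, ``homotopy cartesian'' and the Mayer--Vietoris property for pro-presheaves are understood in the pro-sense described in the paragraph preceding Theorem \ref{pro-descent}.

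First I would record that, because $\Lambda$ is flat over $k_0$, the functor $-\oo_{k_0}\Lambda$ on small dg $k_0$-categories preserves dg Morita equivalences and short exact sequences of dg categories. Hence each $E^\nu_\Lambda$ is again localizing and dg Morita invariant, and Theorem \ref{E_L} applies level-wise: each $E^\nu_\Lambda$ satisfies Zariski descent and the Mayer--Vietoris property for blow-ups along regularly embedded subschemes. Since these are level-wise homotopy cartesian squares of spectra, and since a level-wise weak equivalence of inverse systems is in particular a weak equivalence of pro-spectra, the pro-presheaf $\{E^\nu_\Lambda\}$ automatically inherits the corresponding Mayer--Vietoris properties in the pro-sense.

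The remaining and essential point is the Mayer--Vietoris property for finite abstract blow-ups, where the pro-structure genuinely enters. Following the argument in the proof of \cite[Theorem 3.12]{chsw}, this property is a formal consequence of excision for ideals, invariance under nilpotent ring extensions, and Zariski descent. I would verify the first two hypotheses for $\{E^\nu_\Lambda\}$ by base change: flatness of $\Lambda$ ensures that $-\oo_{k_0}\Lambda$ carries Milnor squares of $k_0$-algebras to Milnor squares, so the assumed pro-excision of $\{E^\nu\}$ passes to $\{E^\nu_\Lambda\}$; and since $I\oo_{k_0}\Lambda$ is nilpotent whenever the ideal $I$ is, the assumed pro-invariance under nilpotent extensions transfers as well. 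With Zariski descent already in hand, the argument of \cite{chsw} then delivers the finite abstract blow-up property for $\{E^\nu_\Lambda\}$, completing the verification of weak $cdh$ descent on $Sch/k_0$; Theorem \ref{pro-descent} then yields $cdh$ descent for $\{\cE^\nu_\Lambda\}$ on $\cMpctf$.

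The hard part will be checking that the deduction of the finite abstract blow-up property from excision, nilpotent invariance, and Zariski descent --- carried out for honest spectra in \cite[Theorem 3.12]{chsw} --- goes through verbatim in the pro-homotopy category. This amounts to confirming that the homotopy fibers and Milnor-square comparisons used there are all formed level-wise and that the relevant comparison maps become weak equivalences of pro-spectra precisely when the input hypotheses are phrased pro-level-wise, as they are in the statement. I expect no genuine difficulty here beyond the bookkeeping needed to keep ``homotopy cartesian'' consistently interpreted in the pro-sense throughout.
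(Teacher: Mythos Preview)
Your proposal is correct and follows precisely the approach the paper takes: its proof is the single sentence ``The proof of Theorem \ref{E_L} applies, using Theorem \ref{pro-descent} in place of Theorem \ref{thm:descent},'' and what you have written is an accurate unpacking of that sentence, including the level-wise transfer of Zariski descent and regular blow-up Mayer--Vietoris, and the pro-level transfer of excision and nilpotent invariance via flat base change along $\Lambda$.
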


\begin{proof}
The proof of Theorem \ref{E_L} applies, using Theorem \ref{pro-descent}
in place of Theorem \ref{thm:descent}.
\end{proof}

\goodbreak
\begin{prop}\label{pfiber}
  Let $k_0$ be a commutative regular ring of finite Krull dimension
  all of those residue fields are infinite, and let $\Lambda$ be a
  flat associative $k_0$-algebra that is $K$-regular.  For any monoid
  scheme $X$ in $\cMpctf$, any prime $p$ and all $n>0$ the following
  square of strict maps of pro-spectra is homotopy cartesian.
\[\xymatrix{
\cK_\Lambda/p^n(X) \ar[r] \ar[d] & \cKH_\Lambda/p^n(X) \ar[d] \\
\{TC^\nu(X_\Lambda;p)/p^n\}_{\nu} \ar[r] & 
\{\bH_\cdh(X_\Lambda,TC^\nu(-;p)/p^n)\}_{\nu}.}
\]
The vertical maps are induced by the cyclotomic trace.
\end{prop}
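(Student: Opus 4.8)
The plan is to follow the proof of Proposition~\ref{Qfiber} verbatim in structure, substituting the cyclotomic trace for the Jones--Goodwillie character, the pro-version Corollary~\ref{pro-E_L} for Theorem~\ref{E_L}, and the $p$-adic pro-analogues of excision and nilpotent invariance for the rational inputs.

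\emph{Recognizing the right column as a fibrant replacement.} First I would argue that the right-hand column of the square is the levelwise $cdh$-fibrant replacement of the left-hand column. The term $\{\bH_\cdh(X_\Lambda,TC^\nu(-;p)/p^n)\}$ is by construction the fibrant replacement of $\{TC^\nu(X_\Lambda;p)/p^n\}$. For the top edge, Corollary~\ref{cor:khCdescent} identifies $\cKH_\Lambda(X)$ with $\bH_\cdh(X,\cK_\Lambda)$; since $cdh$-fibrant replacement commutes with the cofiber of multiplication by $p^n$, this gives $\cKH_\Lambda/p^n\simeq\bH_\cdh(-,\cK_\Lambda/p^n)$, exhibiting the top-right term as the fibrant replacement of the top-left term. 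Thus the square is the naturality square for the unit $\mathrm{id}\to\bH_\cdh$ applied to the left vertical map, and---because $\bH_\cdh$ preserves homotopy fibers---it is homotopy cartesian precisely when the homotopy fiber $\{F^\nu\}$ of the left vertical map satisfies $cdh$ descent on $\cMpctf$. Here $F^\nu$ is the homotopy fiber of the mod-$p^n$ cyclotomic trace $\cK_\Lambda/p^n\to TC^\nu(X_\Lambda;p)/p^n$.

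\emph{Realizing $\{F^\nu\}$ via a functor on dg categories and reducing to the affine case.} Next I would set $E^\nu=\mathrm{fib}\bigl(K/p^n\to TC^\nu(-;p)/p^n\bigr)$ as a functor on small dg $k_0$-categories, the map being the cyclotomic trace extended by Blumberg--Mandell \cite{BlMa2008}. Since $K$ (Example~\ref{ex:KHC}) and $TC^\nu$ (\cite{BlMa2008}) are localizing and dg Morita invariant, and these properties pass to cofibers of self-maps and to homotopy fibers of natural transformations, each $E^\nu$ is localizing and dg Morita invariant, and the restriction maps of the pro-system $\{TC^\nu\}$ make $\{E^\nu\}$ an inverse system. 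By Remark~\ref{X_L} the associated presheaf $\{\cE^\nu_\Lambda\}$ on $\cMpctf$ is identified with $\{F^\nu\}$. Corollary~\ref{pro-E_L} then reduces the desired $cdh$ descent to showing that the restriction of $\{E^\nu\}$ to $k_0$-algebras, regarded as a pro-spectrum-valued functor, satisfies excision for ideals and invariance under nilpotent ring extensions.

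\emph{The two inputs (the main obstacle).} These two properties are the $p$-adic pro-analogues of the rational statements used in Proposition~\ref{Qfiber}, and verifying them is where the real work lies. Invariance of the pro-fiber of the cyclotomic trace under nilpotent ring extensions is the theorem of Dundas--Goodwillie--McCarthy. Excision is the subtler point, since $K$-theory with finite coefficients fails excision on the nose; one must invoke the theorem of Geisser--Hesselholt that bi-relative $K$-theory and bi-relative $TC$ with finite coefficients agree as pro-spectra, i.e.\ that the pro-fiber of the cyclotomic trace satisfies excision for ideals. Granting these two inputs, Corollary~\ref{pro-E_L} delivers $cdh$ descent for $\{F^\nu\}$, and the square is homotopy cartesian.
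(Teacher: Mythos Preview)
Your proposal is correct and follows essentially the same route as the paper's proof: reduce to showing that the pro-fiber of the mod-$p^n$ cyclotomic trace satisfies $cdh$ descent, lift to a functor on dg categories that is levelwise localizing and Morita invariant, apply Corollary~\ref{pro-E_L}, and then invoke Geisser--Hesselholt \cite{GH-birel} for pro-excision and McCarthy's theorem (in the pro-form of \cite[Theorem~2.1.1]{GH-dvr}) for nilpotent invariance. The only cosmetic difference is that the paper cites McCarthy's original theorem together with the Geisser--Hesselholt strengthening rather than the Dundas--Goodwillie--McCarthy formulation, but the content is the same.
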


\begin{proof}
As in Proposition \ref{Qfiber}, 
it suffices to prove that the homotopy fiber $\{\cF^\nu_\Lambda(X)\}$
of the left vertical map satisfies $cdh$ descent as a 
functor on $\cMpctf$, in the sense that 
$\{F^\nu\}\to\{\bH_\cdh(-;\cF^\nu)\}$
is a weak equivalence of pro-spectra.
(This uses the fact that pro-abelian groups form an abelian category.)

For each dg category $\cA$, let $F^\nu(\cA)$ denote the homotopy fiber of 
$K/p^n(\cA)\to TC^\nu(\cA;p)/p^n$.
Each $F^\nu$ is localizing and dg Morita invariant; these properties are
inherited from $K/p^n$ and $TC^\nu(-;p)/p^n$.
To see that $\{F^\nu_\Lambda\}$ 
satisfies weak descent on $Sch/k$,
and hence that $\{\cF^\nu_\Lambda\}$ satisfies $cdh$ descent on
$\cMpctf$, it suffices by Corollary \ref{pro-E_L} to show that
the restriction of the pro-presheaf $\{F^\nu\}$ to algebras, regarded as a functor to pro-spectra, satisfies 
excision for ideals and
invariance under nilpotent ring extensions.
Excision for ideals is \cite[Theorem 1]{GH-birel}; if $I\triqui R$ is an ideal and $f:R\to S$ is a ring homomorphism mapping $I$ bijectively to an ideal of $S$, then the map  $\{F^\nu(R,I)\}\to \{F^\nu(S,f(I))\}$ is a weak equivalence. 
Invariance under nilpotent ring extensions follows from McCarthy's theorem 
\cite{McCarthy97} as strengthened in \cite[Theorem 2.1.1]{GH-dvr}:
if $I$ is nilpotent then $\{F^\nu(R,I)\}$ is weakly equivalent to a point.
\end{proof}

\goodbreak
\section{$\tOmega$ and dilated cyclic homologies.}\label{sec:dilatetc}

Here we briefly recall the results of Section 6 of our paper \cite{chww-p}.  These results do not require 
the existence of a base field.

The {\it cyclic bar construction} $\Ncy(A)$ of a pointed monoid $A$
is the cyclic set whose underlying set of $n$-simplices is
$A\smsh\cdots\smsh A$ ($n+1$ factors), with $t_n$ being rotation
of the entries to the left. It is $A$-graded: for $a\in A$,
a simplex $(\alpha_0, \dots, \alpha_n)$ is in $\Ncy(A;a)$ if
$\prod \alpha_i=a$.
In \cite[3.1]{chww-p}, we introduced the {\it dilated} cyclic bar
construction
\[
\tNcy(A) = \bigvee_{a \in A} \Ncy(A\ad{a}; a),
\]
where the element $a$ in $\Ncy(A\ad{a};a)$ refers to the element $\frac{a}{1}$
of $A\ad{a}$. Thus an $n$-simplex of $\tNcy(A)$ is given by
$(a; \alpha_0, \dots, \alpha_n)$ where $a \in A$,
$\alpha_0, \dots, \alpha_n \in A \ad{a}$,
and the equation $\alpha_0 \cdots \alpha_n= \frac{a}{1}$ holds in $A\ad{a}$.

The geometric realizations of both $\Ncy(A)$ and $\tNcy(A)$ are
{\it $\S$-spaces}, meaning they  have a continuous action of
the circle group $\S$.

We now assume that $A$ is a quotient $\tilde{A}/I$ of a 
cancellative monoid $\tilde{A}$ (i.e., $A$ is {\it partially cancellative}),
and that $A$ is {\it reduced} in the sense
that $a^n=0$ implies $a=0$ (this is equivalent to the assertion that
if $a,b\in A$ satisfy $a^n=b^n$ for all $n>1$ then $a=b$;
see \cite[1.6]{chww-p}).
Such a monoid is naturally contained in a monoid $A_\sn$ that is
{\it seminormal}, meaning that whenever $x,y\in A_\sn$ satisfy $x^3=y^2$
then there is a (unique) $z\in A_\sn$ such that $x=z^2$ and $y=z^3$.
In fact, $A\to A_\sn$ is universal with respect to maps from $A$ to
seminormal monoids; see \cite[Proposition 1.15]{chww-p}.

\goodbreak
\begin{defn}\label{def:tOmega-A} (\cite[4.1]{chww-p})
For a  pc monoid $A$, we define $\tOmega_A$ to be the $\S$-space
\[
\tOmega_A = |\tNcy(A_\sn)|.
\]
Since $A \mapsto A_\sn$ is a functor, the assignment 
$(X,\cA)\mapsto\tOmega_{\cA}=\tOmega_{\cA(X)}$
yields a presheaf on $\cMpctf$. Moreover, there is a natural map
$|\Ncy(A)| \to \tOmega_A$ of $\S$-spaces.
\end{defn}

Given a sequence $\fc = (c_1, c_2, \dots)$ of integers with $c_i \ge 2$ for all $i$, 
it is shown in \cite[3.6]{chww-p} that $|\Ncy (A)|^\fc\map{\simeq}|\tOmega_A|^\fc$
is an $\S$-homotopy equivalence.

The $\S$-equivariant smash product $\tOmega_{\cA}\smsh T$ of
$\tOmega_{\cA}$ with any $\S$-spectrum $T$ is
a presheaf of $\S$-spectra on $\cMpctf$.
For every integer $r\ge1$, we write $\tOmega^{T,r}$ for the
presheaf of fixed-point spectra $X \mapsto (\tOmega_{\cA(X)}\smsh T)^{C_r}$
on $\cMpctf$ (where $C_r$ is the cyclic subgroup of $\S$ having $r$ elements), and 
$\bH_\zar(-, \tOmega^{T,r})$ for its
fibrant replacement for the Zariski topology.
 The following result was proven in \cite[Theorem 6.2]{chww-p}.

\begin{thm} \label{MT2p}
For  any $\S$-spectrum $T$ and integer $r\ge 1$, the presheaf of spectra
$\bH_\zar(-, \tOmega^{T,r})$
satisfies $cdh$ descent on $\cMpctf$.
\end{thm}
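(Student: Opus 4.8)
The plan is to verify $cdh$ descent by checking the Mayer-Vietoris property for both families of squares that generate the $cdh$ topology, namely Zariski squares and abstract blow-up squares (this suffices by Definition \ref{cdh-descent} and \cite[Proposition 12.10]{chww-monoid}). Since the functor in question, $\bH_\zar(-,\tOmega^{T,r})$, is by construction the Zariski-fibrant replacement of the presheaf $X\mapsto(\tOmega_{\cA(X)}\smsh T)^{C_r}$, the Mayer-Vietoris property for Zariski squares holds automatically. The entire content is therefore to establish the blow-up case.

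First I would reduce, using resolution of singularities for monoid schemes (\cite[Theorem 11.1]{chww-monoid}) together with the general machinery of bounded regular $cd$ structures, to the Mayer-Vietoris property for the blow-ups that arise in resolutions. As in Theorem \ref{thm:descent} and its pro-analogue \ref{pro-descent}, the key geometric input is that by Theorem \ref{thm:ros} we may resolve singularities of a separated cancellative torsionfree monoid scheme by a sequence of blow-ups along \emph{smooth equivariant centers} $Z_i\subset X_i$ with each $X_i$ \emph{normally flat} along $Z_i$. The plan is thus to reduce to showing that, for such a normally-flat blow-up square, the application of $\bH_\zar(-,\tOmega^{T,r})$ yields a homotopy cartesian square of spectra.

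The heart of the argument is a computation of $\tOmega_{\cA}$—equivalently the dilated cyclic bar construction of the seminormalization—on the strata of a normally flat blow-up, showing the induced square of $\S$-spaces (hence, after smashing with $T$, taking $C_r$-fixed points, and Zariski-fibrant replacement) is homotopy cartesian. Here I would lean directly on the constructions of \cite[Section 6]{chww-p}, since the excerpt explicitly states that Section \ref{sec:dilatetc} recalls that material and that \emph{these results do not require the existence of a base field}. Concretely, the passage from $A$ to $A_\sn$ and the decomposition $\tNcy(A)=\bigvee_{a\in A}\Ncy(A\ad{a};a)$ should be compatible with the normal-flatness stratification in a way that makes the blow-up square split up $\S$-equivariantly; the combinatorial analysis of $\Ncy$ along free $A/I$-sets (which is exactly what normal flatness provides, via Theorem \ref{rivalthm:free A-set}) is what forces the cartesian property. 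Since the whole construction $\tOmega^{T,r}$ is purely combinatorial in $A$ and independent of any coefficient ring, the field-free resolution Theorem \ref{thm:ros} is precisely the ingredient needed to upgrade the field case of \cite[Theorem 6.2]{chww-p} to the present generality.

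The main obstacle I anticipate is the blow-up step: one must verify that the combinatorial $\S$-space $\tOmega_{\cA}$ genuinely satisfies Mayer-Vietoris along the smooth normally-flat centers, and that smashing with an arbitrary $\S$-spectrum $T$ and passing to $C_r$-fixed points preserves homotopy cartesianness. Taking $C_r$-fixed points need not commute with homotopy pullbacks in general, so the argument must instead show the square of fixed-point spectra is cartesian directly—typically by exhibiting the relevant cofiber (the contribution of the exceptional locus) as matching on both sides, using the normal-flatness freeness to identify the graded pieces. Because the analogous claim over a field is \cite[Theorem 6.2]{chww-p} and the cited Section 6 constructions are field-independent, the expectation is that the existing proof transports essentially verbatim once Theorem \ref{thm:ros} replaces the field-dependent resolution theorem; the task is then to confirm that no hidden use of the base field enters the fixed-point and smash-product analysis.
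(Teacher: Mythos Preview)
The paper gives no proof of this theorem: it is simply quoted from \cite[Theorem~6.2]{chww-p}, and the opening line of Section~\ref{sec:dilatetc} already records that the results of \cite[Section~6]{chww-p} ``do not require the existence of a base field.'' Your proposal is built on the premise that the argument in \cite{chww-p} invokes a field-dependent resolution theorem which must now be replaced by Theorem~\ref{thm:ros}. That premise is mistaken. The presheaf $X\mapsto(\tOmega_{\cA(X)}\smsh T)^{C_r}$ is constructed purely from pointed monoids and an abstract $\S$-spectrum $T$; no coefficient ring $k$ ever enters, so there is no field hypothesis to remove and Theorem~\ref{thm:ros} plays no role here. (Theorem~\ref{thm:ros} and the normal-flatness material of Sections~\ref{sec:rivalfree}--\ref{sec:nf} are used in the paper for functors pulled back from $Sch/k$, as in Theorems~\ref{thm:descent} and~\ref{thm:bigdescent}, not for $\tOmega^{T,r}$.)

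Your proposed route is also strategically off target. The reason $\tOmega_A$ is defined as $|\tNcy(A_{\sn})|$ rather than $|\tNcy(A)|$ is precisely that seminormalization interacts well with the $cdh$ topology on $\cMpctf$: the proof in \cite[Section~6]{chww-p} exploits this directly to obtain Mayer--Vietoris for abstract blow-up squares, rather than resolving singularities and analyzing normally flat centers. Your sketch never uses the passage to $A_{\sn}$ in any essential way, which is a sign that the mechanism you have in mind is not the one actually at work. Moreover, the reduction you invoke in your second paragraph---from arbitrary abstract blow-ups to the normally flat ones arising in a resolution---is the content of \cite[Theorem~14.3]{chww-monoid} and the present Theorem~\ref{thm:descent}, machinery calibrated for presheaves of the form $F(X_k)$ with $F$ defined on $Sch/k$ and satisfying weak $cdh$ descent; it does not apply as stated to a presheaf defined intrinsically on $\cMpctf$.
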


Hesselholt and Madsen proved in \cite[Theorem 7.1]{HM} that
for any associative ring 
$\Lambda$ and monoid $A$, there is a 
natural equivalence of cyclotomic spectra,
\[
TH(\Lambda) \smsh |\Ncy(A)| \map{\simeq} TH(\Lambda[A]),
\]
where $TH$ is topological Hochschild homology. 
Combining this with Theorem \ref{MT2p}, we showed in
\cite[Corollary 6.6]{chww-p} that the dilated topological cyclic
homology $X\mapsto TC^\nu(X_k;p)^\fc$
satisfies $cdh$-descent on $\cMpctf$. Here is a more general statement.

\begin{thm} \label{thm:TCdilated}
(\cite[6.6]{chww-p})
Let $\fc = (c_1, c_2, \dotsc)$ be a sequence of integers with
$c_i\geq 2$ for all $i$.  For any associative ring $\Lambda$ and integer $\nu \ge 1$, the
spectrum-valued functor
\[
X \mapsto TC^\nu(X_\Lambda;p)^\fc
\]
satisfies $cdh$ descent on $\cMpctf$.
\end{thm}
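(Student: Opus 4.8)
The plan is to reduce the statement about the ring $\Lambda$ to the topological statement already established in Theorem \ref{MT2p}, using the Hesselholt--Madsen equivalence to import the monoid $A$ into the smash factor $|\Ncy(A)|$, and then invoke the dilation equivalence $|\Ncy(A)|^\fc \simeq |\tOmega_A|^\fc$. First I would recall that $TC^\nu(-;p)$ is built out of the fixed-point spectra $TH(-)^{C_r}$ via the homotopy limit defining $TC^\nu$ from the restriction and Frobenius maps. Using the Hesselholt--Madsen natural equivalence $TH(\Lambda)\smsh|\Ncy(A)|\map{\simeq}TH(\Lambda[A])$, I would identify, for $X=\MSpec(A)$ affine and each $r\mid$ the relevant index, the fixed-point spectrum $TH(X_\Lambda;p)^{C_r}$ with $(TH(\Lambda;p)\smsh|\Ncy(\cA)|)^{C_r}$, i.e.\ with the presheaf $\tOmega^{T,r}$ of Section \ref{sec:dilatetc} taken with $T=TH(\Lambda;p)$ as the input $\S$-spectrum. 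For a general monoid scheme $X$ in $\cMpctf$ one assembles these affine-local identifications by Zariski descent, since $TC^\nu$ and all its constituents satisfy Zariski (indeed Nisnevich) descent on schemes by Blumberg--Mandell.

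Next I would apply the dilation. Since $|\Ncy(A)|^\fc\map{\simeq}|\tOmega_A|^\fc$ is an $\S$-homotopy equivalence (from \cite[3.6]{chww-p}), smashing with the fixed input spectrum $T=TH(\Lambda;p)$ and passing to $C_r$-fixed points is compatible with this equivalence, so that after dilation the presheaf $X\mapsto TH(X_\Lambda;p)^{C_r,\fc}$ becomes identified with $\bigl(\tOmega^{T,r}\bigr)^\fc$. The key observation is that taking the homotopy colimit along the dilations $\theta_c$ commutes with the finite homotopy limits and fixed-point constructions that produce $TC^\nu$ from the $TH^{C_r}$; here $\nu$ is fixed and only finitely many $r$ are involved, so there are no convergence issues and the colimit passes through. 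This reduces the $cdh$-descent of $X\mapsto TC^\nu(X_\Lambda;p)^\fc$ to the $cdh$-descent of the dilated presheaves built from $\tOmega^{T,r}$.

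Finally I would invoke Theorem \ref{MT2p}: each $\bH_\zar(-,\tOmega^{T,r})$ satisfies $cdh$ descent on $\cMpctf$, and $cdh$ descent is preserved under the homotopy colimit defining $(-)^\fc$ (a sequential homotopy colimit of $cdh$-fibrant presheaves is again $cdh$-fibrant, as the $cdh$ topology on $\cMpctf$ is bounded by Definition \ref{defn:cdh} so that $\bH_\cdh$ commutes with filtered homotopy colimits). Assembling the finitely many $r$ via the homotopy limit that defines $TC^\nu$, and using that a finite homotopy limit of $cdh$-descent presheaves again satisfies $cdh$ descent, yields the theorem. The main obstacle I anticipate is the bookkeeping that identifies the $C_r$-fixed points of $TH(X_\Lambda;p)$ with the presheaf $\tOmega^{T,r}$ compatibly with the restriction and Frobenius maps that glue the $TH^{C_r}$ into $TC^\nu$ --- that is, checking that the Hesselholt--Madsen equivalence is not merely a levelwise equivalence but genuinely $\S$-equivariant and cyclotomic, so that the structure maps of the tower $\{TC^\nu\}_\nu$ are respected. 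Everything else, including commuting the dilation colimit past the fixed finite homotopy limit, is routine once this compatibility is in place.
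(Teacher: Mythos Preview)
Your proposal is correct and follows exactly the approach of the paper, which simply records that the proof of \cite[Theorem 6.5 and Corollary 6.6]{chww-p} goes through verbatim with $TH(k)$ replaced by $TH(\Lambda)$; you have unpacked precisely that argument. Your one anticipated obstacle is not an issue: as stated just before the theorem, the Hesselholt--Madsen result \cite[Theorem 7.1]{HM} gives an equivalence of \emph{cyclotomic} spectra, so the restriction and Frobenius maps are respected and the identification with $\tOmega^{T,r}$ for $T=TH(\Lambda)$ is compatible with the structure maps defining $TC^\nu$.
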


\begin{proof}
The proof of Theorem 6.5 and Corollary 6.6 in \cite{chww-p}
go through with $TH(k)$ replaced by $TH(\Lambda)$.
\end{proof}

\goodbreak
\section{Descent for Hochschild and cyclic homology}\label{sec:dilatehn}

We write $HH(\Lambda)$ for the Eilenberg-MacLane spectrum associated to the
absolute Hochschild homology of a ring $\Lambda$ (i.e., relative to the base
ring $\Z$) and similarly for cyclic and negative cyclic homology.
Since the Hochschild complex of $\Lambda$ is a cyclic complex,
$HH(\Lambda)$ is an $\S$-spectrum.

\begin{rem}
The Hochschild homology we are using in this section is the classical
version defined by the bar complex of an algebra.
There is also a derived version (see \cite[IV]{Goodwillie85}), and this is the one that coincides with the Hochschild homology of dg categories used in the rest of this paper.
The results of this section we need going forward (Corollaries \ref{cor:HNdilated} and \ref{cor:FKtorsion}) deal with $\Q$-algebras; for such algebras, the derived and classical definitions agree. 
\end{rem}

\begin{thm}\label{thm:HHdilated}
Let $\fc = (c_1, c_2, \dotsc)$ be a sequence of integers with
each $c_i\geq 2$.  For any $\Z$-flat associative ring $\Lambda$, 
the spectrum-valued functors
\[
X \mapsto HH(X_\Lambda)^\fc
\quad\textrm{ and }\quad
X \mapsto HC(X_\Lambda)^\fc
\]
satisfy $cdh$ descent on $\cMpctf$.
\end{thm}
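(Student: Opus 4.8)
The plan is to follow the template of the proof of Theorem~\ref{thm:TCdilated}, replacing the Hesselholt--Madsen decomposition of topological Hochschild homology by its linear analogue for ordinary Hochschild homology. The key input is that, for a $\Z$-flat associative ring $\Lambda$ and a pointed monoid $A$, there is a natural isomorphism of cyclic objects $\Ncy(\Lambda[A])\cong\Ncy(\Lambda)\oo_\Z\Z[\Ncy(A)]$, coming from the rearrangement $(\Lambda[A])^{\oo_\Z(n+1)}\cong\Lambda^{\oo_\Z(n+1)}\oo_\Z\Z[A^{\smsh(n+1)}]$ together with the identification $\Z[A^{\smsh(n+1)}]=\Z[\Ncy(A)_n]$. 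Since $\Z[\Ncy(A)]$ is degreewise $\Z$-free and $\Z$-flatness of $\Lambda$ guarantees that the classical bar complex computes $HH(\Lambda)$, realizing this isomorphism yields a natural $\S$-equivariant equivalence
\[
HH(\Lambda[A])\simeq HH(\Lambda)\smsh|\Ncy(A)|.
\]
This is the linear analogue of the equivalence $TH(\Lambda)\smsh|\Ncy(A)|\map{\simeq}TH(\Lambda[A])$ used above; being natural in $A$ and compatible with the dilation maps $\theta_c$ and with the structure sheaf, it identifies, Zariski-locally, the presheaf of $\S$-spectra $\cG\colon X\mapsto HH(\Lambda)\smsh|\Ncy(\cA(X))|$ with $X\mapsto HH(X_\Lambda)$.

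First I would treat Hochschild homology, following the proof of \cite[6.5, 6.6]{chww-p}---equivalently, that of Theorem~\ref{thm:TCdilated}---with $TH(\Lambda)$ replaced by $HH(\Lambda)$. Passing to the colimit over the dilation sequence $\fc$ and invoking the $\S$-homotopy equivalence $|\Ncy(A)|^\fc\map{\simeq}|\tOmega_A|^\fc$ of \cite[3.6]{chww-p}, one reduces $\cG^\fc$ to the presheaf $\tOmega^{HH(\Lambda),1}$ of Theorem~\ref{MT2p}, with $T=HH(\Lambda)$ regarded as an $\S$-spectrum and $r=1$, so that the $C_1$-fixed points are the identity. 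This case is in fact simpler than that of $TC^\nu$, since only $r=1$ occurs and no genuine fixed points enter. Because $HH$ satisfies Zariski descent and the dilation colimit preserves the Mayer--Vietoris property for Zariski squares, $cdh$ descent for $\cG^\fc$ then follows from Theorem~\ref{MT2p} exactly as $cdh$ descent for the dilated $TC^\nu$ does there. As forgetting the $\S$-action identifies $HH(X_\Lambda)^\fc$ with $\cG^\fc$ as a presheaf of spectra, this proves the statement for $HH$.

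For cyclic homology I would use the identification $HC(A)=HH(A)_{h\S}$ of cyclic homology with the homotopy $\S$-orbits of the cyclic Hochschild spectrum. Homotopy orbits commute with the filtered dilation colimit, so $HC(X_\Lambda)^\fc\simeq(\cG^\fc)_{h\S}$. Because the category of spectra is stable, $(-)_{h\S}$ is a homotopy colimit and hence sends homotopy cocartesian---equivalently homotopy cartesian---squares of spectra to homotopy cartesian squares, and sends the zero spectrum to itself; applied objectwise it therefore transports the Mayer--Vietoris property for $cdh$ squares from $\cG^\fc$, established above, to $(\cG^\fc)_{h\S}$. This yields $cdh$ descent for $HC(X_\Lambda)^\fc$.

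The main obstacle is the first step: producing the $\S$-equivariant decomposition $HH(\Lambda[A])\simeq HH(\Lambda)\smsh|\Ncy(A)|$ with enough naturality that it respects the full cyclic (not merely simplicial) structure, the structure sheaf of $X$, and the dilation maps, so that one may legitimately reduce $\cG^\fc$ to $\tOmega^{HH(\Lambda),1}$ via \cite[3.6]{chww-p} and apply Theorem~\ref{MT2p}. Here $\Z$-flatness is essential, both to ensure that the classical bar-complex Hochschild homology agrees with the smash-product formula and to keep the Künneth splitting of $\Ncy(\Lambda[A])$ underived. Once this equivariant bookkeeping is in place, the descent statements for both $HH$ and $HC$ follow formally, since forgetting the action and taking homotopy orbits each preserve the relevant homotopy cartesian squares.
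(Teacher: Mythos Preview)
Your argument for $HH$ is essentially identical to the paper's: both invoke the natural equivalence $HH(\Lambda)\smsh|\Ncy(A)|\simeq HH(\Lambda[A])$, use $|\Ncy|^\fc\simeq|\tOmega|^\fc$, and apply Theorem~\ref{MT2p} with $T=HH(\Lambda)$ and $r=1$.

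Your treatment of $HC$, however, takes a genuinely different route. The paper deduces the $HC$ statement from the $HH$ statement via the classical SBI long exact sequence: writing $\bH\bH$ and $\bH$ for the $cdh$-fibrant replacements of $HH^\fc$ and $HC^\fc$, it runs a five-lemma induction on $n$ along the diagram with rows $\cdots\to HC_{n-1}^\fc\to HH_n^\fc\to HC_n^\fc\to HC_{n-2}^\fc\to\cdots$, using that $HC_n^\fc=0$ for $n\ll0$ to start the induction. You instead identify $HC$ with $HH_{h\S}$ and observe that in spectra $(-)_{h\S}$, being a homotopy colimit, preserves homotopy (co)cartesian squares and commutes with the filtered dilation colimit. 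Your approach is more conceptual and avoids any diagram chase; the paper's approach is more elementary in that it only uses the SBI sequence and does not need to verify that the identification $HC(X_\Lambda)\simeq HH(X_\Lambda)_{h\S}$ globalizes to non-affine $X$ (which is true because $(-)_{h\S}$ commutes with the finite Zariski holims involved, but requires a word of justification). Either argument is valid.
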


\begin{proof}
By Theorem \ref{MT2p} with $T=HH(\Lambda)$ and $r=1$,
$\bH_\zar(-,HH(\Lambda)\smsh\tOmega)$ satisfies $cdh$ descent on $\cMpctf$.
Since $|\Ncy|^\fc\map{\simeq}|\tOmega|^\fc$, we see that
$\bH_\zar(-,HH(\Lambda)\smsh|\Ncy|)^\fc$ also satisfies $cdh$ descent.
This is equivalent to $HH(-_\Lambda)^\fc \simeq \bH_\zar(-,HH(-_\Lambda))^\fc$ 
because there is a natural weak equivalence of spectra
\[
HH(\Lambda)\smsh|\Ncy(A)| \simeq HH(\Lambda[A]).
\]
The $HH$ assertion now follows.
The $HC$ assertion follows from this, together with the SBI sequence
connecting cyclic homology and Hochschild homology (see \cite[9.6.11]{WH}).
In more detail, let $\bH\bH$ and $\bH$ denote the presheaves
$\bH\bH(X)=\bH_\cdh(X,HH(-_\Lambda)^\fc)$ and $\bH(X)=\bH_\cdh(X,HC(-_\Lambda)^\fc)$,
and abbreviate $HC_n(X_\Lambda)^\fc$ as $HC_n^\fc$.
By induction on $n$ using the diagram with exact rows
\[\xymatrix{
HC_{n-1}^\fc \ar[r] \ar[d]^{\cong}& HH_n(X_\Lambda)^\fc \ar[r] \ar[d]^{\cong} &
HC_n^\fc \ar[r] \ar[d] & HC_{n-2}^\fc \ar[r] \ar[d]^{\cong} &
\kern-3pt HH_{n-1}(X_\Lambda)^\fc \ar[d]^{\cong} \\
\pi_{n-1}\bH(X) \ar[r] & \pi_n\bH\bH(X) \ar[r] & \pi_n\bH(X) \ar[r]
& \pi_{n-2}\bH(X) \ar[r] & \pi_{n-1}\bH\bH(X),
}\]
we see that the middle map $\pi_nHC(X_\Lambda)^\fc\to\pi_n\bH(X)$ is an isomorphism
for all $n$. 
Hence $HC(X_\Lambda)^\fc\to\bH(X)$ is a homotopy equivalence for all $X$.
\end{proof}
\comment{
In more detail,
Goodwiliie's Theorem (see \cite[II.4.6]{Goodwillie85} or \cite[9.9.1]{WH})
says that if $\Lambda$ is a graded algebra, then the maps
$S: HC_p(\Lambda)^\Q \to HC_{p-2}(\Lambda)^\Q$ are zero.
}

\begin{cor}\label{cor:HNdilated}
Let $\fc = (c_1, c_2, \dotsc)$ be a sequence of integers with
each $c_i\geq 2$.  For any associative $\Q$-algebra $\Lambda$,
the spectrum-valued functor
$$
X \mapsto HN(X_\Lambda)^\fc
$$
satisfies $cdh$ descent on $\cMpctf$.
\end{cor}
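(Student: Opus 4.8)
The plan is to deduce the negative cyclic homology statement from the Hochschild and cyclic homology statements already proven in Theorem \ref{thm:HHdilated}, using the comparison between $HN$ and the tower built from $HH$ and $HC$. First I would note that since $\Lambda$ is a $\Q$-algebra it is automatically $\Z$-flat, so Theorem \ref{thm:HHdilated} applies and both $HH(-_\Lambda)^\fc$ and $HC(-_\Lambda)^\fc$ satisfy $cdh$ descent on $\cMpctf$. The key structural input is the standard long exact sequence relating negative cyclic homology to Hochschild and cyclic homology, namely the $IBS$-type sequence
\[
\cdots \to HN_{n}(\Lambda) \to HH_n(\Lambda) \to HC_{n-2}(\Lambda) \to HN_{n-1}(\Lambda) \to \cdots
\]
(see \cite[Section 5.1]{WH}), which holds functorially and hence passes to the presheaf level after applying the dilation colimit $(-)^\fc$ and the monoid-scheme realization.

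The main step is then formally parallel to the $HC$ portion of the proof of Theorem \ref{thm:HHdilated}. I would set $\bH\bH(X)=\bH_\cdh(X,HH(-_\Lambda)^\fc)$, $\bH(X)=\bH_\cdh(X,HC(-_\Lambda)^\fc)$, and $\bH\bN(X)=\bH_\cdh(X,HN(-_\Lambda)^\fc)$, and compare the long exact sequence in homotopy groups of the presheaves $HH(-_\Lambda)^\fc$, $HC(-_\Lambda)^\fc$, $HN(-_\Lambda)^\fc$ with the corresponding long exact sequence for their $cdh$-fibrant replacements. Because $\bH_\cdh(X,-)$ sends the homotopy fiber sequence defining $HN$ to a homotopy fiber sequence (fibrant replacement is exact), the two sequences map to each other compatibly. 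By Theorem \ref{thm:HHdilated} the vertical comparison maps on the $HH^\fc$ and $HC^\fc$ terms are isomorphisms for all $n$, so a five-lemma argument forces the middle map $\pi_n HN(X_\Lambda)^\fc \to \pi_n\bH\bN(X)$ to be an isomorphism as well. This shows $HN(X_\Lambda)^\fc \to \bH_\cdh(X,HN(-_\Lambda)^\fc)$ is a weak equivalence for every $X$, i.e.\ $HN(-_\Lambda)^\fc$ satisfies $cdh$ descent.

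The one genuine subtlety — the step I expect to require the most care — is making sure the long exact sequence survives all three operations (the sequential homotopy colimit $(-)^\fc$, the passage to presheaves on $\cMpctf$ via the $k$-realization, and the $cdh$-fibrant replacement $\bH_\cdh$) as maps of \emph{fiber sequences}, so that the five-lemma comparison is legitimate. The homotopy colimit $(-)^\fc$ is a filtered sequential colimit and hence exact on homotopy groups, preserving the long exact sequence; realization and the formation of $\bH_\cdh$ preserve homotopy fiber sequences of presheaves of spectra. One should also confirm that the relevant $HN$ arises as the homotopy limit of the tower whose layers are built from $HH$, equivalently that the defining fiber sequence $HN \to HH \to \Sigma^2 HC$ holds at the level of the dg-category (derived) theories used here; the remark preceding Theorem \ref{thm:HHdilated} guarantees that over $\Q$-algebras the classical and derived versions agree, so no discrepancy arises. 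Given these compatibilities, the conclusion is immediate from Theorem \ref{thm:HHdilated}.
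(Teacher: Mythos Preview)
Your argument has a genuine gap: the long exact sequence you invoke,
\[
\cdots \to HN_n(\Lambda) \to HH_n(\Lambda) \to HC_{n-2}(\Lambda) \to HN_{n-1}(\Lambda) \to \cdots,
\]
does not exist. Take $\Lambda=\Q$: then $HH_2(\Q)=0$, $HC_0(\Q)=\Q$, and $HN_1(\Q)=0$, so exactness at the $HC_0$ term would force $\Q=\ker(HC_0\to HN_1)=\im(HH_2\to HC_0)=0$. The two standard long exact sequences containing $HN$ are
\[
\cdots \to HN_{n+2}(\Lambda)\overset{S}{\to} HN_n(\Lambda)\to HH_n(\Lambda)\to HN_{n+1}(\Lambda)\to\cdots
\]
and
\[
\cdots \to HC_{n-1}(\Lambda)\to HN_n(\Lambda)\to HP_n(\Lambda)\to HC_{n-2}(\Lambda)\to\cdots.
\]
The first is useless for a five-lemma argument: it has $HN$ on both flanks, so the induction is circular, and since $HN$ is not bounded below it cannot be started. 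The second involves periodic cyclic homology $HP$, not $HH$.

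The paper's proof runs through this second sequence. The extra ingredient you are missing is that $HP$ already satisfies $cdh$ descent on $\cMpctf$ \emph{before} dilating: over $\Q$, periodic cyclic homology is nil-invariant (Goodwillie) and satisfies excision, hence satisfies weak $cdh$ descent on $\Sch/\Q$, and then Theorem~\ref{thm:bigdescent} (or Theorem~\ref{E_L} in the noncommutative case) yields $cdh$ descent for $HP(-_\Lambda)$ and therefore for $HP(-_\Lambda)^\fc$. Feeding this together with the $HC^\fc$ descent of Theorem~\ref{thm:HHdilated} into the $HC$/$HN$/$HP$ sequence gives the result by exactly the five-lemma comparison you sketched. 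In short, your strategy is correct, but you must trade $HH$ for $HP$ and supply the separate, undilated descent statement for $HP$; that step is where the $\Q$-algebra hypothesis is genuinely used.
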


\begin{proof}
As in the proof of Corollary 3.13 in \cite{chsw}, the un-dilated presheaf
$HP$ satisfies weak  $cdh$ descent on $Sch/k$. 
By Theorem \ref{thm:bigdescent}, $HP(-_\Lambda)$ satisfies $cdh$ descent 
on $\cMpctf$.  Therefore the dilated presheaf
$X\mapsto HP(X_\Lambda)^\fc$ also satisfies $cdh$ descent on $\cMpctf$.
Using the SBI sequence for $HN$ and $HP$, 
together with Theorem \ref{thm:HHdilated},
it follows that $X\mapsto HN(X_\Lambda)^\fc$ satisfies $cdh$ descent on $\cMpctf$. 
\end{proof}

\begin{cor}\label{cor:FKtorsion}
Let $k_0$ be commutative regular ring of finite Krull dimension all of whose residue
fields are infinite, and let $\Lambda$ be a flat $K$-regular $k_0$-algebra. Then 
\[ K(X_\Lambda)^\fc\otimes\Q \map{\simeq} KH(X_\Lambda)^\fc\otimes\Q.\]
\end{cor}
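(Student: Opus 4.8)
The plan is to identify the homotopy fiber of the map $K(X_\Lambda)^\fc\otimes\Q \to KH(X_\Lambda)^\fc\otimes\Q$ and to show it is objectwise contractible on $\cMpctf$. First I would invoke Proposition \ref{Qfiber}: since the square there is homotopy cartesian, the homotopy fiber of the top map $\cK_\Lambda(X)\otimes\Q \to \cKH_\Lambda(X)\otimes\Q$ agrees with the homotopy fiber of the bottom map $HN_{\Lambda\Q}(X) \to \bH_\cdh(X, HN_{\Lambda\Q})$, which is exactly $\cF_{HN_{\Lambda\Q}}(X)$ in the notation of Step~3 of the introduction. Both rationalization $-\otimes\Q$ and the dilation functor $(-)^\fc$ are (filtered, sequential) homotopy colimits, hence exact and compatible with one another; so the homotopy fiber of $K(X_\Lambda)^\fc\otimes\Q \to KH(X_\Lambda)^\fc\otimes\Q$ is $\cF_{HN_{\Lambda\Q}}^\fc(X)$. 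It therefore suffices to prove that $\cF_{HN_{\Lambda\Q}}^\fc$ is objectwise contractible.

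To this end I would apply $(-)^\fc$ to the fibration sequence of presheaves
\[
\cF_{HN_{\Lambda\Q}} \to HN_{\Lambda\Q} \to \bH_\cdh(-, HN_{\Lambda\Q}),
\]
which it preserves, and then argue that both of the resulting right-hand dilated presheaves satisfy $cdh$ descent. For $HN_{\Lambda\Q}^\fc = HN(-_\Lambda)^\fc$ this is Corollary \ref{cor:HNdilated}, noting that $\Lambda\otimes\Q$ is a $\Q$-algebra. For $\bH_\cdh(-, HN_{\Lambda\Q})^\fc$ I would use that the $cdh$ cd structure is bounded (Definition \ref{defn:cdh}), so that $cdh$ descent is equivalent to the Mayer--Vietoris property (Definition \ref{cdh-descent}); since a filtered homotopy colimit of homotopy cartesian squares is again homotopy cartesian, the dilation of a $cdh$-fibrant presheaf retains the Mayer--Vietoris property and hence satisfies $cdh$ descent.

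Finally, the canonical map $HN_{\Lambda\Q} \to \bH_\cdh(-, HN_{\Lambda\Q})$ is a $cdh$-local weak equivalence, and filtered homotopy colimits preserve $cdh$-local equivalences (these are detected on the $cdh$ sheaves of stable homotopy groups, which commute with filtered colimits). Thus $HN_{\Lambda\Q}^\fc \to \bH_\cdh(-, HN_{\Lambda\Q})^\fc$ is a $cdh$-local equivalence between two presheaves that satisfy $cdh$ descent, and such a map is necessarily an objectwise weak equivalence (its $cdh$-fibrant replacement is an objectwise equivalence, and each side is objectwise equivalent to its own fibrant replacement). Its homotopy fiber $\cF_{HN_{\Lambda\Q}}^\fc$ is then objectwise contractible, as desired, so $K(X_\Lambda)^\fc\otimes\Q \to KH(X_\Lambda)^\fc\otimes\Q$ is an equivalence for every $X$.

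The hard part will be the interchange of the dilation colimit with the $cdh$-fibrant replacement: precisely, that a filtered homotopy colimit of $cdh$-descent presheaves again satisfies $cdh$ descent, and that dilation preserves $cdh$-local equivalences. Both facts rest on the boundedness of the $cdh$ cd structure recorded in Definition \ref{defn:cdh}; once they are in hand, everything else is formal manipulation of fibration sequences together with the exactness of $-\otimes\Q$ and $(-)^\fc$.
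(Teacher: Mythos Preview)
Your proposal is correct and follows essentially the same approach as the paper, which simply says to ``combine Proposition~\ref{Qfiber} and Corollary~\ref{cor:HNdilated}.'' You have merely spelled out the step the paper leaves implicit: that after dilation the bottom map $HN_{\Lambda\Q}^\fc(X)\to \bH_\cdh(X,HN_{\Lambda\Q})^\fc$ is an equivalence, which requires comparing $\bH_\cdh(-,HN_{\Lambda\Q})^\fc$ with $\bH_\cdh(-,HN_{\Lambda\Q}^\fc)$ via the bounded cd-structure argument you give.
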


\begin{proof}
Combine Proposition \ref{Qfiber} and Corollary \ref{cor:HNdilated} 
(the latter applied using the $\Q$-algebra $\Lambda\oo\Q$).
\end{proof}

\goodbreak
\comment{
\smallskip\hrule\smallskip
{\bf Christian's proof sketch:}\\
Periodic cyclic homology is defined by a double complex whose columns are
copies of  the Hochschild complex. After rationalization, the Hodge
decomposition implies that the conditionally convergent spectral  sequence
\[
E^1_{p,q} = H_p(HH^\Q_q(X_k)) \Rightarrow HP^\Q_{p+q}(X_k)
\]
obtained from this unbounded double complex actually
 converges strongly. There is also a strongly convergent spectral
converging to $\pi_{p+q}\bH(X,HP^\Q(-_k))$
and a morphism between these spectral sequences. This morphism
is an isomorphism on the $E^1$ page, by Theorem \ref{thm:HHdilated},
so it induces isomorphisms $HP^\Q_{n}(X_k)\to\pi_n\bH(X,HP^\Q(-_k))$.
Thus $HP^\Q(X_k)\to\bH(X,HP^\Q(-_k))$ is a weak equivalence for all $X$,
as required.
\end{proof}
}

\goodbreak
\section{Main theorem}\label{sec:reduction}

\begin{lem}\label{lem:reducetolocal}
Let $X$ be a monoid scheme and $\fc = (c_1, c_2, \dots)$ a sequence of integers with $c_i \geq 2$ for all $i$.
Let $k$ be a commutative Noetherian ring of finite Krull dimension, and let 
$\Lambda$ be any associative $k$-algebra.
Suppose that for every prime ideal $\wp$ of $k$, the natural map
\[
\phi_{k_{\wp}}:\cK^\fc(X_{\Lambda_\wp}) 
\map{} \cKH^\fc(X_{\Lambda_\wp})
\]
is a weak equivalence. Then $\phi_{k}$ is a weak equivalence.
\end{lem}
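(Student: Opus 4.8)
The plan is to reduce the statement about the global ring $k$ to the statements about its localizations by exploiting the fact that $K$-theory and $KH$-theory commute with filtered colimits, together with a local-to-global principle detecting weak equivalences of spectra stalkwise. The map $\phi_k \colon \cK^\fc(X_\Lambda) \to \cKH^\fc(X_\Lambda)$ is a weak equivalence if and only if it induces isomorphisms on all homotopy groups; since the homotopy colimit defining $(-)^\fc$ commutes with taking homotopy groups, it suffices to show that $\varinjlim_{\theta_\fc} K_n(X_\Lambda) \to \varinjlim_{\theta_\fc} KH_n(X_\Lambda)$ is an isomorphism for each $n$, knowing this holds after replacing $\Lambda$ by $\Lambda_\wp = \Lambda \otimes_k k_\wp$ for every prime $\wp$.

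**First I would** observe that each of $K_n(X_\Lambda)$ and $KH_n(X_\Lambda)$ is a $k$-module, functorially in $k$, and that localization is exact and commutes with these functors: because $\Lambda_\wp = \Lambda \otimes_k k_\wp$ and $k_\wp = \varinjlim_{f \notin \wp} k[1/f]$ is a filtered colimit of localizations, and $K$-theory (hence $KH$-theory) commutes with filtered colimits of rings, we get $K_n(X_{\Lambda_\wp}) \cong K_n(X_\Lambda)_\wp$ and likewise for $KH$. The same holds after applying $(-)^\fc$, since the colimit over dilations is itself filtered and therefore commutes with localization. Thus the localization of the map $\phi_k$ at $\wp$ on homotopy groups is precisely the map $\phi_{k_\wp}$ on homotopy groups, which is an isomorphism by hypothesis.

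**The key step** is then the standard module-theoretic fact that a homomorphism of $k$-modules is an isomorphism if and only if it is an isomorphism after localizing at every prime (equivalently every maximal) ideal of $k$. Applying this to the map on each homotopy group $\varinjlim_{\theta_\fc} K_n(X_\Lambda) \to \varinjlim_{\theta_\fc} KH_n(X_\Lambda)$, whose localizations are isomorphisms by the previous paragraph, yields that the map itself is an isomorphism for every $n$. Hence $\phi_k$ is a weak equivalence.

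**The main obstacle** I anticipate is purely bookkeeping: carefully justifying that the relevant homotopy groups are genuinely $k$-modules in a way compatible with all the structure, and that the identification $K_n(X_{\Lambda_\wp}) \cong K_n(X_\Lambda)_\wp$ is natural enough to be compatible with the cyclotomic/Chern-character structures implicit in $KH$. The flatness of $\Lambda$ over $k$ (or the Noetherian, finite-dimensional hypotheses on $k$) ensures $\Lambda_\wp$ retains the properties needed for the earlier descent results to apply, but the proof of this lemma itself requires only the commutation of $K$- and $KH$-theory with the filtered colimits computing localization, so I would isolate that commutation statement cleanly and invoke the elementary local-global principle for $k$-modules to conclude. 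No resolution of singularities or descent machinery is needed here; this lemma is the formal reduction that feeds into the subsequent passage from infinite to finite residue fields.
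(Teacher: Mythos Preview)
Your argument has a genuine gap at the step where you assert
\[
K_n(X_{\Lambda_\wp}) \;\cong\; K_n(X_\Lambda)_\wp.
\]
Commutation of $K$-theory with filtered colimits gives only
\[
K_n(X_{\Lambda_\wp}) \;\cong\; \varinjlim_{f\notin\wp} K_n(X_{\Lambda[1/f]}),
\]
and this is \emph{not} the same as $\varinjlim_{f\notin\wp} K_n(X_\Lambda)[1/f] = K_n(X_\Lambda)_\wp$. Central localization of the ring is not the same as localization of the $K$-group: already for $k=\Lambda=\Z$ and $X$ a point one has $K_1(\Z[1/2])\cong \Z\oplus\Z/2$ while $K_1(\Z)\otimes\Z[1/2]=0$. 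So the hypothesis that $\phi_{k_\wp}$ is a weak equivalence does not tell you that the localization at $\wp$ of the homomorphism $\pi_n\phi_k$ is an isomorphism, and the module-theoretic local--global principle does not apply.

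What the hypothesis \emph{does} say is that the Zariski stalks of the presheaf $U\mapsto \pi_q\cF(U)$ on $\Spec(k)$ vanish, where $\cF(U)$ is the homotopy fiber of $\phi_{\cO(U)}$; equivalently, the associated Zariski sheaves $a_{\zar}\pi_q\cF$ are zero. To pass from this to $\cF(k)\simeq *$ one needs an additional input: the presheaves of spectra $U\mapsto \cK^\fc(X_{\Lambda\otimes_k\cO(U)})$ and $U\mapsto \cKH^\fc(X_{\Lambda\otimes_k\cO(U)})$ satisfy Zariski descent on $\Spec(k)$ (this is Thomason--Trobaugh for localizations at central elements), hence so does $\cF$, and then the Brown--Gersten/descent spectral sequence
\[
E_2^{p,q}=H_{\zar}^p(\Spec k,\,a_{\zar}\pi_{-q}\cF)\;\Longrightarrow\;\pi_{-p-q}\cF(k)
\]
collapses to zero. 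This is exactly where the Noetherian and finite Krull dimension hypotheses on $k$ are used---they guarantee convergence of the spectral sequence---so your remark that these hypotheses are not needed is another symptom of the gap.
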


\begin{proof}
Write $\cF$ for the presheaf on $S=\Spec(k)$ sending the open 
$U\subset S$ to the homotopy fiber of $\phi_{\cO(U)}$. 
The hypothesis implies that the Zariski sheaves 
$a_\zar\pi_q\cF$ associated to $\pi_q\cF$ vanish on $S$.
Because the source and target, viewed as functors of $U$,
both satisfy Zariski descent \cite[8.1]{TT}, so does $\cF$.
Since $k$ is Noetherian and of finite Krull dimension,
there is a spectral sequence converging to $\pi_*\cF(k)$
with $E_2^{p,q}=H_\zar^p(\Spec k, a_\zar\pi_{-q}\cF)=0$;
see \cite[Theorem V.10.11]{WK}. Hence $\cF(k)\simeq\ast$ and
$\phi_{k}$ is a weak homotopy equivalence.
\end{proof}

\begin{rem} An argument similar to that in the proof of 
Lemma \ref{lem:reducetolocal} shows that 
if $k$ is Noetherian of finite Krull dimension and $\phi_{k^h_{\wp}}$ is a 
weak equivalence for the henselization of every local ring of $k$,
then $\phi_{k}$ is a weak equivalence. One simply has to substitute 
the Nisnevich for the Zariski topology and use \cite[Remark V.10.11.1]{WK}.
\end{rem}

\begin{lem}\label{lem:tower}
Let $(R,\fm)$ be a commutative local ring with finite residue field. 
For each integer $l>0$  there is a finite \'etale extension 
$R \to R'$  of rank $l$ with $(R',\fm R')$ local.
\end{lem}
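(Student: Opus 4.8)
The plan is to produce, for each $l > 0$, a finite étale local extension $R \to R'$ of rank $l$ by building it from the residue field upward. The key observation is that the obstruction to being local is precisely that the fiber over the closed point $\fm$ might split into several points, so I want an extension whose special fiber is a single point, i.e.\ whose residue ring is again a field. First I would reduce to the case $l$ prime: if I can produce local finite étale extensions of any prime rank, then composing them (since a tower of local finite étale extensions is again finite étale and local, with rank multiplying) yields every $l$. So assume $l = p$ is prime.

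For the prime case, the natural approach is to lift a separable residue-field extension. The residue field $\kappa = R/\fm$ is finite, say with $q$ elements, and it admits a unique degree-$p$ extension $\kappa \subset \kappa'$, which is separable (all finite fields are perfect), generated by a root of some monic separable polynomial $\bar f \in \kappa[x]$ of degree $p$. I would lift $\bar f$ to a monic polynomial $f \in R[x]$ of degree $p$ and set $R' = R[x]/(f)$. Because $f$ is monic of degree $p$, $R'$ is a free $R$-module of rank $p$, hence finite and flat. To see it is étale I check that $f' $ is a unit in $R'$: since $\bar f$ is separable, $\bar f$ and $\bar f'$ are coprime in $\kappa[x]$, so $\bar f'(\bar x)$ is a unit in $\kappa' = R'/\fm R'$, and therefore $f'(x)$ is a unit in $R'$ by Nakayama, as $R'$ is $\fm R'$-adically local (being finite over the local ring $R$). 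Thus $R'$ is finite étale of rank $p$ over $R$.

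It remains to check that $(R', \fm R')$ is local. The ring $R'$ is finite over the local ring $R$, hence is a product of local rings corresponding to the maximal ideals of $R'$, which all lie over $\fm$; these correspond to the factors of $\bar f$ in $\kappa[x]$. But $\bar f$ is irreducible over $\kappa$ by construction (it is the minimal polynomial of a generator of $\kappa'$), so $R'/\fm R' = \kappa[x]/(\bar f) = \kappa'$ is a field. Hence $R'$ has a unique maximal ideal, namely $\fm R'$, and $(R', \fm R')$ is local of residue field $\kappa'$, as required.

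I expect the only genuine subtlety to be the verification that $R'$ is local, i.e.\ controlling the special fiber; everything else (freeness of rank $p$, the étale/unit-of-derivative check via Nakayama) is routine once the residue extension is chosen to be irreducible and separable. The clean way to package this is to emphasize that over a finite field there is a unique irreducible separable polynomial of each degree giving a local lift, and that iterating prime-rank extensions gives arbitrary $l$; the multiplicativity of both rank and the local-finite-étale property under composition is what makes the reduction to primes legitimate.
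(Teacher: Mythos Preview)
Your proof is correct and follows essentially the same approach as the paper: lift a separable residue-field extension by choosing a monic lift $f$ of an irreducible separable polynomial over the finite residue field, and check that $R' = R[x]/(f)$ is finite free, \'etale (via the unit condition on $f'$), and local (because $R'/\fm R'$ is a field). The paper does this directly for arbitrary $l$, since a finite field has a degree-$l$ separable extension for every $l$; your reduction to prime $l$ is harmless but unnecessary. One small slip: a finite algebra over a local ring is semilocal but need \emph{not} be a product of local rings in general (e.g.\ $\Z_{(5)}[i]$ has two maximal ideals but no nontrivial idempotent); fortunately your actual argument for locality only uses that $R'/\fm R'$ is a field, which is correct and is exactly what the paper does.
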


\begin{proof}
Let $\fm$ be the maximal ideal of $R$ and set  $k=R/\fm$. 
Because $k$ is finite, there is separable field extension $k'/k$ of
degree $l$. Pick a primitive element $\alpha \in k'$ for this
field extension, so that $k'=k(\alpha)$, and let $\bar{f} \in k[x]$ 
be the minimum polynomial of $\alpha$.
Let $f \in R[x]$ be a monic lift of $\bar{f}$ and set $R'=R[x]/f$. 
Then $R \to R'$ is a finite flat extension of rank $l$ and
$R'$ is local because $R'/\fm R' = k'$ is a field.  
Since the field extension is separable, $\bar{f}'(\alpha)\ne0$ 
and hence $f'(x)$ is a unit in $R'$.
This proves $R \to R'$ is an \'etale extension.
\end{proof}

Recall from the introduction that an associative ring $\Gamma$ is (right) regular if it is (right) Noetherian and every finitely generated
(right) module has finite projective dimension. 

\begin{lem}\label{lem:gammasep}
Let $\Gamma$ be an associative algebra over a commutative ring $k$ and $f\in k[x]$ a monic polynomial such that the derivative $df/dx$ is invertible in 
$k'=k[x]/\langle f\rangle$. If $\Gamma$ is regular, then so is $\Gamma'=\Gamma\otimes_kk'$.  
\end{lem}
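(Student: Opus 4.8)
The plan is to establish the two defining properties of a (right) regular ring for $\Gamma'$ separately, exploiting the fact that the hypothesis on $df/dx$ makes $k'=k[x]/\langle f\rangle$ an \'etale, hence \emph{separable}, $k$-algebra, and that $\Gamma'=\Gamma\otimes_k k'$ is then a separable ring extension of $\Gamma$ that is moreover finite free over $\Gamma$. Since $f$ is monic of some degree $l$, the ring $k'$ is free of rank $l$ over $k$, so $\Gamma'$ is free of rank $l$ as both a left and a right $\Gamma$-module; and since $k$ is central in $\Gamma$ and $k'$ is commutative, the image of $k'$ is central in $\Gamma'$, so $\Gamma\to\Gamma'$ is a ring homomorphism with $\Gamma'$ finite free on either side.

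For Noetherianity, I would argue as follows. The module $\Gamma'$ is finitely generated (indeed free) as a right $\Gamma$-module, and $\Gamma$ is right Noetherian, so $\Gamma'$ is a Noetherian right $\Gamma$-module. Any right ideal $J\subseteq\Gamma'$ is in particular a right $\Gamma$-submodule of $\Gamma'$ (restricting scalars along $\Gamma\to\Gamma'$), hence finitely generated over $\Gamma$ and \emph{a fortiori} finitely generated as a right ideal of $\Gamma'$. Thus $\Gamma'$ is right Noetherian.

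For finite projective dimension, let $M$ be a finitely generated right $\Gamma'$-module. Restricting scalars along $\Gamma\to\Gamma'$ and using that $\Gamma'$ is finite over $\Gamma$, the module $M$ is finitely generated over the regular ring $\Gamma$, so $n:=\operatorname{pd}_\Gamma M<\infty$. Since $\Gamma'$ is free, hence flat, over $\Gamma$ and base change carries projectives to projectives, applying $-\otimes_\Gamma\Gamma'$ to a length-$n$ projective $\Gamma$-resolution of $M$ yields a length-$n$ projective $\Gamma'$-resolution of $M\otimes_\Gamma\Gamma'$, so $\operatorname{pd}_{\Gamma'}(M\otimes_\Gamma\Gamma')\le n$. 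It remains to compare $M$ with $M\otimes_\Gamma\Gamma'$, and this is where separability enters: the separability idempotent $e\in\Gamma'\otimes_\Gamma\Gamma'$ (the image of a separability idempotent of $k'/k$, whose existence is exactly the invertibility of $df/dx$ in $k'$) provides a right $\Gamma'$-linear section $m\mapsto\sum m a_i\otimes b_i$ of the multiplication map $M\otimes_\Gamma\Gamma'\to M$, exhibiting $M$ as a direct summand of $M\otimes_\Gamma\Gamma'$ among right $\Gamma'$-modules. Since projective dimension cannot increase on passing to a direct summand, $\operatorname{pd}_{\Gamma'}M\le\operatorname{pd}_{\Gamma'}(M\otimes_\Gamma\Gamma')\le n<\infty$.

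The main obstacle is the separability claim itself: I must check that invertibility of $df/dx$ in $k'$ produces a genuine separability idempotent for $k'/k$, and that it base changes to one for $\Gamma'/\Gamma$ (using centrality of $k'$ and flatness), from which the $\Gamma'$-linear splitting is then formal. Everything else—the finiteness, Noetherianity, and resolution arguments—is routine once this separability structure is available. Both properties are verified for right modules, matching the paper's convention that ``regular'' means ``right regular''; the left-handed statement follows by the symmetric argument.
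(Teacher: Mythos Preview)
Your proof is correct and follows essentially the same route as the paper's: Noetherianity via finiteness of $\Gamma'$ over $\Gamma$, and finite projective dimension by (i) base-changing a finite projective $\Gamma$-resolution of $M|_\Gamma$ to a $\Gamma'$-resolution of $M\otimes_\Gamma\Gamma'$ and (ii) using the separability idempotent to exhibit $M$ as a $\Gamma'$-direct summand of $M\otimes_\Gamma\Gamma'$. The only cosmetic difference is that the paper writes $M\otimes_k k'$ where you write $M\otimes_\Gamma\Gamma'$, but since $\Gamma'=\Gamma\otimes_k k'$ these are canonically the same $\Gamma'$-module with the same multiplication map to $M$.
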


\begin{proof} 
First of all, $\Gamma'$ is  Noetherian since it is finite as a module over $\Gamma$, which is  Noetherian by assumption.
We must show every finitely generated
right $\Gamma'$-module $M$ has finite projective dimension.
This is clear for $M$ of the form $N\otimes_kk'$ for some
finitely generated $\Gamma$-module $N$, since 
$k\to k'$ is flat and we are assuming that $\Gamma$ is regular. If $M$ is any $\Gamma'$-module, write $M'=M\otimes_kk'$; the multiplication map $\mu:M'\to M$ is
a surjective homomorphism of $\Gamma'$-modules. On the other hand, because $k'$ is a separable $k$-algebra, there exists an idempotent $e=\sum_ix_i\otimes
y_i\in k'\otimes_kk'$such that  $\sum_ix_iy_i=1\in k'$ and such that 
$(x\otimes 1-1\otimes x)e=0$ for all $x\in k'$. One checks that the map $s:M\to M'$, $m\mapsto me$ is $\Gamma'$-linear and that $\mu s=1_M$. Thus $M$ is a
direct summand of $M'$, and a direct summand of a module of finite
projective dimension is itself of finite projective dimension.
\end{proof}

We are now ready to prove our main theorem.

\begin{thm}\label{thm:main}
Let $k_0$ be a commutative regular ring of finite Krull dimension,
$\Lambda$ a flat (not necessarily commutative) $k_0$-algebra, and assume that one of  the following three conditions
holds:
\begin{enumerate}
\item $\Lambda$ is regular,
\item $\Lambda$ is $K$-regular and all of the residue fields of $k_0$ are infinite, or 
\item 
$\Lambda$ is $K$-regular and for every prime $\fp\subset k_0$ whose residue field is finite
and every \'etale $(k_0)_\fp$-algebra $k'$, the ring $\Lambda\oo_{k_0} k'$ is $K$-regular.
\end{enumerate}
Let $X$ be a pctf monoid scheme of finite type, and
$\fc = (c_1, c_2, \dotsc )$ any sequence of integers with
$c_i\ge 2$ for all $i$.
Then the natural map
\[
K^\fc(X_\Lambda) \to KH^\fc (X_\Lambda)
\]
is a weak equivalence.
\end{thm}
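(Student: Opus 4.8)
The plan is to reduce to the case already handled by the earlier results of this paper, namely when $k_0$ has infinite residue fields (case (2)), and then to deploy the descent machinery of Sections~\ref{sec:KHdescent}--\ref{sec:dilatehn} to prove that case directly. First I would establish case (2). Since $\Lambda$ is $K$-regular, flat over $k_0$, and all residue fields of $k_0$ are infinite, Corollary~\ref{cor:khCdescent} applies to show that $\cKH_\Lambda$ satisfies $cdh$ descent and that $\bH_\cdh(X,\cK_\Lambda)\to\bH_\cdh(X,\cKH_\Lambda)$ is an equivalence. The strategy is then to show that the homotopy fiber $\cF_K$ of $K\to KH$ becomes contractible after dilation; that is, $\cF_K^\fc(X)\simeq\ast$ for all $X$ in $\cMpctf$. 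This is proved one coefficient system at a time: rationally, Corollary~\ref{cor:FKtorsion} already gives $K(X_\Lambda)^\fc\otimes\Q\map{\simeq}KH(X_\Lambda)^\fc\otimes\Q$; with $\Z/p^n$-coefficients, Proposition~\ref{pfiber} together with Theorem~\ref{thm:TCdilated} (dilated $TC^\nu$ satisfies $cdh$ descent, so the fiber terms dilate to nothing) controls the $p$-torsion. Assembling the rational and mod-$p^n$ statements as in the argument of \cite[Theorem 8.3]{chww-p} yields that $K^\fc(X_\Lambda)\to KH^\fc(X_\Lambda)$ is a weak equivalence whenever $k_0$ has infinite residue fields.

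Next I would reduce cases (1) and (3) to case (2). By Lemma~\ref{lem:reducetolocal}, applied with $k=k_0$, the map $\phi_{k_0}$ is a weak equivalence provided $\phi_{(k_0)_\fp}$ is for every prime $\fp$, so I may assume $k_0=(R,\fm)$ is a regular local ring. If the residue field $R/\fm$ is infinite we are in case (2) and done, so the remaining work is the local ring with finite residue field. Here I would pass to a finite \'etale tower: for each $l>0$ Lemma~\ref{lem:tower} produces a finite \'etale local extension $R\to R'$ of rank $l$ with residue field the degree-$l$ extension $k'$, and taking the colimit over such $l$ produces an extension whose residue field is infinite. The key point is that the base change $\Lambda'=\Lambda\otimes_R R'$ remains within the hypotheses: by Lemma~\ref{lem:gammasep} regularity of $\Lambda$ is preserved (handling case (1), where regularity implies $K$-regularity), and in case (3) the $K$-regularity of $\Lambda\otimes_{k_0}k'$ is assumed precisely so that the $K$-regular hypothesis of case (2) persists up the tower.

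The mechanism linking the two is a transfer/faithfully-flat-descent argument for the finite \'etale extension $R\to R'$. Since $\Lambda\to\Lambda'$ is finite \'etale of rank $l$, there are transfer maps on $K$-theory and on $KH$-theory whose composite with restriction is multiplication by $l$; applying this with the colimit over a cofinal sequence of degrees $l$ (chosen so that every integer eventually divides some $l$) lets me conclude that the fiber $\cF_K^\fc(X_\Lambda)$ is both divisible and controlled by its counterpart over $R'$, which vanishes by case (2). The main obstacle I anticipate is this last reduction from finite to infinite residue field: one must verify that the dilation colimit $(-)^\fc$, the base change $-\otimes_R R'$, and the homotopy fiber of $K\to KH$ all commute appropriately, and that the transfer argument genuinely forces vanishing rather than merely $l$-divisibility for each fixed $l$. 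Checking that $X_{\Lambda'}=(X_\Lambda)\otimes_R R'$ behaves correctly under these operations, and that the relevant theories commute with the filtered colimit along the \'etale tower (so that the infinite-residue-field case truly applies in the limit), is where the care is needed; the descent inputs from the earlier sections are the engine, but the faithfully-flat/transfer bookkeeping over the finite residue field is the delicate part.
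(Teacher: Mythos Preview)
Your outline matches the paper's proof in overall structure: establish case~(2) via the rational/mod-$p$ descent package (Corollary~\ref{cor:FKtorsion}, Proposition~\ref{pfiber}, Theorem~\ref{thm:TCdilated}, and \cite[Lemma~8.2, Theorem~8.3]{chww-p}), reduce to local $k_0$ via Lemma~\ref{lem:reducetolocal}, and handle the finite residue field case by an \'etale tower plus transfer. Two points deserve sharpening.

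First, the reduction to local $k_0$ requires that the hypotheses on $\Lambda$ survive localization at a prime of $k_0$; the paper invokes the fact that both regularity and $K$-regularity localize under central nonzerodivisors (\cite[Lemma~V.8.5]{WK}), which you should make explicit.

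Second, your transfer argument is not quite right as stated. The composite restriction-then-transfer along a degree-$l$ extension is multiplication by $l$, so the kernel of restriction is $l$-\emph{torsion}, not $l$-divisible; and a single tower with degrees $l_1\mid l_2\mid\cdots$ cofinal in all integers only yields that $\pi_*\cF^\fc(k_0)$ is a torsion group, which is not enough. The paper instead fixes a \emph{prime} $l$, builds via Lemma~\ref{lem:tower} a tower $k_0\subset k_1\subset\cdots$ with $[k_n:k_0]=l^n$, and takes $k'=\bigcup_n k_n$ (Noetherian by \cite[$0_{III}$~10.3.1.3]{EGAIII}); since $\pi_*\cF^\fc(k')=0$ by the infinite-residue-field case, the transfer argument shows $\pi_*\cF^\fc(k_0)$ is $l$-primary. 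Running this for every prime $l$ forces $\pi_*\cF^\fc(k_0)=0$. This one-prime-at-a-time trick is the missing ingredient in your sketch.
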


\begin{proof}
Both regularity and $K$-regularity localize under 
central nonzero divisors \cite[Lemma~V.8.5]{WK}.
It follows from this and from Lemma \ref{lem:reducetolocal} 
that we may assume that $k_0$ is local. 

Suppose first that the residue fields of $k_0$ are infinite. Let
$\cF_K$ be the homotopy fiber of the map $K\to KH$. We must show that the
homotopy groups of $\cF_K^\fc(X_\Lambda)$ vanish. By Corollary
\ref{cor:FKtorsion}
we have $\pi_*(\cF_K^\fc(X_\Lambda))\otimes\Q=0$. Hence the groups
$\pi_*(\cF_K^\fc(X_\Lambda))$ are torsion. We will be done if we show
that multiplication by any prime $p$ induces an isomorphism on
$\pi_*(\cF_K^\fc(X_\Lambda))$, or equivalently, that
$\pi_*(\cF_K^\fc(X_\Lambda)/p)=0$. This follows from 
Proposition \ref{pfiber} and Theorem \ref{thm:TCdilated}, using
\cite[Lemma 8.2]{chww-p}, as in the proof of \cite[Theorem 8.3]{chww-p}. This proves the result if (2) holds.

Now assume that the residue field $k_0/\fm$ of $k_0$ is finite and that either (1) or (3) hold. 
Let $\cF^\fc(k_0)$ be the fiber of the map $K^\fc(X_\Lambda)\to KH^\fc(X_\Lambda)$.
We must show that the homotopy groups of $\cF^\fc(k_0)$ vanish.
Let $l$ be a prime number. By Lemma \ref{lem:tower} there exists
a tower of finite, \'etale  extensions $k_0\subset k_1\subset\cdots$ of the form $k_{i+1}=k_i[x]/\langle f_i\rangle$ with $df_i/dx$ invertible in $k_{i+1}$,  
such that $(k_n,\fm k_n)$ is local and has rank $l^n$ over $k_0$, for all $n$. 

Set $k'=\bigcup_n k_n$; it is Noetherian by 
\cite[$0_{III}$ 10.3.1.3]{EGAIII}. Then $(k',\fm k')$ is 
regular local, with infinite residue field, and 
$k'\subseteq \Lambda\otimes_{k_0} k'$ is flat. If (3) holds, then 
$\Lambda\otimes_{k_0} k'$ is $K$-regular by hypothesis. If (1) holds, then by Lemma \ref{lem:gammasep}, 
$\Lambda_n=\Lambda\otimes_{k_0}k_n$ is regular and thus $K$-regular for each $n$, whence again we conclude that $\Lambda\otimes_{k_0} k'$ is $K$-regular. Hence 

\[
0=\pi_*\cF^\fc(k')=\colim_n\pi_*\cF^\fc(k_n).
\]
Since $\Lambda \to \Lambda\otimes_{k_0} k_n$ is finite and flat, 
there is a natural transfer map

$K(X_{\Lambda\oo_{k_0} k_n}) \to K(X_\Lambda)$ such that the composition
$K(X_\Lambda) \to K(X_{\Lambda\otimes_{k_0} k_n}) \to K(X_\Lambda)$ 
induces multiplication by $l^n$ on homotopy groups, for all $n$.
By naturality, $K^\fc$ also
admits such a transfer map. Likewise, $KH$-theory and hence
$KH^\fc$-theory have such transfer maps and they are compatible with the
map $K^\fc \to KH^\fc$.  We obtain a map $\cF^\fc(k_n)\to\cF^\fc(k_0)$ such
that the composition $\cF^\fc(k_0) \to \cF^\fc(k_n) \to \cF^\fc(k_0)$ induces
multiplication by $l^n$ on homotopy groups.  It follows that the
kernel of $\pi_*\cF^\fc(k_0) \to \pi_*\cF^\fc(k^\prime)$ is an $l$-primary
torsion group. Since this occurs for every prime $l$,
we must have $\pi_*(\cF^\fc(k_0))=0$.
\end{proof}

\goodbreak

Theorem \ref{thm:main} implies Theorem \ref{intro-Dilation}:

\begin{proof}[Proof of Theorem \ref{intro-Dilation}]
Cases (c) and (e) of Theorem \ref{intro-Dilation} are cases (1) and (3) of Theorem \ref{thm:main}, and case (d) is a special case of (e).
If $k$ is a regular commutative ring and $\Spec(k)$ is connected, 
then either $k$ is flat over a field $k_0$ or 
it is flat over the ring of integers. Hence it satisfies the hypothesis of Theorem \ref{thm:main}. If $k$ is an arbitrary commutative regular ring, then it is a finite product of
regular rings with connected $\Spec$. Thus case (a) of Theorem \ref{intro-Dilation} is proved. 

If $\Lambda$ is a commutative $C^\ast$-algebra, then it is flat over
the field $k_0 = \C$ and satisfies the $K$-regularity hypothesis by
\cite[Theorem 8.1]{CortinasThom}. Thus case (b) of 
Theorem \ref{intro-Dilation} also follows from Theorem \ref{thm:main}.
\end{proof}

\begin{rem}\label{rem:mainnoeth}
If in Theorem \ref{thm:main} we assume that $\Lambda$ is commutative
Noetherian and $K$-regular then every \'etale extension of $\Lambda$ 
is $K$-regular by van der Kallen's theorem \cite[Theorem 3.2]{vdK}.
In particular the assumption that $\Lambda\oo_{k_0}k'$ is $K$-regular 
for every \'etale $k_0$-algebra $k'$ is superfluous in this case.
\end{rem}

\begin{rem}\label{rem:altC*}
Theorems \ref{Main-intro} and \ref{intro-Dilation} for commutative
$C^\ast$-algebras can alternatively be derived from 
Gubeladze's main result of \cite{Gu08} 
using \cite[Theorem 7.7]{CortinasThom}.
\end{rem}

\bibliographystyle{plain}

\end{document}